\newcommand{\mathbbm}[1]{\text{\usefont{U}{bbm}{m}{n}#1}}
\newtheorem{theorem}{Theorem}[section]
\newtheorem{proposition}[theorem]{Proposition}
\newtheorem{proposition/definition}[theorem]{Proposition/Definition}
\newtheorem{corollary}[theorem]{Corollary}
\theoremstyle{definition}
\newtheorem{definition}[theorem]{Definition}
\newtheorem{example}[theorem]{Example}
\theoremstyle{remark}
\newcommand{\tp}{{\scriptscriptstyle\mathsf{T}}}
\newcommand{\lb}{\llbracket}
\newcommand{\rb}{\rrbracket}
\let\O\undefined
\let\S\undefined
\let\SO\undefined
\DeclareMathOperator{\Gr}{Gr}
\DeclareMathOperator{\V}{V}
\DeclareMathOperator{\Flag}{Flag}
\DeclareMathOperator{\GL}{GL}
\DeclareMathOperator{\O}{O}
\DeclareMathOperator{\S}{S}
\DeclareMathOperator{\SO}{SO}
\DeclareMathOperator{\tr}{tr}
\DeclareMathOperator{\Ad}{Ad}
\DeclareMathOperator{\im}{im}
\DeclareMathOperator{\rank}{rank}
\DeclareMathOperator{\diag}{diag}
\DeclareMathOperator{\spn}{span}
\DeclareMathOperator*{\argmax}{argmax}
\DeclareMathOperator*{\degree}{deg}
\DeclareFontFamily{U} {MnSymbolC}{}
\DeclareFontShape{U}{MnSymbolC}{m}{n}{
  <-6> MnSymbolC5
  <6-7> MnSymbolC6
  <7-8> MnSymbolC7
  <8-9> MnSymbolC8
  <9-10> MnSymbolC9
  <10-12> MnSymbolC10
  <12-> MnSymbolC12}{}
\DeclareFontShape{U}{MnSymbolC}{b}{n}{
  <-6> MnSymbolC-Bold5
  <6-7> MnSymbolC-Bold6
  <7-8> MnSymbolC-Bold7
  <8-9> MnSymbolC-Bold8
  <9-10> MnSymbolC-Bold9
  <10-12> MnSymbolC-Bold10
  <12-> MnSymbolC-Bold12}{}
\DeclareSymbolFont{MnSyC} {U} {MnSymbolC}{m}{n}
\DeclareMathSymbol{\minus}{\mathrel}{MnSyC}{16}
\DeclareMathSymbol{\plus}{\mathrel}{MnSyC}{20}
\newcommand{\pp}{{\plus\plus}}
\newcommand{\Rmnum}[1]{\expandafter\@slowromancap\Romannumeral #1@}
\begin{document}
\title{Optimization on flag manifolds}
\author[K.~Ye]{Ke~Ye}
\address{KLMM, Academy of Mathematics and Systems Science, Chinese Academy of Sciences,
Beijing 100190, China}
\email{keyk@amss.ac.cn}
\author[K.~S.-W.~Wong]{Ken Sze-Wai Wong}
\address{Department of Statistics,
University of Chicago, Chicago, IL 60637-1514.}
\email{kenwong@uchicago.edu}
\author[L.-H.~Lim]{Lek-Heng~Lim}
\address{Computational and Applied Mathematics Initiative, Department of Statistics,
University of Chicago, Chicago, IL 60637-1514.}
\email{lekheng@galton.uchicago.edu}

\begin{abstract}
A flag is a sequence of nested subspaces.  Flags are ubiquitous in numerical analysis, arising in finite elements, multigrid, spectral, and pseudospectral methods for numerical \textsc{pde}; they arise in the form of Krylov subspaces in matrix computations, and as multiresolution analysis in wavelets constructions. They are common in statistics too --- principal component, canonical correlation, and correspondence analyses may all be viewed as methods for extracting flags from a data set. The main goal of this article is to develop the tools needed for optimizing over a set of flags, which is a smooth manifold called the flag manifold, and it contains the Grassmannian as the simplest special case. We will derive closed-form analytic expressions for various differential geometric objects required for Riemannian optimization algorithms on the flag manifold; introducing various systems of extrinsic coordinates that allow us to parameterize points, metrics, tangent spaces, geodesics, distance, parallel transport, gradients, Hessians in terms of matrices and matrix operations; and thereby permitting us to formulate steepest descent, conjugate  gradient, and Newton algorithms on the flag manifold using only standard numerical linear algebra.

\end{abstract}

\subjclass[2010]{62H12, 14M15, 90C30, 62H10, 68T10}

\keywords{Flag manifold, 
generalized flag variety, 
linear subspaces, 
distances and metrics, 
manifold optimization, 
multivariate data analysis,
numerical analysis}

\maketitle

\section{Introduction}

Launched around 20 years ago in a classic article of Edelman, Arias, and Smith \cite{EAS},  Riemannian \emph{manifold optimization}  is now entrenched as a mainstay of optimization theory \cite{ABG2007,AMS2009,CDH2011,WY2013}. While studies of optimization algorithms on Riemannian manifolds predate \cite{EAS}, the distinguishing feature of  Edelman et al.'s approach is that their algorithms are built entirely and directly from standard algorithms in numerical linear algebra; in particular, they do not require numerical solutions of differential equations. For instance, the parallel transport of a vector in \cite{EAS} is not merely discussed in the abstract but may be explicitly computed in efficient and numerically stable ways via closed-form analytic expressions involving QR and singular value decompositions of various matrices.

The requirement that differential geometric quantities appearing in a manifold optimization algorithms have analytic expressions in terms of standard matrix decompositions limits the type of Riemannian manifolds that one may  consider. Aside from Euclidean spaces, we know of exactly three Riemannian manifolds \cite{AMS2009} on which one may define optimization algorithms in this manner:
\begin{enumerate}[\upshape (i)]
\item\label{it:V} Stiefel manifold $\V(k,n)$,
\item\label{it:Gr} Grassmann manifold $\Gr(k,n)$,
\item manifold of positive definite matrices $\mathbb{S}^n_{\pp}$.
\end{enumerate}
The main contribution of this article is to furnish a fourth: \emph{flag manifolds}.

A flag in a finite-dimensional vector space $\mathbb{V}$ over $\mathbb{R}$ is a nested sequence of linear subspaces $\{\mathbb{V}_i\}_{i=1}^d$ of $\mathbb{V}$, i.e.,
\[
\{0\} \subsetneq \mathbb{V}_1 \subsetneq \dots \subsetneq \mathbb{V}_d \subsetneq \mathbb{V}.
\]
For any increasing integer sequence of length $d$, $0 < n_1 < \dots < n_d < n$, the set of all flags $\{\mathbb{V}_i\}_{i=1}^d$  with $\dim (\mathbb{V}_i) = n_i$, $i=1, \dots, d$, is a smooth manifold called a \emph{flag manifold}, and denoted by $\Flag(n_1,\dots,n_d;\mathbb{V})$. This is  a generalization of the Grassmannian $\Gr(k, \mathbb{V})$ that parameterizes $k$-dimensional linear subspaces in $\mathbb{V}$  as flags of length one are just subspaces, i.e.,  $\Flag(k; \mathbb{V}) =\Gr(k, \mathbb{V})$. Flag manifolds,  sometimes also called flag varieties, were first studied by Ehresmann \cite{Ehresmann1934} and saw rapid development in 1950's \cite{borel1953b,borel1953a,Borel1953,chern1953}. They are now ubiquitous in many areas of pure mathematics, and, as we will discuss next, they are also ubiquitous in applied mathematics, just hidden in plain sight.

The optimization algorithms on Grassmann and Stiefel manifolds originally proposed in \cite{EAS} have found widespread applications: e.g., computer vision \cite{WWF,WSCG}, shape analysis \cite{WW2012,Schulz2014}, matrix computations \cite{B2013,LL2001}, subspace tracking \cite{BNR}, and numerous other areas --- unsurprising as subspaces and their orthonormal bases are ubiquitous in all areas of science and engineering. For the same reason, we expect optimization algorithms on flag manifolds to be similarly useful as flags are also ubiquitous --- any multilevel, multiresolution, or multiscale phenomena likely involve flags, whether implicitly or explicitly. We will discuss some examples from numerical analysis and statistics.

\subsection{Flags in numerical analysis}
In numerical analysis, flags naturally arise in finite elements, multigrid, spectral and pseudospectral methods, wavelets, iterative matrix computations, etc, in several ways.
\begin{example}[Refining mesh]
In multigrid, algebraic multigrid, finite element methods, we often consider a sequence of increasingly finer \emph{grids} or \emph{meshes} $G_1 \subseteq G_2
\subseteq G_3 \subseteq \cdots$ on the domain of interest $\Omega$. The vector space of real-valued functions
\[
\mathbb{V}_k \coloneqq \{ f : G_k \to \mathbb{R} \}
\]
gives us a flag $\mathbb{V}_1 \subseteq \mathbb{V}_2 \subseteq  \mathbb{V}_3 \subseteq \cdots$ of finite-dimensional vector spaces where $\dim \mathbb{V}_k = \# G_k$. The aforementioned numerical methods are essentially different ways of extracting approximate solutions of increasing accuracy  from the flag.
\end{example}

\begin{example}[Increasing order]
In  spectral and pseudospectral methods, we consider a class of functions of increasing complexity determined by an \emph{order} $d$, e.g., polynomial or trigonometric polynomial functions of degree $d$, on the domain of interest $\Omega$. The vector space
\[
\mathbb{V}_d \coloneqq \{ f : \Omega \to \mathbb{R} : \degree(f) \le d \}
\]
gives us a flag  $\mathbb{V}_1 \subseteq \mathbb{V}_2 \subseteq  \mathbb{V}_3 \subseteq \cdots$ as $d$ is increased. Again, these methods operate by extracting approximate solutions of increasing accuracy  from the flag.
\end{example}

\begin{example}[Cyclic subspaces]
Given $A \in \mathbb{R}^{n \times n}$ and $b \in \mathbb{R}^n$, the subspace
\[
K_{k}(A,b) \coloneqq \operatorname{span}\{b,Ab,\dots,A^{k-1} b\}
\]
is called 
the $k$th \emph{Krylov subspace}. The gist behind Krylov subspace methods in numerical linear algebra, whether for computing solutions to linear systems, least squares problems, eigenvalue problems, matrix functions, etc, are all based on finding a sequence of increasingly better approximations from the flag $K_0(A,b) \subseteq K_1(A,b) \subseteq \dots \subseteq K_k(A,b)$.
\end{example}

\begin{example}[Multiresolution]
A standard way to construct wavelets is to define a \emph{multiresolution analysis}, i.e., a sequence of subspaces $\mathbb{V}_{k+1} \subseteq \mathbb{V}_k$ defined by
\[
f(t) \in \mathbb{V}_k \quad \Leftrightarrow \quad f(t/2) \in \mathbb{V}_{k+1}.
\]
The convention in wavelet literature has the indexing in reverse order but this is a minor matter --- a nested of sequence of subspaces is a flag regardless of how the subspaces in the sequence  are labeled. So a multiresolution analysis is also a flag.
\end{example}

This is not an exhaustive list, flags also arise in numerical analysis in other ways, e.g., analysis of eigenvalue methods \cite{GM1986, JH2002}.

\subsection{Flags in statistics} Although not usually viewed in this manner, classical multivariate data analysis techniques \cite{MKB} may be cast as nested subspace-searching problems, i.e., constrained or unconstrained optimization problems on the flag manifold.

We let $\mathbbm{1}$ denote a vector of all ones (of appropriate dimension). We assume that our data set is given in the form of a sample-by-variables design matrix $X \in \mathbb{R}^{n \times p}$, $n \ge p$, which we call a data matrix for short. Let $\overline{x}=\frac{1}{n}X^\tp \mathbbm{1}\in\mathbb{R}^{p}$  be its sample mean and $S_X = (X-\mathbbm{1}\overline{x}^\tp)^\tp (X-\mathbbm{1}\overline{x}^\tp) \in \mathbb{R}^{p \times p}$ be its sample covariance. For another data matrix $Y \in \mathbb{R}^{n \times q}$, $S_{XY} = (X-\mathbbm{1}\overline{x}^\tp)^\tp (Y-\mathbbm{1}\overline{y}^\tp) = S_{YX}^\tp \in \mathbb{R}^{p \times q}$ denotes sample cross-covariance.

\begin{example}[Principal Component Analysis (PCA)]\label{eg:pca}
The $k$th \emph{principal subspace} of $X$ is $\im(Z_k)$, where $Z_k$ is the $p \times k$ orthonormal matrix given by
\begin{equation}\label{eq:pca}
Z_k = \argmax \{ \tr(Z^\tp S_X Z)  : Z \in \V(k,p)\}, \quad k =1,\dots, p.
\end{equation}
So  $\im(Z_k)$ is a $k$-dimensional linear subspace of $\mathbb{R}^p$ spanned by the orthonormal columns of $Z_k$. In an appropriate sense, the $k$th principal subspace captures the greatest variability in the data among all  $k$-dimensional subspaces of $\mathbb{R}^p$. In principal component analysis (PCA), the data points, i.e., columns of $X$, are often projected onto $\im(Z_k)$ with $k =2,3$ for visualization or with other small values of $k$ for dimension reduction. Clearly $\im(Z_k)$ is contained in $\im(Z_{k+1})$ and the flag
\[
\im(Z_1) \subseteq  \im(Z_2) \subseteq \dots \subseteq \im(Z_p)
\]
explains an increasing amount of variance in the data.
\end{example}
In \cite[Theorem~9]{PX}, it is shown how one may directly define PCA as an optimization problem on a flag manifold, a powerful perspective that in turn allows one to generalize and extend PCA in various manners. Nevertheless what is lacking in \cite{PX} is an algorithm for optimization on flag manifolds, a gap that our article will fill.

\begin{example}[Canonical Correlation Analysis (CCA)]\label{eg:cca}
The $k$th pair of \emph{canonical correlation loadings} $(a_k, b_k) \in \mathbb{R}^p \times \mathbb{R}^q$ is defined recursively by
\begin{multline}\label{eq:cca}
(a_k, b_k) = \argmax \{ a^\tp  S_{XY} b : a^\tp S_{X} a = b^\tp S_{Y} b = 1, \\
 a^\tp S_X a_j = a^\tp S_{XY} b_j = b^\tp S_{YX} a_j = b^\tp S_Y b_j = 0, \; j =1,\dots,k-1 \}.
\end{multline}
Let $A_k = [a_1,\dots,a_k] \in \mathbb{R}^{p \times k}$ and $B_k = [b_1,\dots,b_k] \in \mathbb{R}^{q \times k}$. Then the canonical correlation subspaces of $X$ and $Y$ are given by
\[
\im(A_1) \subseteq \dots \subseteq \im(A_p) \quad\text{and}\quad \im(B_1) \subseteq \dots \subseteq \im(B_q),
\]
which are flags in $\mathbb{R}^p$ and   $\mathbb{R}^q$ respectively. Collectively they capture how the  shared variance between the two data sets increases with $k$.
\end{example}

\begin{example}[Correspondence Analysis (CA)]\label{eg:ca}
Let $t = \mathbbm{1}^\tp X\mathbbm{1} \in \mathbb{R}$, $r = \frac{1}{t} X\mathbbm{1}  \in \mathbb{R}^n$, $c = \frac{1}{t} X^\tp \mathbbm{1} \in \mathbb{R}^p$ denote the total, row, and column weights of $X$ respectively and set $D_r = \frac{1}{t} \diag(r) \in \mathbb{R}^{n \times n}$, $D_c = \frac{1}{t} \diag (c) \in \mathbb{R}^{p \times p}$. For $k =1,\dots,p$, we seek matrices $U_k \in \mathbb{R}^{k \times n}$ and $V_k \in \mathbb{R}^{k \times p}$ such that
\begin{equation}\label{eq:ca}
(U_k, V_k) = \argmax   \{ \tr \bigl( U^\tp (\tfrac{1}{t} X - rc^\tp)  V \bigr) :  U^\tp D_r U = I = V^\tp D_c V \}.
\end{equation}
The solution
\[
\im(U_1) \subseteq \dots \subseteq \im(U_p) \quad\text{and}\quad \im(V_1) \subseteq \dots \subseteq \im(V_p)
\]
are flags in $\mathbb{R}^n$ and   $\mathbb{R}^p$ respectively and collectively they explain the increasing deviation from the independence of occurrence of two outcomes.
\end{example}

For  reasons such as  sensitivity of the higher-dimensional subspaces to noise in the data, in practice one relies on the first few subspaces in these flags to make various inference about the data. Nevertheless, we stress that the respective flags that solve \eqref{eq:pca}, \eqref{eq:cca}, \eqref{eq:ca} over all $k$ will paint a complete picture showing the full profile of how variance, shared variance, or deviation from independence vary across dimensions.

Apart from PCA, CCA, and CA, flags arise in other multivariate data analytic techniques \cite{MKB}, e.g., factor analysis (FA), linear discriminant analysis (LDA), multidimensional scaling (MDS), etc, in much the same manner. One notable example is the  independent subspace analysis proposed in \cite{NSP2006,NAP2008},  a generalization of independent component analysis.

\subsection{Prior work and our contributions} Some elements of optimization theory on flag manifolds have been considered in \cite{NSP2006}, although optimization is not its main focus and only analytic expressions for tangent spaces and gradients have been obtained. In particular, no actual algorithm appears in \cite{NSP2006} --- note that a  Riemannian steepest descent algorithm in the spirit of \cite{EAS} would at least require analytic expressions for  geodesics and, to the best of our knowledge,  they have never been derived; in fact prior to this article it is not even known if such expressions exist.

The main contribution of  our article is in providing all necessary ingredients for optimization algorithms on flag manifolds in full details, and from two different perspectives --- representing a flag manifold as (i) a homogeneous space, where a flag is represented as an equivalence class of matrices; and as (ii) a compact submanifold of $\mathbb{R}^{n \times n}$, where every flag is uniquely represented by a matrix. We will provide four systems of extrinsic coordinates for representing a flag manifold that arise from (i) and (ii) --- while modern differential geometry invariably adopts an intrinsic coordinate-free approach, we emphasize that such suitable extrinsic coordinate systems are indispensable for performing computations on manifolds.

In particular, the analytic expressions for various differential geometric objects and operations required for our optimization algorithms will rely on these coordinate systems. We will supply ready-to-use formulas and algorithms, rigorously proven but also made accessible to applied mathematicians and practitioners. For the readers' convenience, the following is a road map to the formulas and algorithms:
\begin{center}
\footnotesize
\begin{tabular}{l|l}
\textsc{object on flag manifold} & \textsc{results}\\\hline\hline
point & Propositions~\ref{prop:equivalent definition of flag manifold}, \ref{prop:Stiefel}, \ref{prop:representation of flags}, \ref{prop:reduced coord}\\\hline
tangent vector & Propositions~\ref{prop:tangent1}, \ref{prop:tangent2a}, \ref{prop:tangent2}, \ref{prop:tangent3}, Corollary~\ref{prop:tangent2b}\\\hline
metric & Propositions~\ref{prop:metric}, \ref{prop:metric2}, \ref{prop:metric3} \\\hline
geodesic & Propositions \ref{prop:geodesics1},  \ref{prop:geodesics2}, \ref{prop:geodesic3a}, \ref{prop:geodesics3}\\\hline
arclength & Corollary~\ref{cor:arc-length}, Proposition~\ref{prop:geodesic3a}\\\hline
geodesic distance & Proposition~\ref{prop:geodist}\\\hline
parallel transport & Propositions \ref{prop:pt}, \ref{prop:pt2}, \ref{prop:pt1}\\\hline
gradient & Proposition~\ref{prop:gradient}\\\hline
Hessian & Proposition~\ref{prop:Hessian}\\\hline
steepest descent & Algorithm~\ref{alg:sd}\\\hline
conjugate gradient  & Algorithm~\ref{alg:cg}
\end{tabular}
\end{center}

\subsection{Outline} We begin by reviewing basic materials about Lie groups, Lie algebras, homogeneous spaces, and Riemannian manifolds (Section~\ref{sec:basic}). We then proceed to describe the basic differential geometry of flag manifolds (Section~\ref{sec:flag}), develop four concrete matrix representations of flag manifolds, and derive closed-form analytic expressions for various differential geometric objects in terms of standard matrix operations (Sections~\ref{sec:homo}, \ref{sec:mfld}, \ref{sec:gH}). With these, standard nonlinear optimization algorithms can be ported to the flag manifold almost as an afterthought (Section~\ref{sec:optim}). We illustrate using two numerical experiments with steepest descent on the flag manifold (Section~\ref{sec:num}).

\section{Basic differential geometry of homogeneous spaces}\label{sec:basic}

We will need some rudimentary properties of homogeneous spaces not typically found in the manifold optimization literature, e.g., \cite{AMS2009,EAS}.  This section provides a self-contained review, kept to a bear minimum of just what we need later. We refer readers to standard references \cite{Helgason1978,KN1969,Boothby1975} for more information.

\subsection{Lie groups and Lie algebras}\label{sec:Lie}
Let $M$ be a smooth manifold and $T^\ast M$ be its cotangent bundle. A \emph{Riemannian metric} on $M$ is a smooth section $g: M \to T^\ast M \otimes T^\ast M$ such that $g_x \coloneqq g(x) \in T^\ast_xM \otimes T^\ast_x M$ is a positive definite symmetric bilinear form on the tangent space $T_xM$ for every $x\in M$. Intuitively, a Riemannian metric gives an inner product on $T_xM$ for every $x\in M$ and it varies smoothly with respect to $x\in M$. Let $G$ be a group and let $m:G\times G \to G$ be the multiplication map $m(a_1,a_2) = a_1a_2$ and $i: G \to G$ be the inversion map $i(a) = a^{-1}$. Then $G$ is a \emph{Lie group} if it is a smooth manifold and the group operations $m$ and $i$ are smooth maps. The tangent space $\mathfrak{g}$ of $G$ at the identity $e\in G$ is a \emph{Lie algebra}, i.e., a vector space equipped with a \emph{Lie bracket}, a bilinear map  $[\cdot, \cdot]: \mathfrak{g} \times \mathfrak{g} \to \mathfrak{g}$ satisfying $[X,Y] = -[Y,X]$ (skew-symmetry) and $[X,[Y,Z]] + [Z,[X,Y]] + [Y,[Z,X]] = 0$ (Jacobi identity). For example, if $G$ is the orthogonal group $\O(n)$ of all $n\times n$ real orthogonal matrices, then its Lie algebra  $\mathfrak{so}(n)$ is the vector space of all $n\times n$ real skew-symmetric matrices.

For a Lie group $G$, we may define the \emph{left} and \emph{right translation} maps $L_a,R_a:G \to G$ by $L_a(x) = m(a,x) = ax$ and $R_a(x) = m(x,a) = xa$. We say that a Riemannian metric $g$ on $G$ is  \emph{left invariant} if for all $a \in G$,
\[
g_{L_a(x)} \bigl((dL_a)_x (X), (dL_a)_x (Y)\bigr) = g_x(X,Y);
\]
\emph{right invariant} if for all $b \in G$,
\[
g_{R_b(x)} \bigl((dR_b)_x (X), (dR_b)_x (Y)\bigr) = g_x(X,Y);
\]
and \emph{bi-invariant} if for all $a,b\in G$,
\[
g_{R_b \circ L_a(x)} \bigl( (d (R_b \circ L_a))_x (X), (d (R_b \circ L_a))_x (Y)\bigr) = g_x(X,Y)
\]
over all $X,Y\in T_xM$.

\subsection{Homogeneous spaces}
We now recall some basic definitions and facts about homogeneous spaces. Throughout this article, we will use double brackets $\lb x\rb$ to denote the equivalence class of $x$.
\begin{definition}
Let $G$ be a Lie group acting on a smooth manifold $M$ via $\varphi: G\times M \to M$. If the action $\varphi$ is smooth and transitive, i.e.,  for any $x,y\in M$, there is some $a\in G$ such that $\varphi(a,x) = y$, then $M$ is called a \emph{homogeneous space} of the Lie group $G$.
\end{definition}
For a point $x\in M$, the subgroup $G_x =\{a\in G: \varphi(a,x) = x\}$ is called the \emph{isotropy group} of $x$. We write $G/G_x$ for the quotient group of $G$ by $G_x$ and denote by $\lb a \rb\in G/G_x$ the \emph{coset} (or equivalence class) of $a\in G$. Since $G$ acts on $M$ transitively, we see that there is  a one-to-one correspondence $F$ between $G/G_{x}$ and $M$ given by
\[
F: G/G_x \to M,\quad F(\lb a\rb) = \varphi(a,x),
\] 
for any $x\in M$. In fact, $F$ defines a diffeomorphism between the two smooth manifolds, which is the content of the following theorem \cite[Theorems~9.2 and 9.3]{Boothby1975}.

\begin{theorem}\label{thm:homogeneous structure}
Let $G$ be a Lie group acting on a smooth manifold $M$. For any $x \in M$, there exists a unique smooth structure on $G/G_x$ such that the action 
\[
\psi: G \times G/G_x \to G, \quad \psi(a, \lb a' \rb) = \lb aa' \rb 
\]
is smooth. Moreover, the map $F: G/G_x \to M$ sending $\lb a \rb$ to $\varphi(a,x)$ is a $G$-equivariant diffeomorphism, i.e., $F$ is a diffeomorphism such that $F\bigl(\psi(a, \lb a' \rb)\bigr) = \varphi \bigl(a, F(\lb a' \rb)\bigr)$.
\end{theorem}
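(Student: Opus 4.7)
The plan is to reduce the theorem to the standard quotient manifold theorem applied to a closed Lie subgroup. I would begin by considering the smooth orbit map $\varphi_x : G \to M$, $a \mapsto \varphi(a,x)$, and observing that $G_x = \varphi_x^{-1}(x)$ is closed in $G$ as the preimage of a point under a continuous map. By Cartan's closed subgroup theorem, $G_x$ is therefore an embedded Lie subgroup, and the standard quotient construction endows $G/G_x$ with a unique smooth manifold structure for which the canonical projection $\pi : G \to G/G_x$ is a smooth surjective submersion; uniqueness of the smooth structure is part of the quotient manifold theorem.

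With this smooth structure in hand, smoothness of the induced action $\psi: G \times G/G_x \to G/G_x$ is a formal consequence of the universal property of the submersion $\mathrm{id}_G \times \pi$: the smooth composite $\pi \circ m : G \times G \to G/G_x$ is constant along fibers of $\mathrm{id}_G \times \pi$, hence descends to a (uniquely determined) smooth map, which is precisely $\psi$.

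I would then define $F: G/G_x \to M$ by $F(\lb a \rb) = \varphi_x(a)$. Well-definedness and injectivity on cosets are both immediate from the definition $G_x = \varphi_x^{-1}(x)$, while surjectivity is precisely the transitivity hypothesis. The equivariance identity $F(\psi(a, \lb a'\rb)) = \varphi(a, F(\lb a' \rb))$ unpacks directly from the definition of a group action. Smoothness of $F$ follows from the smoothness of $\varphi_x = F \circ \pi$ combined once more with the fact that $\pi$ is a submersion.

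The heart of the argument, and the main obstacle, is showing that $F$ is a diffeomorphism, for which it suffices to verify that $F$ is a local diffeomorphism. Using the equivariance together with the fact that $\psi(a, -)$ and $\varphi(a, -)$ are diffeomorphisms for every $a \in G$, I only need this at a single point, say $\lb e \rb$. Here I would exploit transitivity to show that $\varphi_x$ has constant rank: the identity $\varphi_x \circ L_a = \varphi(a,-) \circ \varphi_x$ expresses rank equality at $a$ and at $e$ via composition with diffeomorphisms. A constant-rank surjection between manifolds of finite dimension is necessarily a submersion (otherwise its image would have measure zero in $M$), so $(d\varphi_x)_e$ is surjective with kernel exactly $\Lie(G_x)$, matching the kernel of $d\pi_e$. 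Consequently the induced map $dF_{\lb e \rb}: T_{\lb e \rb}(G/G_x) \to T_x M$ is a linear isomorphism, and the inverse function theorem promotes $F$ to a local, and hence global, diffeomorphism.
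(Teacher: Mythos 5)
The paper itself gives no proof of this theorem --- it is quoted verbatim from Boothby (Theorems~9.2 and 9.3 of \cite{Boothby1975}) --- so there is no internal argument to compare against; your proposal is the standard textbook proof and it is essentially sound. The chain closedness of $G_x$ $\Rightarrow$ Cartan's closed subgroup theorem $\Rightarrow$ quotient manifold theorem for $\pi\colon G\to G/G_x$, the descent of $\pi\circ m$ through the surjective submersion $\mathrm{id}_G\times\pi$ to get smoothness of $\psi$, and the homogeneity/constant-rank argument making the orbit map $\varphi_x$ a submersion so that $dF_{\lb e\rb}$ is an isomorphism and equivariance propagates this to all points, is exactly how the cited sources argue. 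Three small refinements you should make explicit. First, the measure-zero step (``a constant-rank surjection is a submersion'') uses second countability of $G$, i.e.\ countably many constant-rank charts; this is part of the standing conventions but is where the argument would fail without them. Second, ``kernel exactly $\Lie(G_x)$'' deserves a sentence: the constant rank theorem makes the fiber $G_x=\varphi_x^{-1}(x)$ an embedded submanifold of dimension $\dim G-\dim M$, which together with $\Lie(G_x)\subseteq\ker(d\varphi_x)_e$ forces equality by dimension count. Third, the uniqueness asserted in the theorem is uniqueness among smooth structures making $\psi$ smooth, whereas the quotient manifold theorem gives uniqueness among structures making $\pi$ a submersion; the bridge is easy but should be stated --- if $\mathcal{A}'$ is any smooth structure on $G/G_x$ for which $\psi$ is smooth, then $G$ acts smoothly and transitively on $(G/G_x,\mathcal{A}')$ with isotropy $G_x$ at $\lb e\rb$, so your own diffeomorphism argument applied with $M=(G/G_x,\mathcal{A}')$ shows that the identity map from the quotient structure to $\mathcal{A}'$ is a diffeomorphism. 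With these points added, the proof is complete.
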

The Grassmannian $\Gr(k,n)$ of $k$-dimensional subspaces in $\mathbb{R}^n$ is probably the best known example of a homogeneous space in manifold optimization. Indeed, $\O(n)$ acts transitively on $\Gr(k,n)$ and as any $k$-dimensional subspace $\mathbb{W} \subseteq \mathbb{R}^n$ has isotropy group  isomorphic to $\O(k)\times \O(n-k)$, we obtain the well-known characterization of Grassmannian
\[
\Gr(k,n) \cong \O(n)/\bigl(\O(k)\times \O(n-k)\bigr)
\]
that is crucial for manifold optimization. Throughout this article `$\cong$' will mean diffeomorphism.

Let $G$ be a Lie group and $M$ a homogeneous space of $G$ with action $\varphi: G\times M \to M$. Fix any $x\in M$ and let $H$ denote its isotropy group. By Theorem~\ref{thm:homogeneous structure} we may identify $M$ with $G/H$.  The left translation map in Section~\ref{sec:Lie} may be extended to this setting as $L_a : M \to M$, $L_a(y) =\varphi(a,y)$ for any  $a \in G$. In particular, if $a\in H$, then $L_a(x) = x$, and we have a linear isomorphism
\[
(dL_a)_x : T_xM \to T_xM.
\]
Let $g: M \to T^\ast M \otimes T^\ast M$ be a Riemannian metric on $M$. We say that $g$ is $G$-\emph{invariant} if for every $y\in M$ and $a\in G$, we have
\[
g_{L_a(y)}\bigl((dL_a)_y(X), (dL_a)_y(Y)\bigr) = g_y (X,Y) \quad \text{for all }  X,Y\in T_yM.
\]

As $M  = G/H$, we have $T_{\lb e \rb} M  = \mathfrak{g}/\mathfrak{h}$ where $\mathfrak{g}$ and $\mathfrak{h}$ are the Lie algebras of $G$ and $H$ respectively. Here $e\in G$ is the identity element. This allows us to define the \emph{adjoint representation}
$\Ad_H: H \to \GL(\mathfrak{g}/\mathfrak{h})$, $a \mapsto d(L_a \circ R_{a^{-1}})_{\lb e \rb}$. In other words, for any $a \in H$ and $X\in \mathfrak{g}/\mathfrak{h}$,
\[
\Ad_H (a) (X) = d (L_a\circ R_{a^{-1}} )_{\lb e \rb}(X).
\]
An inner product $\eta$ on the vector space $\mathfrak{g}/\mathfrak{h}$ is said to be \emph{$\Ad_H$-invariant} if for every $a\in H$,
\[
\eta (\Ad_H(a)(X),\Ad_H(a)(Y)) = \eta (X,Y) \quad \text{for all } X,Y\in \mathfrak{g}/\mathfrak{h}.
\]

We state an important result about their existence and construction \cite[Proposition 3.16]{CE2008}.

\begin{proposition}\label{prop:metric inner product correspondence}
Let $G$ be a connected Lie group and  $H$ a closed Lie subgroup with Lie algebras $\mathfrak{g}$ and $\mathfrak{h}$ respectively. If there is a subspace $\mathfrak{m}$ of $\mathfrak{g}$ such that $\mathfrak{g} = \mathfrak{m} \oplus \mathfrak{h}$ and $\Ad_H (\mathfrak{m}) \subseteq \mathfrak{m}$, then there is a one-to-one correspondence between $G$-invariant metrics on $M = G/H$ and $\Ad_H$-invariant inner products on  $\mathfrak{m}$.
\end{proposition}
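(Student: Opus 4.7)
The plan is to establish the bijection by constructing explicit maps in both directions and verifying they are mutual inverses. The reductive decomposition $\mathfrak{g} = \mathfrak{m} \oplus \mathfrak{h}$ gives a canonical identification of $\mathfrak{m}$ with $T_{\lb e\rb}M = \mathfrak{g}/\mathfrak{h}$, and this identification is the geometric hinge of the proof: a tangent vector at the basepoint is encoded by an element of $\mathfrak{m}$, and the differential $(dL_a)_{\lb e\rb}$ of left translation by $a\in H$, when read through this identification, coincides with $\Ad_H(a)$ acting on $\mathfrak{m}$ (since $\Ad_H(a) = d(L_a\circ R_{a^{-1}})_{\lb e\rb}$ and $R_{a^{-1}}$ acts trivially on the coset $\lb e\rb$).

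First I would construct the forward map. Given a $G$-invariant metric $g$ on $M$, define $\eta := g_{\lb e\rb}$, regarded as an inner product on $\mathfrak{m}$ via the identification above. Its $\Ad_H$-invariance follows by specializing the $G$-invariance condition $g_{L_a(\lb e\rb)}\bigl((dL_a)_{\lb e\rb}X,(dL_a)_{\lb e\rb}Y\bigr) = g_{\lb e\rb}(X,Y)$ to $a\in H$, so that $L_a$ fixes $\lb e\rb$ and $(dL_a)_{\lb e\rb}$ is exactly $\Ad_H(a)$ on $\mathfrak{m}$.

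Next I would construct the reverse map. Given an $\Ad_H$-invariant inner product $\eta$ on $\mathfrak{m}$, define for each $\lb a\rb \in M$ and $X,Y \in T_{\lb a\rb}M$,
\[
g_{\lb a\rb}(X,Y) := \eta\bigl((dL_{a^{-1}})_{\lb a\rb}(X),\,(dL_{a^{-1}})_{\lb a\rb}(Y)\bigr).
\]
The $G$-invariance is then immediate from the chain rule applied to $L_b\circ L_{a^{-1}} = L_{ba^{-1}}$, and smoothness reduces to smoothness of the action together with smoothness of a local section of the principal bundle $G\to G/H$ used to write $a$ as a smooth function of $\lb a\rb$ locally.

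The main obstacle, and the place where the hypothesis $\Ad_H(\mathfrak{m})\subseteq \mathfrak{m}$ is essential, is showing that $g_{\lb a\rb}$ is well-defined, i.e., independent of the representative $a$ of the coset. If $a' = ah$ with $h\in H$, then $L_{(a')^{-1}} = L_{h^{-1}}\circ L_{a^{-1}}$, and taking differentials one gets
\[
(dL_{(a')^{-1}})_{\lb a\rb}(X) = \Ad_H(h^{-1})\bigl((dL_{a^{-1}})_{\lb a\rb}(X)\bigr),
\]
after identifying $T_{\lb e\rb}M$ with $\mathfrak{m}$; here the hypothesis $\Ad_H(\mathfrak{m})\subseteq\mathfrak{m}$ ensures that the right-hand side really lands in $\mathfrak{m}$ (so that $\eta$ can be evaluated on it), and the $\Ad_H$-invariance of $\eta$ ensures that the resulting value is unchanged. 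Finally, I would check that the two maps are mutual inverses: evaluating the reverse construction at $\lb e\rb$ recovers $\eta$ trivially, and conversely reconstructing a metric from its restriction at $\lb e\rb$ gives back the original metric by $G$-invariance applied to $L_{a^{-1}}$. Connectedness of $G$ is invoked only insofar as it guarantees that the $G$-action has a single orbit and that these pointwise formulas extend consistently over all of $M$.
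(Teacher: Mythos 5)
Your argument is correct. Note that the paper itself contains no proof of this proposition: it is quoted directly from \cite[Proposition~3.16]{CE2008}, so there is no internal proof to compare against, and what you have written is precisely the standard argument behind that citation. The two substantive points are handled properly: (1) the identification of the isotropy action $(dL_a)_{\lb e \rb}$, $a\in H$, with $\Ad_H(a)$ restricted to $\mathfrak{m}$ under $T_{\lb e\rb}M \cong \mathfrak{g}/\mathfrak{h} \cong \mathfrak{m}$ --- this is exactly where the hypothesis $\Ad_H(\mathfrak{m})\subseteq\mathfrak{m}$ enters, since it guarantees that the map induced on $\mathfrak{g}/\mathfrak{h}$ by $\Ad_H(a)$ corresponds to an operator on $\mathfrak{m}$ itself; and (2) the well-definedness of the pulled-back metric under a change of coset representative $a\mapsto ah$, which combines this identification with the $\Ad_H$-invariance of $\eta$. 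Two minor quibbles that do not affect validity: transitivity of $G$ on $G/H$ is automatic, so connectedness of $G$ is not what makes your pointwise formulas consistent (it is simply part of the hypotheses carried over from the cited source rather than something your construction uses); and the smoothness step deserves exactly the one line you gave it --- closedness of $H$ makes $G\to G/H$ a principal $H$-bundle admitting local smooth sections, so the defining formula for $g_{\lb a\rb}$ is locally a composition of smooth maps.
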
 
Proposition~\ref{prop:metric inner product correspondence} says that if $\mathfrak{h}\subseteq \mathfrak{g}$ admits a complement $\mathfrak{m}$, then we may obtain a $G$-invariant metric $g$ on $M$ by an $\Ad_H$-invariant  inner product on $\mathfrak{m}$. Moreover, we may identify $T_xM$ with $\mathfrak{m}$, implying that the metric $g$ on $M$ is essentially determined by $g_x$ at a single arbitrary point $x \in M$. 

If in addition $G$ is simple and compact, then $G$ admits the unique bi-invariant metric  called the \emph{canonical metric} on $M$ and $(M,g)$ is called a \emph{normal homogeneous space}.
\begin{proposition}\label{prop:nhs}
If $G$ is a compact Lie group, then $G$ admits a bi-invariant metric and this metric induces a $G$-invariant metric $g$ on $M = G/H$ for any closed subgroup $H\subseteq G$.
\end{proposition}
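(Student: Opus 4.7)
The plan is to proceed in two steps: first construct a bi-invariant Riemannian metric on $G$ itself by Haar-measure averaging, and then use the resulting $\Ad(H)$-invariant orthogonal decomposition of $\mathfrak{g}$ to invoke Proposition~\ref{prop:metric inner product correspondence} on $M = G/H$.

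For the first step, I would start with any inner product $\langle\cdot,\cdot\rangle_0$ on $\mathfrak{g} = T_e G$ and use compactness to normalize the Haar measure $\mu$ on $G$ to total mass one. Define the averaged form
\[
\langle X, Y \rangle \coloneqq \int_G \langle \Ad(a)(X), \Ad(a)(Y)\rangle_0 \, d\mu(a), \qquad X,Y \in \mathfrak{g}.
\]
The integral converges by compactness, and positive definiteness is preserved under integration of a continuous family of positive definite forms. Left-invariance of $\mu$ and the change of variables $a \mapsto ba$ show that $\langle\cdot,\cdot\rangle$ is $\Ad(G)$-invariant. I would then push this inner product to every tangent space by left translation, defining $g_a(X,Y) \coloneqq \langle (dL_{a^{-1}})_a X, (dL_{a^{-1}})_a Y\rangle$; this is manifestly left-invariant, and its right-invariance follows from the $\Ad(G)$-invariance of $\langle\cdot,\cdot\rangle$ by writing a right translation as a conjugation composed with a left translation.

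For the second step, since $H$ is closed in the compact group $G$ it is itself compact, and the $\Ad(G)$-invariant inner product on $\mathfrak{g}$ restricts in particular to an $\Ad(H)$-invariant inner product. Let $\mathfrak{m} \coloneqq \mathfrak{h}^\perp$ be its orthogonal complement in $\mathfrak{g}$, so $\mathfrak{g} = \mathfrak{h} \oplus \mathfrak{m}$. Since $\mathfrak{h}$ is $\Ad(H)$-invariant (because $H$ is a subgroup and $\Ad(H)$ acts on $\mathfrak{h}$) and the inner product is $\Ad(H)$-invariant, the orthogonal complement $\mathfrak{m}$ is also $\Ad(H)$-invariant. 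Proposition~\ref{prop:metric inner product correspondence} then produces the desired $G$-invariant metric on $M = G/H$ from the restriction of $\langle\cdot,\cdot\rangle$ to $\mathfrak{m}$.

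The main technical point, and essentially the only place compactness is used, is establishing $\Ad(G)$-invariance of the averaged inner product; everything else is formal. A minor subtlety is that Proposition~\ref{prop:metric inner product correspondence} is stated for connected $G$, but the correspondence we need --- existence of a $G$-invariant metric on $G/H$ from an $\Ad_H$-invariant inner product on $\mathfrak{m}$ --- goes through without connectedness by defining $g_{\lb a\rb}$ via transport from $\lb e\rb$ and using $\Ad_H$-invariance to verify that the definition does not depend on the coset representative.
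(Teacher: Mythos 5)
The paper never actually proves this proposition --- it is quoted as a standard fact in the same spirit as Proposition~\ref{prop:metric inner product correspondence} --- so there is no in-paper argument to compare against; your proof is the classical one and it is correct. Averaging an arbitrary inner product on $\mathfrak{g}$ over the normalized Haar measure, transporting by left translations, and then passing to the $\Ad_H$-invariant orthogonal complement $\mathfrak{m}=\mathfrak{h}^{\perp}$ to invoke the reductive correspondence is exactly the textbook route (cf.\ \cite{CE2008}), and the resulting metric on $G/H$ coincides with the quotient metric for which $G\to G/H$ is a Riemannian submersion, i.e.\ the metric the proposition says is ``induced.'' Your observation that the correspondence of Proposition~\ref{prop:metric inner product correspondence} does not genuinely need $G$ connected --- one defines $g_{\lb a\rb}$ by left transport from $\lb e\rb$ and uses $\Ad_H$-invariance for independence of the coset representative --- is also correct, since for $h\in H$ the translation $L_h$ fixes $\lb e\rb$ and acts on $T_{\lb e\rb}(G/H)\cong\mathfrak{m}$ by $\Ad(h)$. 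One small inaccuracy: with the averaged form as you defined it, checking $\Ad(b)$-invariance leads to the substitution $a\mapsto ab$, so what you need is invariance of $\mu$ under \emph{right} translations, not the left-invariance you cite with the change of variables $a\mapsto ba$. This is harmless --- Haar measure on a compact group is bi-invariant, or you can average $\langle \Ad(a^{-1})X,\Ad(a^{-1})Y\rangle_0$ instead so that left-invariance suffices --- but the justification as written is slightly off and worth fixing.
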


\subsection{Geodesic orbit spaces}
Let $M = G/H$ be a homogeneous space of $G$. If $M$ has a Riemannian metric $g$ such that every geodesic in $M$ is an orbit of a one-parameter subgroup of $G$, then we say that $(M,g)$ is a \emph{geodesic orbit space}. The following result \cite{KS2000} will allow us to construct several interesting examples.
\begin{theorem}\label{thm:GO}
Let $G$ be a compact Lie group with a bi-invariant metric $g$ and  $H$ be a subgroup such that $M = G/H$ is a smooth manifold (e.g., $H$ is closed subgroup). Then $M = G/H$ together with the metric $\widetilde{g}$ induced by $g$ is a geodesic orbit space.
\end{theorem}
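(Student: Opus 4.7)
The plan is to realize the induced metric $\widetilde{g}$ as the quotient of $g$ under a Riemannian submersion $\pi: G \to G/H$, and then exploit the classical fact that the geodesics of a bi-invariant metric on $G$ through the identity are precisely the one-parameter subgroups $t\mapsto \exp(tX)$.

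First I would use compactness and bi-invariance of $g$ to set up a reductive decomposition. Because $g$ is bi-invariant, $g_e$ is an $\Ad(G)$-invariant inner product on $\mathfrak{g}$, and in particular $\Ad_H$-invariant. Taking $\mathfrak{m} \coloneqq \mathfrak{h}^{\perp}$ with respect to $g_e$ yields a splitting $\mathfrak{g} = \mathfrak{h}\oplus\mathfrak{m}$ with $\Ad_H(\mathfrak{m})\subseteq \mathfrak{m}$. Proposition~\ref{prop:metric inner product correspondence} then shows that the restriction $g_e|_{\mathfrak{m}}$ produces a $G$-invariant metric on $M$, and a short verification identifies this metric with $\widetilde{g}$. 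With $\mathfrak{m}$ playing the role of the horizontal subspace at $e$ and its left translates $(dL_a)_e(\mathfrak{m})$ defining the horizontal distribution on $G$, the map $\pi$ becomes a Riemannian submersion.

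The heart of the argument is the claim that for every $X \in \mathfrak{m}$ the curve $\gamma_X(t) \coloneqq \pi\bigl(\exp(tX)\bigr)$ is a geodesic of $(M,\widetilde{g})$. Since $g$ is bi-invariant, the one-parameter subgroup $c(t) = \exp(tX)$ is a geodesic of $(G,g)$; its velocity at time $t$ is $(dL_{c(t)})_e(X)$, which is horizontal because $X\in\mathfrak{m}$. Thus $c$ is a horizontal geodesic in the total space of the submersion, and by the standard fact that horizontal geodesics project to geodesics in the base, $\gamma_X = \pi\circ c$ is a geodesic in $M$. By construction $\gamma_X(t) = \exp(tX)\cdot \lb e \rb$, so $\gamma_X$ is literally an orbit of the one-parameter subgroup $\{\exp(tX)\}_{t\in\mathbb{R}}$ through $\lb e\rb$.

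Finally I would bootstrap to arbitrary basepoints using transitivity. Any geodesic $\gamma$ through $x = a\cdot\lb e\rb\in M$ is of the form $\gamma(t) = a\cdot\gamma_X(t)$ for some $X\in\mathfrak{m}$, because the $G$-action is by isometries and every tangent vector at $x$ is an image of some $X\in\mathfrak{m}$ under the isometry $L_a$. Rewriting gives
\[
\gamma(t) = a\exp(tX)\cdot\lb e\rb = \exp\bigl(t\,\Ad(a)(X)\bigr)\cdot x,
\]
exhibiting $\gamma$ as the orbit of the one-parameter subgroup $\{\exp(t\,\Ad(a)(X))\}_{t\in\mathbb{R}}$ through $x$. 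I expect the main obstacle to be the middle step: while the reductive setup and the transitivity argument are formal, one must carefully invoke the Riemannian submersion structure (equivalently, the natural reductivity identity $g([X,Y]_{\mathfrak{m}},Z) + g(Y,[X,Z]_{\mathfrak{m}}) = 0$ for $X,Y,Z\in\mathfrak{m}$, which follows from $\Ad(G)$-invariance of $g_e$) to conclude that $\pi(\exp(tX))$ really is a geodesic and not merely a smooth curve with horizontal lift.
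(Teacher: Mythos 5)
Your argument is correct, but note that the paper does not prove Theorem~\ref{thm:GO} at all --- it is quoted from the literature (the citation \cite{KS2000}), so there is no in-paper proof to compare against. What you have written is the standard self-contained argument that a normal homogeneous space is a geodesic orbit space: $\Ad(G)$-invariance of $g_e$ gives the reductive splitting $\mathfrak{g}=\mathfrak{h}\oplus\mathfrak{m}$ with $\mathfrak{m}=\mathfrak{h}^{\perp}$, the induced metric makes $\pi:G\to G/H$ a Riemannian submersion with horizontal spaces $(dL_a)_e(\mathfrak{m})$, the one-parameter subgroup $\exp(tX)$ with $X\in\mathfrak{m}$ is a geodesic of the bi-invariant metric whose velocity $(dL_{\exp(tX)})_e(X)$ is horizontal for \emph{every} $t$ (so you do not even need the O'Neill lemma that an initially horizontal geodesic stays horizontal), hence it projects to the geodesic $\exp(tX)\cdot\lb e\rb$; uniqueness of geodesics plus the fact that $L_a$ is an isometry then transports this to an arbitrary basepoint, with the identity $a\exp(tX)a^{-1}=\exp(t\Ad(a)X)$ exhibiting the orbit form. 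The only points worth making explicit if this were written out in full are (i) why $\widetilde{g}$, the metric of Proposition~\ref{prop:nhs}, coincides with the submersion metric determined by $g_e|_{\mathfrak{m}}$ (this is where $\Ad_H$-invariance enters, to make the horizontal identification independent of the coset representative), and (ii) that the hypotheses effectively force $H$ to be closed, so that $\pi$ is a genuine submersion and Cartan's theorem makes $H$ a Lie subgroup with $\Ad_H(\mathfrak{h})\subseteq\mathfrak{h}$, whence $\Ad_H(\mathfrak{m})\subseteq\mathfrak{m}$. With those remarks your proposal is a complete proof, arguably more informative than the paper's bare citation.
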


In general it is difficult if not impossible to determine closed form analytic expressions for geodesics on a Riemannian manifold. But in the case of a geodesic orbit space, since its geodesics are simply orbits of one-parameter subgroups of $G$, the task reduces to determining the latter. The next result \cite[Theorem~1.3.5]{GW2009} will be helpful towards this end.
\begin{theorem}\label{thm:1-parameter subgp}
If $G$ is a matrix Lie group equipped with a bi-invariant metric, then every one-parameter subgroup $\gamma(t)$ of $G$ is of the form 
\[
\gamma(t) = \exp (ta) \coloneqq \sum_{k=0}^\infty \frac{t^ka^k}{k!}
\]
for some $a \in \mathfrak{g}$.
\end{theorem}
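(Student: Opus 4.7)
The plan is to show that any smooth homomorphism $\gamma\colon\mathbb{R}\to G$ is governed by a linear matrix ODE whose unique solution is the matrix exponential, so the bi-invariant metric hypothesis will actually enter only implicitly through the ambient setup, not through the computation itself.

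First I would let $\gamma\colon\mathbb{R}\to G$ be a one-parameter subgroup, i.e., a smooth curve satisfying $\gamma(s+t)=\gamma(s)\gamma(t)$ and $\gamma(0)=I$, and define
\[
a \coloneqq \gamma'(0) \in T_I G = \mathfrak{g}.
\]
Since $G$ is a \emph{matrix} Lie group, $G\subseteq\GL(n,\mathbb{R})$, the product $\gamma(t)\gamma(s)$ is ordinary matrix multiplication, so I may differentiate the homomorphism identity $\gamma(t+s)=\gamma(t)\gamma(s)$ in $s$ at $s=0$ to obtain
\[
\gamma'(t) = \gamma(t)\,\gamma'(0) = \gamma(t)\,a.
\]
This is a linear first-order matrix ODE in $\gamma(t)$ with initial condition $\gamma(0)=I$.

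Next I would verify directly that $\Phi(t)\coloneqq \sum_{k=0}^{\infty} t^k a^k/k!$ solves the same initial value problem. Convergence of the series in any submultiplicative matrix norm follows from $\|a^k\|\le\|a\|^k$, so the series converges uniformly on compact subsets of $\mathbb{R}$; this justifies differentiating term by term to get $\Phi'(t)=\Phi(t)\,a$, and clearly $\Phi(0)=I$. By the standard uniqueness theorem for linear ODEs, $\gamma(t)=\Phi(t)=\exp(ta)$ for all $t\in\mathbb{R}$. Finally, I should note that $\exp(ta)$ does lie in $G$ for every $t$: this follows either from the closure of the one-parameter subgroup $\gamma$ in $G$ combined with uniqueness, or from the standard fact that $\exp\colon\mathfrak{g}\to G$ maps the Lie algebra into $G$.

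The only potentially subtle point is the regularity step — namely that a \emph{continuous} homomorphism $\gamma\colon\mathbb{R}\to G$ is automatically smooth, so that $\gamma'(0)$ exists — but since the statement already assumes $\gamma$ is a smooth one-parameter subgroup, this difficulty does not arise here. The role of the bi-invariant metric is therefore not needed for the computation itself; it is relevant only in the companion Theorem~\ref{thm:GO}, which identifies geodesics on $G/H$ with orbits of such one-parameter subgroups, so that combining the two yields the explicit exponential form of the geodesics used throughout the rest of the paper.
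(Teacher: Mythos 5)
Your argument is correct. Note that the paper does not actually prove this statement --- it is quoted directly from Goodman--Wallach \cite[Theorem~1.3.5]{GW2009} --- so there is no internal proof to compare against; what you have written is the standard self-contained argument that the cited reference gives: differentiate the homomorphism identity $\gamma(t+s)=\gamma(t)\gamma(s)$ at $s=0$ to obtain the linear matrix ODE $\gamma'(t)=\gamma(t)a$ with $\gamma(0)=I$, check that the everywhere-convergent series $\exp(ta)$ solves the same initial value problem, and conclude by uniqueness for linear ODEs. Two small remarks. First, your closing worry about whether $\exp(ta)$ lies in $G$ is moot: the uniqueness argument already identifies $\exp(ta)$ with $\gamma(t)$, which lies in $G$ by hypothesis, so no separate appeal to closedness of $\gamma$ or to properties of the Lie-theoretic exponential is needed. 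Second, your observation that the bi-invariant metric plays no role here is accurate --- the hypothesis is inherited from the context in which the paper uses the result, namely Theorem~\ref{thm:GO}, where the metric is what identifies geodesics of $G/H$ with orbits of one-parameter subgroups; the present statement is purely about the group structure of a matrix Lie group.
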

So for example, every one-parameter subgroup of $\SO(n)$ must take the form $\gamma(t)  =\exp (ta)$ for some skew-symmetric matrix $a\in \mathfrak{so}(n)$.

\subsection{Riemannian notions}\label{sec:Riem}

Although not specific to homogeneous or geodesic orbit spaces, we state the famous Hopf--Rinow theorem \cite[Theorem $1.8$]{CE2008} and recall the definitions of Riemannian gradient and Hessian \cite{Helgason1978,KN1969,Boothby1975} below for easy reference.
\begin{theorem}[Hopf--Rinow]\label{thm:Hopf--Rinow}
Let $(M,g)$ be a connected Riemannian manifold. Then the following statements are equivalent:
\begin{enumerate}[\upshape (i)]
\item closed and bounded subsets of $M$ are compact;
\item $M$ is a complete metric space;
\item $M$ is geodesically complete, i.e., the exponential map $\exp_x: T_xM \to M$ is defined on the whole $T_xM$ for all $x\in M$.
\end{enumerate}
Furthermore, any one of these conditions guarantees that any two points $x,y$ on $M$ can be connected by a distance minimizing geodesic on $M$.
\end{theorem}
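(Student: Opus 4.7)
The plan is to establish the cyclic chain of implications (i) $\Rightarrow$ (ii) $\Rightarrow$ (iii) $\Rightarrow$ (i), and then extract the closing distance-minimizing-geodesic statement from the main lemma that drives (iii) $\Rightarrow$ (i).

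The first two implications are soft. For (i) $\Rightarrow$ (ii), any Cauchy sequence is bounded, hence contained in a closed bounded set which is compact by hypothesis; a Cauchy sequence with a convergent subsequence is itself convergent. For (ii) $\Rightarrow$ (iii), I take a maximal unit-speed geodesic $\gamma:[0,T)\to M$ with $T<\infty$, note that $d(\gamma(t_n),\gamma(t_m))\le |t_n-t_m|$ so $\gamma(t_n)$ is Cauchy, and let $p$ be its limit in the complete metric space $M$. Then a uniform neighborhood around $p$ on which exponential maps are defined on a common ball of radius $\varepsilon$ (produced by compactness of the unit sphere of $T_pM$ and smooth dependence on initial conditions) lets me restart the geodesic from some $\gamma(T-\varepsilon/2)$ and extend it past $T$, contradicting maximality.

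The heart of the argument is (iii) $\Rightarrow$ (i), which I would prove through the intermediate claim that underlies the final statement of the theorem: if $\exp_p$ is defined on all of $T_pM$, then any $q\in M$ is joined to $p$ by a minimizing geodesic. Fix $p$ and $q$ with $r=d(p,q)$; pick $\delta>0$ so that $\exp_p$ is a diffeomorphism on $\overline{B}_\delta(0)\subseteq T_pM$, and let $x_0=\exp_p(\delta v)$ minimize the distance from $q$ on the compact geodesic sphere $S_\delta(p)$. Define $\gamma(s)=\exp_p(sv)$ and set
\[
T=\sup\bigl\{\, s\in[\delta,r] : d(\gamma(s),q)=r-s\,\bigr\}.
\]
Showing $T=r$ is the main technical step. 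The set is nonempty (contains $\delta$, by a short first-variation/Gauss-lemma argument combined with the triangle inequality applied to any curve from $p$ to $q$ entering $S_\delta(p)$) and closed, so the sup is attained. If $T<r$, I repeat the construction at $\gamma(T)$: pick a small geodesic sphere around $\gamma(T)$, let $y_0$ minimize the distance from $q$ on it, and observe that the broken curve $\gamma|_{[0,T]}$ followed by the short radial geodesic from $\gamma(T)$ to $y_0$ has length exactly $r-d(y_0,q)$. Since it realizes the distance from $p$ to $y_0$, it must be smooth, hence an unbroken geodesic, forcing it to coincide with $\gamma$ extended past $T$; this contradicts the definition of $T$.

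With the lemma in hand, (iii) $\Rightarrow$ (i) is immediate: any closed bounded set $A$ with $\sup_{a\in A} d(p,a)\le R$ is contained in $\exp_p(\overline{B}_R(0))$, which is a continuous image of a compact set and hence compact, so $A$ is a closed subset of a compact set. The final distance-minimizing-geodesic statement is then exactly the lemma, since any of (i)--(iii) implies (iii). The main obstacle I anticipate is the closedness of $T$ and the broken-to-smooth step at $\gamma(T)$: it requires the Gauss lemma (to control lengths of nearby radial curves) together with the regularity fact that a length-minimizing piecewise geodesic has no corners, both of which are standard but non-trivial inputs.
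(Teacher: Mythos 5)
Your proposal is the standard, correct proof of Hopf--Rinow: the cycle (i)$\Rightarrow$(ii)$\Rightarrow$(iii)$\Rightarrow$(i), with the heart being the lemma that geodesic completeness at $p$ yields a minimizing geodesic from $p$ to any $q$, proved via the sup argument on $\{s : d(\gamma(s),q)=r-s\}$ and the broken-to-smooth (no-corner) step; the final clause of the theorem then falls out of that lemma. Note, however, that the paper does not prove this statement at all --- it is quoted as a classical result with a citation to Cheeger--Ebin \cite[Theorem~1.8]{CE2008}, so there is no in-paper argument to compare against; your sketch is essentially the argument found in that reference (and in do~Carmo). The only points you leave as acknowledged black boxes --- the Gauss lemma giving $d(p,\exp_p(\delta v))=\delta$ on the normal sphere, the uniform $\varepsilon$ for the extension step in (ii)$\Rightarrow$(iii), and the regularity fact that a minimizing piecewise geodesic has no corners --- are standard and correctly identified, though in a fully written version you should also dispose of the trivial case $d(p,q)\le\delta$ before defining the sup.
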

In the following we will write $ \mathfrak{X}(M)$ for the set of all smooth vector fields on $M$.
\begin{definition}[Riemannian gradient and Hessian]\label{def:gH}
Let $(M,g)$ be a Riemannian manifold. Let $f:M \to \mathbb{R}$ be a smooth function. The \emph{Riemannian gradient} of $f$, denoted $\nabla f$, is defined by
\[
g(\nabla f,V) = V(f),
\]
for any $V \in \mathfrak{X}(M)$. The \emph{Riemannian Hessian}  of $f$, denoted $\nabla^2 f$, is defined by
\[
(\nabla^2 f)(U,V) = g\bigl(\nabla_U(\nabla f), V\bigr),
\]
where $U,V \in \mathfrak{X}(M)$ and $\nabla_U V$ is the \emph{covariant derivative} of $V$ along $U$, which is uniquely determined by the Riemannian metric $g$.
\end{definition}

By their definitions, $\nabla f$ is a smooth vector field and $\nabla^2 f$ is a smooth field of \emph{symmetric bilinear forms}. In particular, $\nabla^2 f$ is uniquely determined by its values at points of the form $(V,V)$ over all $V\in \mathfrak{X}(M)$ because of bilinearity and symmetry, i.e.,
\begin{equation}\label{eq:polar}
\nabla^2f(U,V) = \frac{1}{2} \bigl(\nabla^2f (U + V, U + V)  -\nabla^2f(U,U) - \nabla^2f(V,V) \bigr),
\end{equation}
for any $U,V  \in \mathfrak{X}(M)$. Definition~\ref{def:gH} is standard but not as useful for us as a \emph{pointwise} definition --- the Riemannian gradient $\nabla f(x)$ and Riemannian Hessian $\nabla^2f(x)$ at  a point  $x\in M$ is given by
\begin{equation}\label{eq:gH}
g_x(\nabla f(x),X) = \frac{d f\bigl(\exp(tX)\bigr)}{dt}\Bigr|_{t=0},\qquad 
\nabla^2 f(x) (X,X) =\frac{d^2 f\bigl(\exp(tX)\bigr)}{dt^2}\Bigr|_{t=0},
\end{equation}
where $\exp(tX)$ is the geodesic curve emanating from $x$ in the direction $X \in T_x M$. We may obtain \eqref{eq:gH} by Taylor expanding $f\bigl(\exp(tX)\bigr)$.

Given a specific function $f$, one may express \eqref{eq:gH} in terms of \emph{local coordinates} on $M$ but in general there are no global formulas for  $\nabla f(x)$ and $\nabla^2f(x)$, and without which it would be difficult if not impossible to do optimization on $M$. We will see in Section~\ref{sec:gH} that when $M$ is a flag manifold, then  \eqref{eq:gH} may be expressed globally in terms of extrinsic coordinates.

\section{Basic differential geometry of flag manifolds}\label{sec:flag}

We will now define flags and flag manifolds formally and discuss some basic properties. Let $n$ be a positive integer and $\mathbb{V}$ be an $n$-dimensional vector space over $\mathbb{R}$. We write $\V(k, \mathbb{V})$ for the Stiefel manifold \cite{Stiefel1935} of $k$-frames in $\mathbb{V}$ and $\Gr(k, \mathbb{V})$ for the Grassmannian \cite{Grass} of $k$-dimensional subspaces in $\mathbb{V}$. If the choice of $\mathbb{V}$ is unimportant or if $\mathbb{V} = \mathbb{R}^n$, then we will just write $\V(k,n)$ and $\Gr(k,n)$.
\begin{definition}
Let $0 < n_1 < \dots < n_d < n$ be an increasing sequence of $d$ positive integers and $\mathbb{V}$ be an $n$-dimensional vector space over $\mathbb{R}$. A flag of type $(n_1,\dots, n_d)$ in $\mathbb{V}$ is a sequence of subspaces 
\[
\mathbb{V}_1 \subsetneq  \mathbb{V}_2 \subsetneq \dots \subsetneq \mathbb{V}_d, \quad \dim \mathbb{V}_i = n_i,\quad i=1,\dots,d.  
\]
We denote the set of such flags by $\Flag(n_1,\dots,n_d;\mathbb{V})$ and call it  the \emph{flag manifold} of type $(n_1,\dots, n_d)$. If $\mathbb{V}$ is unimportant or if $\mathbb{V} = \mathbb{R}^n$, then we will just write $\Flag(n_1,\dots, n_d;n)$.
\end{definition}
For notational convenience we will adopt the following convention throughout:
\[
n_0 \coloneqq 0, \qquad n_{d+1} \coloneqq n, \qquad \mathbb{V}_0 \coloneqq \{0 \}, \qquad \mathbb{V}_{d+1} \coloneqq \mathbb{V}.
\]

We will see in Proposition~\ref{prop:geometric structure} that flag manifolds are indeed manifolds. When $d=1$, $\Flag(k; \mathbb{V})$ is  the set of all $k$-dimensional subspaces of $\mathbb{V}$, which is the Grassmannian $\Gr(k,\mathbb{V})$. The other extreme case is when $d= n-1$ and $n_i = i$, $i =1,\dots, n-1$, and in which case $\Flag(1,\dots, n-1;\mathbb{V})$ comprises all \emph{complete flags} of $\mathbb{V}$, i.e.,
\[
\mathbb{V}_1 \subsetneq  \mathbb{V}_2 \subsetneq \dots \subsetneq \mathbb{V}_{n-1}, \quad \dim \mathbb{V}_i = i,\quad i=1,\dots,n-1.  
\]

Like the Grassmannian, the flag manifold is not merely a set but has rich geometric structures. We will start with the most basic ones and defer other useful characterizations to Sections~\ref{sec:homo} and \ref{sec:mfld}.
\begin{proposition}\label{prop:geometric structure}
Let $0 < n_1< \dots < n_d < n$ be integers and $\mathbb{V}$ be an $n$-dimensional real vector space. The flag manifold $\Flag(n_1,\dots, n_d;\mathbb{V})$ is
\begin{enumerate}[\upshape (i)]
\item\label{it:mfld} a connected compact smooth manifold;
\item \label{it:var1} an irreducible affine variety;
\item\label{it:cls1} a closed submanifold of $\Gr(n_1,\mathbb{V})\times \Gr(n_2,\mathbb{V}) \times \dots \times \Gr(n_d,\mathbb{V})$;
\item\label{it:cls2} a closed submanifold of $\Gr(n_1,\mathbb{V})\times \Gr(n_2-n_1,\mathbb{V}) \times \dots \times \Gr(n_d-n_{d-1},\mathbb{V}) $;
\item\label{it:fiber} a fiber bundle on $\Gr(n_d,\mathbb{V})$ whose fiber over $\mathbb{W}\in \Gr(n_d,\mathbb{V})$ is $\Flag(n_1,\dots, n_{d-1};\mathbb{W})$;
\item\label{it:var2} a smooth projective variety.
\end{enumerate}
\end{proposition}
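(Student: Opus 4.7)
The plan is to establish all six assertions by exhibiting $\Flag(n_1,\dots,n_d;\mathbb{V})$ as a homogeneous space of $\O(\mathbb{V})$ and then deducing each property from this uniform description together with a handful of concrete embeddings. First I would verify that $\O(\mathbb{V})$ acts transitively on flags (extend an orthonormal basis of $\mathbb{V}_1$, then of $\mathbb{V}_2$, and so on, to an orthonormal basis of $\mathbb{V}$) and compute the isotropy group at a standard flag: after fixing an inner product and orthonormal basis, it is the block-diagonal subgroup $H \cong \O(n_1)\times \O(n_2-n_1)\times \cdots \times \O(n_d-n_{d-1})\times \O(n-n_d)$. Theorem~\ref{thm:homogeneous structure} then provides a smooth manifold structure on $\Flag(n_1,\dots,n_d;\mathbb{V}) \cong \O(\mathbb{V})/H$ and compactness follows since $\O(\mathbb{V})$ is compact. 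For connectedness I would observe that $H$ contains $\diag(-1,1,\dots,1)$, so $\O(\mathbb{V}) = H\cdot \SO(\mathbb{V})$ and the quotient equals $\SO(\mathbb{V})/(H\cap \SO(\mathbb{V}))$; since $\SO(\mathbb{V})$ is connected, so is the quotient, settling (i).

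For (iii) I would consider the forgetful map $\iota\colon \Flag(n_1,\dots,n_d;\mathbb{V}) \to \prod_{i=1}^{d}\Gr(n_i,\mathbb{V})$, $\{\mathbb{V}_i\}\mapsto (\mathbb{V}_1,\dots,\mathbb{V}_d)$, which is smooth, injective, $\O(\mathbb{V})$-equivariant, and whose image is cut out by the closed inclusion conditions $\mathbb{V}_i\subseteq \mathbb{V}_{i+1}$ (algebraically $P_{i+1}P_i = P_i$ in projector coordinates). Injectivity of the differential follows by equivariance and a check at the base point, making $\iota$ a closed embedding. For (iv) I would fix an inner product on $\mathbb{V}$ and send $\{\mathbb{V}_i\}$ to $(\mathbb{V}_1,\mathbb{V}_2\cap\mathbb{V}_1^\perp,\dots,\mathbb{V}_d\cap\mathbb{V}_{d-1}^\perp)$; the image is the closed locus in $\prod_{i=1}^{d}\Gr(n_i-n_{i-1},\mathbb{V})$ consisting of tuples of pairwise orthogonal subspaces, and partial sums recover the flag, furnishing a diffeomorphism onto the image. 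For (v) the projection $\pi\colon \Flag(n_1,\dots,n_d;\mathbb{V}) \to \Gr(n_d,\mathbb{V})$, $\{\mathbb{V}_i\}\mapsto \mathbb{V}_d$, is smooth, surjective, and $\O(\mathbb{V})$-equivariant, the fiber over $\mathbb{W}$ is visibly $\Flag(n_1,\dots,n_{d-1};\mathbb{W})$, and local triviality descends from the principal $\bigl(\O(n_d)\times \O(n-n_d)\bigr)$-bundle $\O(\mathbb{V})\to \Gr(n_d,\mathbb{V})$.

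For (vi) I would compose $\iota$ from (iii) with the Plücker embeddings $\Gr(n_i,\mathbb{V})\hookrightarrow \mathbb{P}(\Lambda^{n_i}\mathbb{V})$ and the Segre embedding of the resulting product of projective spaces; the image is cut out by the Plücker relations together with the inclusion relations, exhibiting $\Flag(n_1,\dots,n_d;\mathbb{V})$ as a closed subvariety of a single projective space. For (ii) I would use the projector realization anticipated by Section~\ref{sec:mfld}: a flag is uniquely encoded by a tuple $(P_1,\dots,P_d)$ of symmetric projectors satisfying the polynomial relations $P_i = P_i^\tp$, $P_i^2 = P_i$, $\tr P_i = n_i$, and $P_{i+1}P_i = P_i$, embedding $\Flag(n_1,\dots,n_d;\mathbb{V})$ as a real affine variety in $\prod_{i=1}^{d}\mathbb{R}^{n\times n}$. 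Irreducibility then follows because this set equals the image of the connected, irreducible algebraic group $\SO(\mathbb{V})$ under the polynomial orbit map $a\mapsto (aP_1^0a^\tp,\dots,aP_d^0a^\tp)$ based at a reference flag, and the continuous algebraic image of an irreducible space is irreducible.

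The most delicate step will be (ii). The apparent tension between (ii) and (vi) — affine versus projective — is resolved by noting that they describe two different realizations of the same smooth manifold: the projector locus inside $\mathbb{R}^{N}$ on one hand, and the Plücker--Segre image in projective space on the other; this double life is a phenomenon peculiar to compact real algebraic varieties. The real work lies in verifying irreducibility in the real algebraic sense, which requires the surjectivity of the $\SO(\mathbb{V})$-orbit map (itself dependent on connectedness established in (i)) and a careful identification of the polynomial defining equations so as to confirm that the projector locus coincides with that orbit rather than merely containing it.
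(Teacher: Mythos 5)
Your proposal is correct and follows essentially the same route as the paper: the homogeneous-space description $\O(\mathbb{V})/H$ for (i), the two embeddings into products of Grassmannians for (iii), (iv), and (vi), the projection onto $\Gr(n_d,\mathbb{V})$ for (v), and the projector-tuple (projection-coordinate) realization for (ii). You in fact supply slightly more detail than the paper does on the irreducibility in (ii) (via the $\SO(\mathbb{V})$-orbit map) and on local triviality in (v), both of which the paper leaves implicit.
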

\begin{proof}
Property~\eqref{it:mfld} is well-known \cite{Monk1959,Borel1991} but also follows from the characterization in Proposition~\ref{prop:equivalent definition of flag manifold} as a quotient of a compact connected Lie group by a closed subgroup. Property~\eqref{it:var1} is a consequence of Propositions~\ref{prop:representation of flags} and \ref{prop:reduced coord}, where we give two different ways of representing $\Flag(n_1,\dots,n_d;\mathbb{V})$ as an affine variety in $\mathbb{R}^m$, $m = (nd)^2$.  Property~\eqref{it:var2} is a consequence of \eqref{it:cls1} or \eqref{it:cls2}, given that the Grassmannian is a projective variety.

In the following, let $\{\mathbb{V}_i\}_{i=1}^d \in \Flag(n_1,\dots,n_d;\mathbb{V})$, i.e., $\dim \mathbb{V}_i = n_i$, $i=1,\dots, d$. For \eqref{it:cls1}, the map 
\begin{equation}\label{eq:j}
\varepsilon : \Flag(n_1,\dots,n_d;\mathbb{V}) \to \Gr(n_1,\mathbb{V})\times \dots \times \Gr(n_d,\mathbb{V}), \quad \{\mathbb{V}_i\}_{i=1}^d \mapsto (\mathbb{V}_1, \mathbb{V}_2, \dots, \mathbb{V}_d)
\end{equation}
is clearly an embedding. Its image is closed since if $(\mathbb{V}_1 , \dots, \mathbb{V}_d) \not\in \varepsilon \bigl(\Flag(n_1,\dots,n_d;\mathbb{V})\bigr)$,
then there exists some $i \in \{ 1,\dots, d-1\}$ such that $\mathbb{V}_i \not\subseteq \mathbb{V}_{i+1}$; so if $\mathbb{V}'_i\in \Gr(n_i,\mathbb{V})$ and $\mathbb{V}'_{i+1} \in \Gr(n_{i+1},\mathbb{V})$ are in some small neighborhood of $\mathbb{V}_i $ and $\mathbb{V}_{i+1}$ respectively, then $\mathbb{V}'_i \not\subseteq \mathbb{V}'_{i+1}$.

For \eqref{it:cls2}, choose and fix an inner product on $\mathbb{V}$. Let $\mathbb{V}_i^{\perp}$ denote the orthogonal complement of $\mathbb{V}_i$ in $\mathbb{V}_{i+1}$, $i=1,\dots, d-1$.
The map
\begin{equation}\label{eq:j2}
\begin{aligned}
\varepsilon ': \Flag(n_1,\dots, n_d;\mathbb{V}) &\to \Gr(n_1,\mathbb{V})\times \Gr(n_2-n_1,\mathbb{V}) \times \dots \times \Gr(n_d-n_{d-1},\mathbb{V}),\\
\{\mathbb{V}_i\}_{i=1}^d &\mapsto (\mathbb{V}_1, \mathbb{V}_1^{\perp}, \dots, \mathbb{V}_{d-1}^{\perp})
\end{aligned}
\end{equation}
is clearly an embedding. That the image of $\varepsilon '$ is closed follows from the same argument used for $\varepsilon$.

For \eqref{it:fiber}, consider the map 
\[
\rho: \Flag(n_1,\dots,n_d;\mathbb{V}) \to \Gr(n_d,\mathbb{V}), \quad \{\mathbb{V}_i\}_{i=1}^d \mapsto \mathbb{V}_d,
\]
that is clearly surjective and smooth. For any  $\mathbb{W} \in  \Gr(n_d,\mathbb{V})$, $\rho^{-1}(\mathbb{W})$ consists of flags of the form 
\[
\mathbb{V}'_1 \subseteq \mathbb{V}'_2 \subseteq \dots \subseteq \mathbb{V}'_{d-1} \subseteq \mathbb{W},\quad \dim \mathbb{V}'_i = n_i,\quad i=1,\dots, d-1.
\]
In other words, the fiber $\rho^{-1}(\mathbb{W}) \cong  \Flag(n_1,\dots, n_{d-1};\mathbb{W})$.
\end{proof}

The fiber bundle structure in Proposition~\ref{prop:geometric structure}\eqref{it:fiber} may be recursively applied to get
\begin{gather*}
\Flag(n_1,\dots, n_{d-1};n_d) \to \Flag(n_1,\dots, n_d;n) \to \Gr(n_d,n),\\
\Flag(n_1,\dots, n_{d-2};n_{d-1}) \to \Flag(n_1,\dots, n_{d-1};n_d) \to \Gr(n_{d-1},n),
\end{gather*}
and so on, ending in the well-known characterization of the Stiefel manifold as a principal bundle over the Grassmannian
\[
\O(n) \to \V(k,n) \to \Gr(k,n).
\]

In the next two sections, we will see how the flag manifold may be equipped with extrinsic matrix coordinates and be represented as either homogeneous spaces of  matrices (Section~\ref{sec:homo}) or manifolds of matrices (Section~\ref{sec:mfld}) that in turn give closed-form analytic expressions for various differential geometric objects and operations needed for  optimization algorithms.

\section{Flag manifolds as matrix homogeneous spaces}\label{sec:homo}

We will discuss three representations of the flag manifold as \emph{matrix homogeneous spaces}, i.e., where a flag is represented as an equivalence class of matrices:
\begin{align}
\Flag(n_1,\dots,n_d;n) &\cong \O(n) / \bigl(\O(n_1) \times \O(n_2 - n_1) \times \dots \times \O(n_d - n_{d-1}) \times \O(n-n_d) \bigr), \label{eq:homo1}\\
\Flag(n_1,\dots,n_d;n) &\cong \SO(n) / \S\bigl(\O(n_1) \times \O(n_2 - n_1) \times \dots \times \O(n_d - n_{d-1}) \times \O(n-n_d) \bigr),\label{eq:homo2}\\
\Flag(n_1,\dots,n_d;n) &\cong  \V(n_d,n) / \bigl(\O(n_1) \times \O(n_2-n_1) \times \dots \times \O(n_d- n_{d-1})\bigr).\label{eq:homo3}
\end{align}
The characterization \eqref{eq:homo1} is standard \cite{Monk1959,Borel1991} and generalizes the well-known characterization of the Grassmannian as $\Gr(k,n) \cong \O(n)/\bigl(\O(k) \times \O(n-k)\bigr)$ whereas the characterization \eqref{eq:homo3} generalizes another well-known characterization of the Grassmannian as $\Gr(k,n) \cong \V(k,n)/\O(k)$.

Nevertheless, we will soon see that it is desirable to describe $\Flag(n_1,\dots,n_d;n)$ as a homogeneous space $G/H$ where $G$ is a connected Lie group --- note that $\O(n)$ is not connected whereas $\V(n_d,n)$ is not a group, so \eqref{eq:homo1} and \eqref{eq:homo3} do not meet this criterion. With this in mind, we state and prove \eqref{eq:homo2} formally.
\begin{proposition}\label{prop:equivalent definition of flag manifold}
Let $0 < n_1< \dots < n_d < n$ be  $d$ positive integers. The flag manifold $\Flag(n_1,\dots, n_d;n)$ is diffeomorphic to the homogeneous space
\[
\SO(n) / \S\bigl(\O(n_1) \times \O(n_2 - n_1) \times \dots \times \O(n_d - n_{d-1}) \times \O(n-n_d) \bigr)
\]
where  $\S\bigl(\O(n_1) \times \O(n_2 - n_1) \times \dots \times \O(n_d - n_{d-1}) \times \O(n-n_d) \bigr)$ is the subgroup of unit-determinant block diagonal matrices with orthogonal blocks, i.e.,
\[
\begin{bsmallmatrix}
Q_1 & 0 & \dots & 0 \\
0 & Q_2 & \dots & 0\\
\vdots & \vdots & \ddots & \vdots \\
0 & 0 & \dots & Q_{d+1}
\end{bsmallmatrix} \in \O(n), \quad Q_i\in \O(n_i-n_{i-1}), \quad i=1,\dots, d+1, \quad \prod_{i=1}^{d+1} \det (Q_i) = 1.
\]
\end{proposition}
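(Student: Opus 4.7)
The plan is to apply Theorem~\ref{thm:homogeneous structure} to the natural linear action of $\SO(n)$ on $\Flag(n_1,\dots,n_d;n)$. I would first define the action
\[
\varphi : \SO(n) \times \Flag(n_1,\dots,n_d;n) \to \Flag(n_1,\dots,n_d;n), \quad \varphi\bigl(Q, \{\mathbb{V}_i\}_{i=1}^d\bigr) = \{Q\mathbb{V}_i\}_{i=1}^d,
\]
which is smooth because the action of $\O(n)$ on each Grassmannian factor is smooth and $\Flag(n_1,\dots,n_d;n)$ embeds into a product of Grassmannians by Proposition~\ref{prop:geometric structure}\eqref{it:cls1}. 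The substance then lies in (a) verifying transitivity and (b) computing the isotropy subgroup at a convenient base point.

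For transitivity, given two flags $\{\mathbb{V}_i\}$ and $\{\mathbb{W}_i\}$, I would apply Gram--Schmidt to produce orthonormal bases adapted to each: start with an orthonormal basis of $\mathbb{V}_1$, extend it to one of $\mathbb{V}_2$ by adding orthonormal vectors in $\mathbb{V}_2 \cap \mathbb{V}_1^\perp$, and so on, finally extending to an orthonormal basis of $\mathbb{R}^n$ using $\mathbb{V}_d^\perp$. Collecting these vectors as columns yields $P \in \O(n)$ with $P \mathbb{V}_i^0 = \mathbb{V}_i$, where $\mathbb{V}_i^0 \coloneqq \spn(e_1,\dots,e_{n_i})$ is the standard flag; similarly construct $R \in \O(n)$ with $R\mathbb{V}_i^0 = \mathbb{W}_i$. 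Then $RP^\tp$ maps $\{\mathbb{V}_i\}$ onto $\{\mathbb{W}_i\}$. To guarantee $RP^\tp \in \SO(n)$, negate the last column of $R$ if the determinant has the wrong sign. Since $n_d < n$ this column lies in $\mathbb{W}_d^\perp$, and since the flag only records $\mathbb{W}_1,\dots,\mathbb{W}_d$, such a sign flip leaves $\{\mathbb{W}_i\}$ unchanged.

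To compute the isotropy subgroup, I would use the base point $\{\mathbb{V}_i^0\}$. A matrix $Q \in \SO(n)$ fixes it iff $Q\mathbb{V}_i^0 = \mathbb{V}_i^0$ for every $i = 1,\dots,d$. Partitioning $Q$ into blocks $Q_{ij}$ of sizes $(n_i - n_{i-1}) \times (n_j - n_{j-1})$ for $1 \le i, j \le d+1$ (using the convention $n_0 = 0$, $n_{d+1} = n$), the stability condition forces $Q_{ij} = 0$ whenever $i > j$; that is, $Q$ is block upper triangular. Orthogonality $Q^\tp Q = I$ then forces the strictly upper blocks to vanish as well, leaving $Q = \diag(Q_1,\dots,Q_{d+1})$ with $Q_i \in \O(n_i - n_{i-1})$, and $\det Q = 1$ becomes $\prod_{i=1}^{d+1} \det(Q_i) = 1$. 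This identifies the isotropy subgroup as exactly $H = \S\bigl(\O(n_1) \times \O(n_2 - n_1) \times \dots \times \O(n - n_d)\bigr)$, and Theorem~\ref{thm:homogeneous structure} then yields the desired $\SO(n)$-equivariant diffeomorphism $\SO(n)/H \cong \Flag(n_1,\dots,n_d;n)$. The only subtle point in the entire argument is the sign flip used to upgrade a transitive $\O(n)$-action to a transitive $\SO(n)$-action, which relies on the strict inequality $n_d < n$ that is built into the setup.
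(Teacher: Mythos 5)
Your proof is correct, but it takes a genuinely different route from the paper. The paper takes the classical $\O(n)$-quotient description \eqref{eq:homo1} for granted (citing Monk and Borel) and deduces the $\SO(n)$ statement from it: the inclusion $\SO(n)\hookrightarrow\O(n)$ induces a map $\tau$ between the quotients, injectivity follows from the computation $\SO(n)\cap\bigl(\O(n_1)\times\dots\times\O(n-n_d)\bigr)=\S\bigl(\O(n_1)\times\dots\times\O(n-n_d)\bigr)$, and surjectivity follows by correcting the determinant of a representative $A\in\O(n)$ through right-multiplication by $\diag(A_1,I,\dots,I)$ with $\det A_1=-1$, which does not change the coset. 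You instead build the homogeneous-space structure from scratch: you exhibit a smooth transitive $\SO(n)$-action on flags, compute the isotropy group of the standard flag (invariance forces block upper triangular form, orthogonality then forces block diagonal form, and $\det=1$ gives the product condition), and invoke Theorem~\ref{thm:homogeneous structure}. Both arguments hinge on the same kind of sign-flip trick permitted by the strict inclusions (the paper flips a determinant inside the first $n_1\times n_1$ block on the right; you flip a column lying in $\mathbb{V}_d^\perp$, using $n_d<n$). What your route buys is self-containedness: you never assume \eqref{eq:homo1}, and the same transitivity/isotropy computation with $\O(n)$ in place of $\SO(n)$ would establish \eqref{eq:homo1} as a byproduct. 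What it costs is the need to justify smoothness of the action, which you do by leaning on the smooth structure of $\Flag(n_1,\dots,n_d;n)$ and its embedding into a product of Grassmannians from Proposition~\ref{prop:geometric structure}\eqref{it:cls1}; this is legitimate since that structure is classical, but note that the paper partly derives Proposition~\ref{prop:geometric structure}\eqref{it:mfld} from the present proposition, so to avoid any appearance of circularity you should cite the classical references for the smooth structure rather than the paper's proposition.
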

\begin{proof}
We start with the characterization \eqref{eq:homo1}, i.e., in this proof we assume `$=$' in place of `$\cong$' in \eqref{eq:homo1}.
We claim that the required diffeomorphism $\tau$ is given as in the commutative diagram below:
\[\begin{tikzcd}
\SO(n) \arrow{r}{j} \arrow[swap]{d}{\pi'} & \O(n) \arrow{d}{\pi} \\
\SO(n) / \S\bigl(\O(n_1) \times \O(n_2 - n_1) \times \dots \times \O(n-n_d) \bigr) \arrow{r}{\tau} & \Flag(n_1,\dots, n_d;n) 
\end{tikzcd}
\]
Here $j$ is the inclusion of $\SO(n)$ in $\O(n)$, $\pi$ and $\pi'$ the respective quotient maps, and $\tau$ the induced map. Since
\[
\SO(n)  \cap  \bigl(\O(n_1) \times \O(n_2 - n_1) \times \dots  \times \O(n-n_d) \bigr)=  \S\bigl(\O(n_1) \times \O(n_2 - n_1) \times \dots \times \O(n - n_{d}) \bigr),
\]
$\tau$ is injective. To show that it is surjective, let $\{\mathbb{V}_i\}_{i=1}^d \in \Flag(n_1,\dots, n_d;n)$ be a flag represented by some $A\in \O(n)$, i.e., $\pi(A) = \{\mathbb{V}_i\}_{i=1}^d$. If $\det(A) = 1$, then we already have $\tau (\pi_1(A)) = \{\mathbb{V}_i\}_{i=1}^d$ by commutativity of the diagram. If $\det(A) = -1$, take any $A_1\in \O(n_1)$ with $\det(A_1)=-1$, set
\[
B = A \begin{bsmallmatrix}
A_1 & 0 &\cdots & 0 \\
0 & I_{n_2 - n_1} & \cdots & 0 \\
\vdots & \vdots & \ddots & \vdots \\
0 & 0 & \cdots & I_{n - n_d}
\end{bsmallmatrix} \in \SO(n),
\]
and observe that $\tau (\pi_1(B)) = \pi (B) = \pi (A)$.
\end{proof}

\subsection{Orthogonal coordinates for the flag manifold}\label{sec:ortho}

An immediate consequence of Proposition~\ref{prop:equivalent definition of flag manifold} is that the flag manifold is connected. The characterization \eqref{eq:homo2} says that a point on $\Flag(n_1,\dots,n_d;n)$  may be represented by  the equivalence class of matrices
\begin{align}\label{eq:equivalence class}
\lb Q \rb = \left\lbrace Q\begin{bsmallmatrix}
Q_1 & 0 & \dots & 0 \\
0 & Q_2 & \dots & 0\\
\vdots & \vdots & \ddots & \vdots \\
0 & 0 & \dots & Q_{d+1}
\end{bsmallmatrix}: Q_i \in \O(n_i-n_{i-1}),\; i=1,\dots,d+1, \;\prod_{i=1}^{d+1} \det Q_i = 1 \right\rbrace
\end{align}
for some $Q \in \SO(n)$. We will call such a representation \emph{orthogonal coordinates} for the flag manifold.

The Lie algebra of $\S\bigl(\O(n_1) \times \O(n_2 - n_1) \times \dots \times \O(n-n_d) \bigr)$ is simply $ \mathfrak{so}(n_1) \times \mathfrak{so}(n_2-n_1)\times \dots \times \mathfrak{so}(n-n_d)$, which we will regard as a Lie subalgebra of block diagonal matrices,
\begin{align}
\mathfrak{h} &= \left\lbrace
\begin{bsmallmatrix}
A_1 & 0 & \dots & 0\\
0 & A_2 & \dots & 0\\
\vdots & \vdots & \ddots & \vdots\\
0 & 0 & \dots & A_{d+1} 
\end{bsmallmatrix} \in \mathfrak{so}(n) :\begin{multlined} A_1 \in \mathfrak{so}(n_1), A_2 \in \mathfrak{so}(n_2-n_1), \dots \\ \dots,  A_{d+1} \in \mathfrak{so}(n-n_d) \end{multlined}  \right\rbrace. \label{eq:h}
\intertext{Let $\mathfrak{m}$ be the natural complement of $\mathfrak{h}$ in $\mathfrak{so}(n)$,}
\mathfrak{m} &= \left\lbrace
\begin{bsmallmatrix}
0 & B_{1,2}  &\dots & B_{1,d+1} \\
-B_{1,2}^\tp  & 0   &\dots & B_{2,d+1} \\
\vdots & \vdots & \ddots & \vdots\\
-B_{1,d+1}^\tp  & -B_{2,d+1}^\tp  & \dots & 0
\end{bsmallmatrix} \in \mathfrak{so}(n) : \begin{multlined} B_{ij}\in \mathbb{R}^{(n_i-n_{i-1}) \times (n_j-n_{j-1})}, \\ 1\le i < j \le d+1 \end{multlined}\right\rbrace.\label{eq:m}
\end{align}
In particular, we have the direct sum decomposition $\mathfrak{so}(n) = \mathfrak{h}  \oplus \mathfrak{m}$ as vector spaces.

The groups $\O(n)$ and $\O(n_1) \times \O(n_2-n_1)\times \cdots \times \O(n - n_d)$ have the same Lie algebras as $\SO(n)$ and $\S\bigl(\O(n_1) \times \O(n_2-n_1)\times  \cdots \times  \O(n - n_d)\bigr)$, namely, $\mathfrak{so}(n)$ and $\mathfrak{so}(n_1) \times \mathfrak{so}(n_2-n_1)\times \dots \times \mathfrak{so}(n-n_d)$ respectively. The tangent space of a homogeneous space $G/H$ at any point is a translation of the tangent space at the identity element $\lb H \rb\in G/H$, which depends only on the Lie algebras $\mathfrak{g}$ and $\mathfrak{h}$ of $G$ and $H$ respectively:
\[
T_{\lb H \rb} G/H  \simeq \mathfrak{g}/\mathfrak{h},
\]
a fact that we will use in the proof of Proposition~\ref{prop:tangent1}. As such we do not need to distinguish the two homogeneous space structures \eqref{eq:homo1} and \eqref{eq:homo2} when we discuss geometric quantities associated with tangent spaces, e.g., geodesic, gradient, Hessian, parallel transport. In the sequel we will make free use of this flexibility in switching between \eqref{eq:homo1} and \eqref{eq:homo2}.

\begin{proposition}\label{prop:reductive complement}
Let $\mathfrak{h}$ and $\mathfrak{m}$ be as in \eqref{eq:h} and \eqref{eq:m} and $H =\O(n_1)\times \O(n_2-n_1)\times \dots \times \O(n-n_d)$. Then the subspace $\mathfrak{m}$ is $\Ad_H$-invariant, i.e., $\Ad(a)(X) \in \mathfrak{m}$ for every $a \in H$ and $X\in \mathfrak{m}$.
\end{proposition}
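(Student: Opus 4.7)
The plan is to verify $\Ad_H$-invariance by a direct block-matrix computation, exploiting the fact that $H$ is a matrix Lie group sitting inside $\O(n)$, so that the adjoint action reduces to conjugation: for any $a \in H$ and $X \in \mathfrak{so}(n)$, $\Ad(a)(X) = aXa^{-1} = aXa^\tp$, where the last equality uses orthogonality of $a$.

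First I would parametrize the two ingredients conformally. An element $a \in H$ has the block-diagonal form $a = \diag(Q_1, \ldots, Q_{d+1})$ with $Q_i \in \O(n_i - n_{i-1})$, while an element $X \in \mathfrak{m}$ is partitioned into blocks $X_{ij}$ of size $(n_i - n_{i-1}) \times (n_j - n_{j-1})$ subject to $X_{ii} = 0$ and $X_{ji} = -X_{ij}^\tp$, as in \eqref{eq:m}.

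The key computation is then a routine block multiplication: since both $a$ and $a^\tp$ are block-diagonal, the $(i,j)$ block of $aXa^\tp$ is simply $Q_i X_{ij} Q_j^\tp$. Consequently the diagonal blocks $(aXa^\tp)_{ii} = Q_i \cdot 0 \cdot Q_i^\tp$ all vanish. To conclude that $aXa^\tp \in \mathfrak{m}$, it remains to check skew-symmetry, which follows either by direct transposition, $(aXa^\tp)^\tp = aX^\tp a^\tp = -aXa^\tp$, or from the general fact that conjugation by an element of $\O(n)$ preserves $\mathfrak{so}(n)$. Combining the two observations gives the desired inclusion $\Ad_H(\mathfrak{m}) \subseteq \mathfrak{m}$.

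I do not anticipate a genuine obstacle: the proposition amounts to the two preservations just mentioned, namely that the block-diagonal structure of $a$ preserves the block pattern defining $\mathfrak{m}$, and that orthogonal conjugation preserves skew-symmetry. The only small care point is invoking the matrix-group identification of $\Ad$ with conjugation, which is standard for closed subgroups of $\GL(n)$ and requires no separate argument here.
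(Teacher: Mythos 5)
Your proposal is correct and follows essentially the same route as the paper: identify $\Ad(a)(X)$ with $aXa^\tp$ and check blockwise that conjugation by a block-diagonal orthogonal matrix sends the $(i,j)$ block to $Q_i X_{ij} Q_j^\tp$, preserving both the zero diagonal blocks and skew-symmetry. The only difference is cosmetic --- the paper carries out the computation explicitly for $d=2$ ``for notational simplicity,'' whereas you state it for general $d$.
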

\begin{proof}
We need to show that $\Ad(a)(X) \in \mathfrak{m}$ whenever $a\in H$ and $X\in \mathfrak{m}$. For notational simplicity, we assume $d = 2$. Let
\[
a = \begin{bsmallmatrix}
A_1 & 0 & 0 \\[0.75ex]
0 & A_2 & 0 \\[0.75ex]
0 & 0 & A_3
\end{bsmallmatrix}\qquad\text{and}\qquad
X = \begin{bsmallmatrix}
0 & B_{1,2} & B_{1,3} \\
-B_{1,2}^\tp  & 0 & B_{2,3} \\
-B_{1,3}^\tp  & -B_{2,3}^\tp  & 0
\end{bsmallmatrix},
\]
where $A_i \in \O(n_i-n_{i-1})$, $i =1,2,3$, and
$B_{ij} \in \mathbb{R}^{(n_i-n_{i-1}) \times (n_j-n_{j-1})}$, $1\le i < j \le 3$.
Then $\Ad(a)(X) =a X a^{-1} = a X a^\tp $ since $a$ is an orthogonal matrix; and we have 
\[
a X a^\tp  = \begin{bsmallmatrix}
A_1 & 0 & 0 \\[0.75ex]
0 & A_2 & 0 \\[0.75ex]
0 & 0 & A_3
\end{bsmallmatrix}  \begin{bsmallmatrix}
0 & B_{1,2} & B_{1,3} \\
-B_{1,2}^\tp  & 0 & B_{2,3} \\
-B_{1,3}^\tp  & -B_{2,3}^\tp  & 0
\end{bsmallmatrix} \begin{bsmallmatrix}
A_1^\tp  & 0 & 0 \\
0 & A_2^\tp  & 0 \\
0 & 0 & A_3^\tp 
\end{bsmallmatrix} 
 = \begin{bsmallmatrix}
0 & A_1 B_{1,2} A_2^\tp  & A_1B_{1,3}A_3^\tp  \\
-A_2 B_{1,2}^\tp  A_1^\tp  & 0 & A_2B_{2,3}A_3^\tp  \\
-A_3B_{1,3}^\tp A_1^\tp  & -A_3B_{2,3}^\tp  A_2^\tp  & 0
\end{bsmallmatrix} \in \mathfrak{m}
\]
as required.
\end{proof}

We  now have all the ingredients necessary for deriving closed-form analytic  expressions for the tangent space, metric, geodesic, geodesic distance, and parallel transport on a flag manifold in orthogonal coordinates. We begin with the representation of a tangent  space as a vector space of matrices.
\begin{proposition}[Tangent space I]\label{prop:tangent1}
Let $\lb Q \rb\in \Flag(n_1,\dots, n_d;n) = \O(n) / \bigl(\O(n_1) \times \O(n_2 - n_1) \times \dots \times \O(n_d - n_{d-1}) \times \O(n-n_d) \bigr)$ be represented by $Q\in \O(n)$. Its tangent space at $\lb Q \rb$  is given by
\begin{multline*}
T_{\lb Q \rb}  \Flag(n_1,\dots, n_d;n)  = \lbrace QB \in \mathbb{R}^{n \times n} : B \in \mathfrak{m}\rbrace \\
=\left\lbrace Q\begin{bsmallmatrix}
0 & B_{1,2}  &\dots & B_{1,d+1} \\
-B_{1,2}^\tp  & 0   &\dots & B_{2,d+1} \\
\vdots & \vdots & \ddots & \vdots\\
-B_{1,d+1}^\tp  & -B_{2,d+1}^\tp  & \dots & 0
\end{bsmallmatrix} \in \mathbb{R}^{n \times n} :\begin{multlined} B_{i,j}\in \mathbb{R}^{(n_i-n_{i-1}) \times (n_j-n_{j-1})}, \\ 1\le i < j \le d + 1 \end{multlined} \right\rbrace.
\end{multline*}
In particular, the dimension of a flag manifold is  given by
\[
\dim \Flag(n_1,\dots, n_d;n) = \sum_{1 \le i < j \le d+1} (n_i-n_{i-1}) (n_j - n_{j-1}).
\]
\end{proposition}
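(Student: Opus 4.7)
The plan is to use the homogeneous space structure $\Flag(n_1,\dots,n_d;n) \cong \O(n)/H$ from Proposition~\ref{prop:equivalent definition of flag manifold}, with $H = \O(n_1)\times\O(n_2-n_1)\times\dots\times\O(n-n_d)$, and identify the tangent space through the reductive decomposition $\mathfrak{so}(n) = \mathfrak{h}\oplus\mathfrak{m}$ from \eqref{eq:h} and \eqref{eq:m}. By the comment preceding Proposition~\ref{prop:reductive complement}, it does not matter whether we work with \eqref{eq:homo1} or \eqref{eq:homo2}, so I will use $G = \O(n)$ for concreteness.

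First I would handle the identity coset. Since $G = \O(n)$ is a Lie group of matrices, left translation gives the standard identification $T_Q G = Q\mathfrak{so}(n)$ for every $Q\in\O(n)$, i.e., curves through $Q$ have the form $Q\exp(tX)$ for $X\in\mathfrak{so}(n)$. Let $\pi:G\to G/H$ be the quotient map. At $I\in G$ the differential $d\pi_I:\mathfrak{so}(n)\to T_{\lb I\rb}(G/H)$ is surjective with kernel equal to $\mathfrak{h}$ (the Lie algebra of the stabilizer), so $T_{\lb I\rb}(G/H)\cong\mathfrak{so}(n)/\mathfrak{h}$. Using the direct-sum decomposition $\mathfrak{so}(n) = \mathfrak{h}\oplus\mathfrak{m}$ exhibited in \eqref{eq:h}--\eqref{eq:m}, the restriction $d\pi_I|_{\mathfrak{m}}$ is a linear isomorphism, so I would identify
\[
T_{\lb I\rb}\Flag(n_1,\dots,n_d;n) \;=\; \mathfrak{m}.
\]

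Next I would transport this identification to an arbitrary coset $\lb Q\rb$ using the left-translation action $L_Q:G/H\to G/H$, $\lb a\rb\mapsto\lb Qa\rb$, which is a diffeomorphism sending $\lb I\rb$ to $\lb Q\rb$. Its differential $(dL_Q)_{\lb I\rb}:T_{\lb I\rb}(G/H)\to T_{\lb Q\rb}(G/H)$ is a linear isomorphism. Concretely, a tangent vector $B\in\mathfrak{m}$ at $\lb I\rb$ is realized by the curve $\pi(\exp(tB))$, and pushing forward by $L_Q$ produces the curve $\pi(Q\exp(tB))$ whose velocity at $t=0$ is represented by the matrix $QB$ (viewed via $d\pi_Q$ applied to the tangent vector $QB\in T_Q G$). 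This gives
\[
T_{\lb Q\rb}\Flag(n_1,\dots,n_d;n) \;=\; \{QB : B\in\mathfrak{m}\},
\]
which matches the block form in the statement by the explicit description of $\mathfrak{m}$.

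Finally, the dimension formula drops out by counting parameters in $\mathfrak{m}$: the matrix is determined by the independent off-diagonal blocks $B_{i,j}\in\mathbb{R}^{(n_i-n_{i-1})\times(n_j-n_{j-1})}$ for $1\le i<j\le d+1$, so $\dim\mathfrak{m} = \sum_{1\le i<j\le d+1}(n_i-n_{i-1})(n_j-n_{j-1})$, and this equals $\dim\Flag(n_1,\dots,n_d;n)$ since the reductive complement furnishes a linear isomorphism with the tangent space at any point.

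The only subtlety — and what I would view as the main item to nail down — is justifying that $d\pi_I$ restricts to an isomorphism on $\mathfrak{m}$ and that the ``left-multiply by $Q$'' description is \emph{representation independent}, i.e., replacing $Q$ by $QH_0$ for $H_0\in H$ changes the representative $B$ to $\Ad(H_0^{-1})(B)$, which lies in $\mathfrak{m}$ by Proposition~\ref{prop:reductive complement}. This $\Ad_H$-invariance of $\mathfrak{m}$ is exactly what ensures that the description $T_{\lb Q\rb}\Flag = Q\mathfrak{m}$ is well defined on equivalence classes, and it is the essential ingredient supplied by Proposition~\ref{prop:reductive complement}.
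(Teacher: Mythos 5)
Your proof is correct and follows essentially the same route as the paper: identify $T_{\lb I\rb}\Flag(n_1,\dots,n_d;n)\simeq\mathfrak{g}/\mathfrak{h}\simeq\mathfrak{m}$ via the quotient structure, then transport to an arbitrary point by the differential of the left translation $L_Q$, which acts as $X\mapsto QX$. Your additional observation that $\Ad_H$-invariance of $\mathfrak{m}$ (Proposition~\ref{prop:reductive complement}) makes the matrix description independent of the choice of coset representative is a correct and worthwhile refinement that the paper's proof leaves implicit.
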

\begin{proof}
Let $M = \Flag(n_1,\dots, n_d;n) $. 
For $Q = I$, the identity matrix, this follow from $T_{\lb I \rb} M \simeq \mathfrak{g}/\mathfrak{h} \simeq \mathfrak{m}$. For $Q$ arbitrary, the left translation $L_Q: M \to M$ is a diffeomorphism, which means that $(d {L_Q})_{\lb I \rb} : T_{\lb I \rb} M \to T_{\lb Q \rb} M$ is an  isomorphism. The result then follows from $(d {L_Q})_{\lb I \rb} (X) = QX$ for all $X \in T_{\lb I \rb} M$.
\end{proof}

There are several ways to equip $\Flag(n_1,\dots, n_d;n)$ with a Riemannian metric but there is a distinguished choice that is given by a negative multiple of the \emph{Killing form} of $\mathfrak{so}(n)$, although we will not need to introduce this concept.
\begin{proposition}[Riemannian metric I]\label{prop:metric}
The metric $g$ on $\Flag(n_1,\dots, n_d;n)$ defined by
\begin{equation}\label{eq:metric1}
g_{\lb Q \rb} (X,Y) = \frac{1}{2} \tr(X^\tp  Y)
\end{equation}
for all $X,Y\in T_{\lb Q \rb} \Flag(n_1,\dots, n_d;n)$ is an $\SO(n)$-invariant metric. If we write 
\[
X = Q\begin{bsmallmatrix}
0 & B_{1,2}  &\dots & B_{1,d+1} \\
-B_{1,2}^\tp  & 0   &\dots & B_{2,d+1} \\
\vdots & \vdots & \ddots & \vdots\\
-B_{1,d+1}^\tp  & -B_{2,d+1}^\tp  & \dots & 0
\end{bsmallmatrix}, \quad 
Y = Q\begin{bsmallmatrix}
0 & C_{1,2}  &\dots & C_{1,d+1} \\
-C_{1,2}^\tp  & 0   &\dots & C_{2,d+1} \\
\vdots & \vdots & \ddots & \vdots\\
-C_{1,d+1}^\tp  & -C_{2,d+1}^\tp  & \dots & 0
\end{bsmallmatrix}  \in \mathbb{R}^{n \times n},
\]
where $B_{ij}, C_{ij}\in \mathbb{R}^{(n_i-n_{i-1}) \times (n_j-n_{j-1})}$, $1\le i < j \le d$, then $g$ may be expressed as
\begin{equation}\label{eq:metric2}
g_{\lb Q \rb}  (X, Y) = \sum_{1\le i < j \le d+1} \tr (B_{ij}^\tp  C_{ij}).
\end{equation}
\end{proposition}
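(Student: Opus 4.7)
My plan is to build the metric abstractly via Proposition~\ref{prop:metric inner product correspondence} and then translate it back into the matrix language furnished by Proposition~\ref{prop:tangent1}. The crucial ingredient is already supplied by Proposition~\ref{prop:reductive complement}: the direct sum decomposition $\mathfrak{so}(n) = \mathfrak{h} \oplus \mathfrak{m}$ with $\mathfrak{m}$ an $\Ad_H$-invariant complement, where $H = \S(\O(n_1)\times\O(n_2-n_1)\times\dots\times\O(n-n_d))$. So Proposition~\ref{prop:metric inner product correspondence} reduces the task to exhibiting an $\Ad_H$-invariant inner product on $\mathfrak{m}$, and then identifying the induced $\SO(n)$-invariant metric with the claimed trace formula.

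First I would define the inner product $\langle B,C\rangle \coloneqq \tfrac{1}{2}\tr(B^\tp C)$ on $\mathfrak{m}$ and verify $\Ad_H$-invariance. Since $H \subseteq \O(n)$, every $a \in H$ satisfies $a^{-1} = a^\tp$, so $\Ad(a)(B) = aBa^\tp$; then
\[
\langle \Ad(a)B,\Ad(a)C\rangle = \tfrac{1}{2}\tr(aB^\tp a^\tp a Ca^\tp) = \tfrac{1}{2}\tr(B^\tp C)\langle \cdot\rangle
\]
follows instantly from $a^\tp a = I$ and the cyclic property of trace. (I'll fix the display.) This produces an $\SO(n)$-invariant Riemannian metric $g$ on $\Flag(n_1,\dots,n_d;n) = \SO(n)/H$ whose value at $\lb I\rb$ equals $\langle\cdot,\cdot\rangle$.

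Next, to establish \eqref{eq:metric1} at an arbitrary $\lb Q\rb$, I invoke Proposition~\ref{prop:tangent1} to write $X = QB$, $Y = QC$ with $B,C \in \mathfrak{m}$. Because $g$ is $\SO(n)$-invariant and $(dL_{Q^{-1}})_{\lb Q\rb}$ sends $QB \mapsto B$, I have $g_{\lb Q\rb}(X,Y) = g_{\lb I\rb}(B,C) = \tfrac{1}{2}\tr(B^\tp C)$, and then $Q^\tp Q = I$ gives $\tr(B^\tp C) = \tr(B^\tp Q^\tp Q C) = \tr(X^\tp Y)$, proving the formula. Well-definedness (independence of the representative $Q$) is automatic from $\SO(n)$-invariance, but one can also verify it directly: swapping $Q$ for $QR$ with $R \in H$ changes the matrix representative of a given tangent vector from $X$ to $XR$, and $\tr((XR)^\tp (YR)) = \tr(X^\tp Y)$.

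Finally, for the block form \eqref{eq:metric2}, I expand $\tr(B^\tp C) = \sum_{k=1}^{d+1}\tr((B^\tp C)_{kk}) = \sum_{i,k}\tr(B_{ki}^\tp C_{ki})$, where the $B_{ki}, C_{ki}$ are the full blocks of the skew-symmetric matrices (not just the named ones above the diagonal). Splitting the sum according to whether $k<i$ or $k>i$ (the $k=i$ terms vanish) and using $B_{ki} = -B_{ik}^\tp$, $C_{ki} = -C_{ik}^\tp$ for $k>i$, the two halves of the sum coincide and yield $2\sum_{1\le i<j\le d+1}\tr(B_{ij}^\tp C_{ij})$; the prefactor $\tfrac{1}{2}$ in the metric then cancels the doubling to give the stated formula. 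The only real obstacle is this bookkeeping in the block expansion—in particular, tracking the sign flip from skew-symmetry so that the two contributions add rather than cancel—which clarifies retroactively why the normalization $\tfrac{1}{2}$ (not the bare trace) is the natural choice.
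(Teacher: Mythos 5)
Your proposal is correct and follows essentially the same route as the paper: it hinges on Proposition~\ref{prop:metric inner product correspondence} together with the reductive complement of Proposition~\ref{prop:reductive complement} and the $\Ad_H$-invariant inner product $\tfrac{1}{2}\tr(B^\tp C)$ on $\mathfrak{m}$. The only differences are cosmetic --- you verify $\Ad_H$-invariance and the block expansion \eqref{eq:metric2} explicitly (steps the paper labels ``easy to verify'' or leaves implicit), while the paper additionally motivates the choice of inner product via the bi-invariant metric on $\SO(n)$ and its essential uniqueness when $\mathfrak{so}(n)$ is simple.
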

\begin{proof}
We will first need to establish an  $\Ad_{\SO(n)}$-invariant inner product on $\mathfrak{so}(n)$.  It is a standard fact \cite{O'Neil1983} that bi-invariant metrics on a Lie group $G$ are in one-to-one correspondence with $\Ad_G$-invariant inner products on its Lie algebra $\mathfrak{g}$. In our case, $G = \SO(n)$, $\mathfrak{g} =\mathfrak{so}(n)$, and $\Ad_{\SO(n)} : \SO(n) \to \GL(\mathfrak{so}(n))$. Since $\SO(n)$ is compact,  by Proposition~\ref{prop:nhs} it has a bi-invariant metric, which corresponds to an $\Ad_{\SO(n)}$-invariant inner product on $\mathfrak{so}(n)$.

When $n \ne 2,4$, $\mathfrak{so}(n)$ is a simple Lie algebra  and so the $\Ad_{\SO(n)}$-invariant inner product is unique up to a scalar multiple. When $n  = 2$, $\SO(2)$ is one-dimensional and thus abelian, so the bi-invariant metric on $\SO(2)$ is unique up to a scalar. When $n  = 4$, $\SO(4) \simeq \SO(3) \times \SO(3)$ as Lie groups, so it has a two-dimensional family of bi-invariant metrics. For all values of $n$, we may take our $\Ad_{\SO(n)}$-invariant inner product (the choice is unique for all $n \ne 4$) as
\begin{equation}\label{eq:ip1}
\langle X,Y\rangle \coloneqq \frac{1}{2} \tr (X^\tp  Y)
\end{equation}
for all  $X,Y\in \mathfrak{so}(n)$.

Let $G =\SO(n)$ and  $H = \S\bigl(\O(n_1) \times \O(n_2 - n_1) \times \dots \times \O(n-n_d) \bigr)$. We will use the characterization of a flag manifold in \eqref{eq:homo2}, i.e., $\Flag(n_1,\dots, n_d;n) = G/ H$.
Since $\mathfrak{m}$ is a subspace of $\mathfrak{so}(n)$, the restriction of $\langle\cdot,\cdot\rangle$ in \eqref{eq:ip1} to $\mathfrak{m}$, denoted by $\langle \cdot,\cdot\rangle_{\mathfrak{m}}$, is an inner product on $\mathfrak{m}$. It is easy to verify that $\langle \cdot,\cdot\rangle_{\mathfrak{m}}$ is $\Ad_H$-invariant. Taken together with Propositions~\ref{prop:metric inner product correspondence}, \ref{prop:nhs}, and \ref{prop:reductive complement}, we have that $\langle\cdot,\cdot\rangle_{\mathfrak{m}}$ uniquely determines a $G$-invariant metric $g$ on $G/H$, as required.
\end{proof}

Unsurprisingly  the metric $g$ in  Proposition~\ref{prop:metric} coincides with the canonical metric on Grassmannian  ($d= 1$) introduced in \cite{EAS}. It also follows from Theorem~\ref{thm:GO} that,  with this metric $g$, $\Flag(n_1,\dots, n_d;n)$ is not merely a Riemannian manifold but also a geodesic orbit space. In fact, $g$ is the only choice of a metric that makes $\Flag(n_1,\dots,n_d;n)$ into a geodesic orbit space \cite{AA2007}. We will next derive explicit analytic expressions for geodesic (Propositions~\ref{prop:geodesics1} and \ref{prop:geodesics2}), arclength (Corollary~\ref{cor:arc-length}), geodesic distance (Proposition~\ref{prop:geodist}), and parallel transport (Proposition~\ref{prop:pt}).
\begin{proposition}[Geodesic I]\label{prop:geodesics1}
Let $\lb Q \rb \in \Flag(n_1,\dots, n_d;n) = \O(n)/\bigl(\O(n_1)\times \dots \times \O(n-n_d)\bigr)$ and $g$ be the metric in \eqref{eq:metric2}. Every geodesic on  $\Flag(n_1,\dots, n_d;n)$ passing through $\lb Q \rb$ takes the form 
\[
\lb Q(t) \rb = \left\lbrace Q\exp (tB) \begin{bsmallmatrix}
Q_1 & 0 & \dots & 0\\
0 & Q_2 & \dots & 0\\
\vdots & \vdots & \ddots & \vdots \\
0 & 0 & \dots & Q_{d+1}
\end{bsmallmatrix}  \in \O(n): Q_i \in \O(n_i- n_{i-1}),\; i=1,\dots, d+1
\right\rbrace,
\]
for some direction
\begin{equation}\label{eq:exp map direction}
B = \begin{bsmallmatrix}
0 & B_{1,2}  &\dots & B_{1,d+1} \\
-B_{1,2}^\tp  & 0   &\dots & B_{2,d+1} \\
\vdots & \vdots & \ddots & \vdots\\
-B_{1,d+1}^\tp  & -B_{2,d+1}^\tp  & \dots & 0
\end{bsmallmatrix} \in \mathbb{R}^{n \times n}, \quad \begin{multlined} B_{ij}\in \mathbb{R}^{(n_i-n_{i-1}) \times (n_j-n_{j-1})}, \\ 1\le i < j \le d+1\end{multlined}.
\end{equation}
\end{proposition}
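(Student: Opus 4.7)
The plan is to assemble the geodesic orbit machinery of Section~\ref{sec:basic} with the concrete setup of Section~\ref{sec:ortho}.

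First, I would invoke Theorem~\ref{thm:GO} with $G=\SO(n)$ and $H=\S\bigl(\O(n_1)\times\cdots\times\O(n-n_d)\bigr)$: the group $G$ is compact connected, $H$ is a closed subgroup, and the bi-invariant metric on $\SO(n)$ that corresponds to the $\Ad_{\SO(n)}$-invariant inner product $\tfrac12\tr(X^\tp Y)$ on $\mathfrak{so}(n)$ is exactly what induces, via Propositions~\ref{prop:metric inner product correspondence}, \ref{prop:nhs}, and \ref{prop:reductive complement}, the metric $g$ of Proposition~\ref{prop:metric} on $M=G/H\cong\Flag(n_1,\dots,n_d;n)$. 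Theorem~\ref{thm:GO} then tells me that $(M,g)$ is a geodesic orbit space.

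Second, by Theorem~\ref{thm:1-parameter subgp}, every one-parameter subgroup of the matrix Lie group $\SO(n)$ has the form $t\mapsto\exp(tB')$ for some $B'\in\mathfrak{so}(n)$. Combined with the geodesic orbit property, every geodesic of $M$ through the identity coset $\lb I\rb$ is thus of the form $t\mapsto\exp(tB')\cdot\lb I\rb=\lb\exp(tB')\rb$ for some $B'\in\mathfrak{so}(n)$.

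Third, I would handle the passage from $B'\in\mathfrak{so}(n)$ to $B\in\mathfrak{m}$ and the passage from $\lb I\rb$ to an arbitrary $\lb Q\rb$ simultaneously by a translation argument. Given initial data $\bigl(\lb Q\rb,\xi\bigr)$ with $\xi\in T_{\lb Q\rb}M$, Proposition~\ref{prop:tangent1} writes $\xi=QB$ for a unique $B\in\mathfrak{m}$. The curve $\tilde\sigma(t)=\lb\exp(tB)\rb$ is a geodesic through $\lb I\rb$ by step two, with initial velocity $B\in\mathfrak{m}\cong T_{\lb I\rb}M$. Because $g$ is $G$-invariant the left translation $L_Q:M\to M$ is an isometry, hence $L_Q\circ\tilde\sigma(t)=\lb Q\exp(tB)\rb$ is itself a geodesic, starting at $\lb Q\rb$ with initial velocity $(dL_Q)_{\lb I\rb}B=QB=\xi$ (cf.\ the proof of Proposition~\ref{prop:tangent1}). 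Uniqueness of geodesics for given initial data forces this curve to be the unique geodesic with the prescribed initial conditions.

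Finally, the explicit set description in the statement is the unfolding of the equivalence class $\lb Q\exp(tB)\rb$ under the definition \eqref{eq:equivalence class}, and the matrix form \eqref{eq:exp map direction} is just the defining block structure of $\mathfrak{m}$ from \eqref{eq:m}. The only genuinely delicate step is the reduction from a raw $B'\in\mathfrak{so}(n)$ (supplied abstractly by the geodesic orbit property) to a $B\in\mathfrak{m}$ with the specified block form; the translation-plus-uniqueness argument above bypasses any direct manipulation of $\exp$ on $\mathfrak{m}\oplus\mathfrak{h}$, so I do not foresee an obstacle beyond careful bookkeeping.
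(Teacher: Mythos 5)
Your proposal is correct and follows essentially the same route as the paper: establish via Theorem~\ref{thm:GO} (with the bi-invariant metric corresponding to $\tfrac12\tr(X^\tp Y)$, Propositions~\ref{prop:metric inner product correspondence}, \ref{prop:nhs}, \ref{prop:reductive complement}) that $\Flag(n_1,\dots,n_d;n)$ with $g$ is a geodesic orbit space, and then apply Theorem~\ref{thm:1-parameter subgp} to express geodesics through one-parameter subgroups $\exp(tB)$. The only difference is that you spell out the left-translation-by-$Q$ isometry and the uniqueness-of-geodesics bookkeeping that reduces a general direction in $\mathfrak{so}(n)$ to $B\in\mathfrak{m}$, which the paper compresses into ``the result follows immediately.''
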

\begin{proof}
Since $\Flag(n_1,\dots, n_d;n)$ with the metric $g$ in  Proposition~\ref{prop:metric} is a geodesic orbit space, the result follows immediately from Theorem~\ref{thm:1-parameter subgp}.
\end{proof}

\begin{corollary}[Arclength I]\label{cor:arc-length}
The arclength of a geodesic $\gamma(t) = \lb Q(t) \rb$ passing through $Q$ in the direction $B$  is given by
\[
\lVert\gamma(t)  \rVert  = t \Bigl[  \sum\nolimits_{1\le i<j \le d+1} \tr (B_{ij}^\tp  B_{ij}) \Bigr]^{1/2} 
 = t \sqrt{\frac{\tr(B^\tp  B)}{2}},
\]
where $B$ is as in \eqref{eq:exp map direction}.
\end{corollary}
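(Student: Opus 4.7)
The plan is to compute the arclength as the integral of the speed of the curve, exploiting the fact (already established in Propositions~\ref{prop:metric} and \ref{prop:geodesics1}) that the metric $g$ is $\SO(n)$-invariant and that geodesics through $\lb Q\rb$ are orbits of one-parameter subgroups acting by left multiplication.

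First I would differentiate the curve $\gamma(t) = \lb Q\exp(tB)\rb$ to obtain a representative of the velocity vector. Writing $Q(t) = Q\exp(tB)$, one has $Q'(t) = Q\exp(tB)\,B$, so under the identification in Proposition~\ref{prop:tangent1} the velocity at $\gamma(t)$ is represented by $Q(t)B \in T_{\lb Q(t)\rb}\Flag(n_1,\dots,n_d;n)$ with $B \in \mathfrak{m}$. Applying the metric formula \eqref{eq:metric1} at the point $\lb Q(t)\rb$,
\[
g_{\lb Q(t)\rb}\bigl(Q(t)B,Q(t)B\bigr) = \tfrac{1}{2}\tr\bigl(B^\tp Q(t)^\tp Q(t) B\bigr) = \tfrac{1}{2}\tr(B^\tp B),
\]
since $Q(t)$ is orthogonal. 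Hence the speed is constant along the geodesic, equal to $\sqrt{\tr(B^\tp B)/2}$.

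Next I would unpack this using the explicit block form of $B$ in \eqref{eq:exp map direction}. Because $B$ is block skew-symmetric with off-diagonal blocks $B_{ij}$ for $i<j$ and $-B_{ij}^\tp$ for $i>j$, a direct computation of $B^\tp B$ and taking the trace yields
\[
\tr(B^\tp B) = 2\sum_{1\le i<j\le d+1}\tr(B_{ij}^\tp B_{ij}),
\]
matching the second expression in \eqref{eq:metric2}.

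Finally I would integrate: by definition $\lVert\gamma(t)\rVert = \int_0^t \sqrt{g_{\gamma(s)}(\gamma'(s),\gamma'(s))}\,ds$, and since the integrand is the constant $\sqrt{\tr(B^\tp B)/2}$, the integral evaluates to $t\sqrt{\tr(B^\tp B)/2} = t\bigl[\sum_{i<j}\tr(B_{ij}^\tp B_{ij})\bigr]^{1/2}$, as claimed. There is no real obstacle here: the only substantive input is the $\SO(n)$-invariance of the metric, which guarantees the constancy of the speed, and this has already been proven in Proposition~\ref{prop:metric}; the remainder is a trace computation.
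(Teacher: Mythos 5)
Your proposal is correct and follows the same route as the paper: the paper's proof simply invokes the definition of arclength together with the metric expressions \eqref{eq:metric1} and \eqref{eq:metric2}, which is exactly the computation you carry out explicitly (constant speed $\sqrt{\tr(B^\tp B)/2}$ from orthogonality of $Q(t)$, the block-trace identity, then integration). Your version just spells out the details the paper leaves implicit.
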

\begin{proof}
This follows from the definition of arclength
\[
\lVert\gamma(t)  \rVert \coloneqq  \int_{0}^t \sqrt{g_{\lb Q \rb} (\gamma'(x), \gamma'(x)) } \; dx
\]
and the expressions for $g$ in \eqref{eq:metric1} and \eqref{eq:metric2}.
\end{proof}

\begin{proposition}[Geodesic II]\label{prop:geodesics2}
Let $\gamma$ be a geodesic in $\Flag(n_1,\dots, n_d;n) = \O(n)/\bigl(\O(n_1)\times \dots \times \O(n-n_d)\bigr)$ with $\gamma(0) = \lb Q \rb$ for some $Q\in \O(n)$ and $\gamma'(0) = H \in T_{\lb Q \rb}  \Flag(n_1,\dots, n_d;n)$. Let $Q^\tp H =  V D V^\tp $ with $V\in \O(n)$ and 
\begin{align}\label{eq:geodesic variable D}
D = \operatorname{diag} \biggl( \begin{bmatrix}
0 & -\lambda_1 \\
\lambda_1 & 0 
\end{bmatrix}, \dots, \begin{bmatrix}
0 & -\lambda_r \\
\lambda_r & 0 
\end{bmatrix}, 0_{n-2r}
\biggr)\in \mathfrak{so}(n),
\end{align}
where $2r = \rank(Q^\tp H )$ and $\lambda_1,\dots,\lambda_r$ are positive real numbers. Then $\gamma(t)  = \lb U \Sigma(t) V^\tp \rb$ where $U = QV\in \O(n)$ and 
\begin{align} \label{eq:geodesic variable sigma}
\Sigma(t) = \operatorname{diag} \biggl(
\begin{bmatrix}
\cos t\lambda_1 & -\sin t\lambda_1 \\
\sin t\lambda_1 & \cos t\lambda_1 
\end{bmatrix}, 
\dots, 
\begin{bmatrix}
\cos t\lambda_r & -\sin t\lambda_r \\
\sin t\lambda_r & \cos t\lambda_r 
\end{bmatrix}, 
I_{n - 2r}
\biggr) \in \O(n).
\end{align}
\end{proposition}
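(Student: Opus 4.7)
The plan is to apply Proposition~\ref{prop:geodesics1} and then diagonalize the resulting skew-symmetric generator using the standard real canonical form. From Proposition~\ref{prop:geodesics1}, every geodesic through $\lb Q \rb$ has the form $\gamma(t) = \lb Q \exp(tB) \rb$ for some $B \in \mathfrak{m}$, hence in particular $B \in \mathfrak{so}(n)$. Matching initial velocities at $t=0$, I would differentiate $\gamma(t) = \lb Q \exp(tB)\rb$ and use the left-translation identification from the proof of Proposition~\ref{prop:tangent1} to read off $\gamma'(0) = QB$, which forces $B = Q^\tp H$. This is the only place where any care is needed: one must confirm that $Q^\tp H \in \mathfrak{m} \subseteq \mathfrak{so}(n)$, but this is immediate from the description of $T_{\lb Q \rb} \Flag(n_1,\dots,n_d;n)$ in Proposition~\ref{prop:tangent1}.

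Next, I would invoke the real block-diagonal canonical form for skew-symmetric matrices: any $B \in \mathfrak{so}(n)$ can be written as $B = V D V^\tp$ with $V \in \O(n)$ and $D$ of the block-diagonal shape in \eqref{eq:geodesic variable D}, where the positive scalars $\lambda_1,\dots,\lambda_r$ are half the absolute values of the nonzero eigenvalues of $B$ (which are purely imaginary and come in conjugate pairs $\pm i \lambda_k$). Since each nonzero conjugate pair contributes a rank-$2$ block to $D$, I get $\rank(B) = 2r$, matching the hypothesis $2r = \rank(Q^\tp H)$.

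To finish, I would substitute this decomposition into the matrix exponential and use $\exp(tVDV^\tp) = V \exp(tD) V^\tp$ together with the block-diagonal form of $\exp(tD)$. For each $2 \times 2$ block $\begin{bsmallmatrix} 0 & -\lambda_k \\ \lambda_k & 0\end{bsmallmatrix}$, a direct power-series computation (or the observation that this is $\lambda_k J$ with $J^2 = -I_2$) yields
\[
\exp\!\left(t\begin{bmatrix}0 & -\lambda_k\\ \lambda_k & 0\end{bmatrix}\right) = \begin{bmatrix} \cos t\lambda_k & -\sin t\lambda_k \\ \sin t\lambda_k & \cos t\lambda_k \end{bmatrix},
\]
and the trailing zero block exponentiates to $I_{n-2r}$. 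Thus $\exp(tD) = \Sigma(t)$ as in \eqref{eq:geodesic variable sigma}, and assembling everything gives $\gamma(t) = \lb Q V \Sigma(t) V^\tp \rb = \lb U \Sigma(t) V^\tp \rb$ with $U = QV \in \O(n)$.

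There is no real obstacle here; the statement is essentially a repackaging of Proposition~\ref{prop:geodesics1} in the coordinates provided by the real Schur decomposition of $Q^\tp H$. The only substantive point to verify carefully is the correspondence between the initial tangent vector $H$ and the Lie algebra generator $B = Q^\tp H$, which is justified by the left-translation argument already used in Proposition~\ref{prop:tangent1}.
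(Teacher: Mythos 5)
Your proposal is correct and follows essentially the same route as the paper's proof: invoke Proposition~\ref{prop:geodesics1} to write $\gamma(t) = \lb Q\exp(tB)\rb$, identify $B = Q^\tp H$ from $\gamma'(0) = QB$, put the skew-symmetric (hence normal) matrix $B$ into the real block-diagonal form $VDV^\tp$, and exponentiate blockwise to get $\Sigma(t)$ and $U = QV$. One minor wording slip: the $\lambda_k$ are the absolute values of the nonzero eigenvalues $\pm i\lambda_k$ of $B$ (equivalently its singular values), not half of them, as your own parenthetical already indicates.
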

\begin{proof}
By Proposition~\ref{prop:geodesics1}, the geodesic $\gamma$ takes the form $\gamma(t) = \lb Q \exp(tB) \rb$ for some $B\in \mathfrak{so}(n)$ and $Q \in \O(n)$ representing $\gamma(0)$. Hence we have $H = \gamma'(0) = QB $ and $Q^\tp  H = B$. Since $B$ is a skew-symmetric and thus a normal matrix, by the spectral theorem \cite[Theorem~7.25]{Sheldon1997}, $B = V D V^\tp $ for some $V\in \O(n)$ and $D$ of the form in \eqref{eq:geodesic variable D}, with $2r = \rank(B) =  \rank(Q^\tp H) $ and $\lambda_1,\dots,\lambda_r$ are positive reals as they are singular values of $B$. Therefore,
\[
Q \exp(tB) = U \Sigma(t) V^\tp ,
\]
where $U = QV$ and $\Sigma(t)$ is as in \eqref{eq:geodesic variable sigma}.
\end{proof}

\begin{proposition}[Geodesic distance]\label{prop:geodist}
The geodesic distance with respect to the metric $g$ between $\lb P \rb, \lb Q \rb \in  \Flag(n_1,\dots, n_d;n) = \O(n)/\bigl(\O(n_1)\times \dots \times \O(n-n_d)\bigr)$ is
\begin{equation}\label{eq:geodist}
d(\lb P \rb,\lb Q \rb) = \sqrt{\sum\nolimits_{i=1}^r \lambda_i^2},
\end{equation}
where $\lambda_1,\dots, \lambda_r$ are positive real numbers such that $P^\tp  Q = V \Sigma V^\tp$ with $V \in \O(n)$ and 
\[
\Sigma = \operatorname{diag} \biggl(
\begin{bmatrix}
\cos \lambda_1 & -\sin \lambda_1 \\
\sin \lambda_1 & \cos \lambda_1 
\end{bmatrix}, 
\dots, 
\begin{bmatrix}
\cos \lambda_r & -\sin \lambda_r \\
\sin \lambda_r & \cos \lambda_r 
\end{bmatrix}, 
0_{n-2r}
\biggr).
\]
\end{proposition}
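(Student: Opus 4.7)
The plan is to use Hopf--Rinow to reduce the distance computation to minimizing a matrix norm, then invoke the spectral theorem exactly as in Proposition~\ref{prop:geodesics2}. Since $\Flag(n_1,\ldots,n_d;n)$ is compact by Proposition~\ref{prop:geometric structure}(i), it is complete, so Hopf--Rinow (Theorem~\ref{thm:Hopf--Rinow}) produces a distance-minimizing geodesic from $\lb P\rb$ to $\lb Q\rb$. By Proposition~\ref{prop:geodesics1} such a geodesic has the form $\gamma(t) = \lb P\exp(tB)\rb$ for some $B\in\mathfrak{m}$, and the endpoint condition $\gamma(1) = \lb Q\rb$ is equivalent to $\exp(B) \in P^\tp Q\cdot H$, where $H = \S\bigl(\O(n_1)\times\cdots\times\O(n-n_d)\bigr)$. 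Corollary~\ref{cor:arc-length} then reduces the problem to computing the infimum of $\sqrt{\tfrac{1}{2}\tr(B^\tp B)}$ over all admissible $B$.

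For the upper bound I would use the given decomposition $P^\tp Q = V\Sigma V^\tp$ and set $B_0 = VDV^\tp$, where $D$ is the block-skew matrix displayed in \eqref{eq:geodesic variable D}. A direct check gives $\exp(B_0) = V\Sigma V^\tp = P^\tp Q \in P^\tp Q\cdot H$ (taking $R = I$), and cyclicity of the trace yields $\tfrac{1}{2}\tr(B_0^\tp B_0) = \tfrac{1}{2}\tr(D^\tp D) = \sum_{i=1}^r \lambda_i^2$. This produces an explicit geodesic of length $\sqrt{\sum_i \lambda_i^2}$ joining $\lb P\rb$ to $\lb Q\rb$, hence $d(\lb P\rb,\lb Q\rb) \le \sqrt{\sum_i \lambda_i^2}$.

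The harder direction is the lower bound. For an arbitrary admissible $B\in\mathfrak{m}$ with $\exp(B) = P^\tp Q R$ for some $R\in H$, the spectral theorem (applied as in the proof of Proposition~\ref{prop:geodesics2}) gives $B = W\widetilde D W^\tp$ with angles $\mu_1,\ldots,\mu_s > 0$, whence $\tfrac{1}{2}\tr(B^\tp B) = \sum_j \mu_j^2$. Since $\exp(B)$ and $P^\tp Q$ differ by right multiplication by an element of the block-diagonal subgroup $H$, their canonical rotation-angle spectra are controlled relative to one another, and the $\lambda_i$ prescribed by the statement (interpreted as the minimal positive principal-branch angles) must satisfy $\sum_i\lambda_i^2 \le \sum_j \mu_j^2$, yielding the matching lower bound.

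The main obstacle is precisely this last step: quantifying how right multiplication by $H$ acts on the canonical-form angles of $P^\tp Q$ and verifying that no choice of nontrivial $R\in H$ can strictly decrease $\sum_j \mu_j^2$. The upper bound is essentially a bookkeeping identity once the spectral decomposition is in hand, and Step 1 is a routine unwinding of definitions, but establishing optimality of the representative $R = I$ together with the correct principal-branch convention on the $\lambda_i$ is the technical heart of the argument.
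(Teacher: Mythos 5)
Your first step (compactness, Hopf--Rinow, the form $\lb P\exp(tB)\rb$ of a minimizing geodesic, and Corollary~\ref{cor:arc-length} for its length) is exactly the paper's argument. But you then read the proposition as ``given a decomposition $P^\tp Q=V\Sigma V^\tp$, the distance equals $\sqrt{\sum_i\lambda_i^2}$,'' which forces a matching lower bound over all admissible $B$ with $\exp(B)\in P^\tp Q\cdot H$, and you concede in your last paragraph that this step --- showing that no $R\in H$ and no other branch of rotation angles can decrease $\sum_j\mu_j^2$ --- is not established. That is a genuine gap: as written, your argument only yields the upper bound. The paper avoids the issue entirely by arguing in the opposite direction: it takes the distance-minimizing geodesic supplied by Hopf--Rinow, reads off the distance from Corollary~\ref{cor:arc-length} with $\lambda_1,\lambda_1,\dots,\lambda_r,\lambda_r$ \emph{defined} as the nonzero singular values of that particular minimizing $B$, and then invokes Proposition~\ref{prop:geodesics2} to produce the decomposition $P^\tp Q=V\Sigma V^\tp$ from the same $B$; no optimality comparison over the $H$-coset or over branch conventions appears, because the $\lambda_i$ are by construction those of the minimizer.

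Two smaller points in your upper bound also need care. The matrix $B_0=VDV^\tp$ built from an arbitrary spectral decomposition of $P^\tp Q$ need not lie in $\mathfrak{m}$, so Proposition~\ref{prop:geodesics1} and Corollary~\ref{cor:arc-length} do not directly certify $\lb P\exp(tB_0)\rb$ as a geodesic of the claimed length; you would need either to arrange $B_0\in\mathfrak{m}$ or to fall back on a submersion-type estimate (the projected curve has length at most $\sqrt{\tr(B_0^\tp B_0)/2}$). Moreover, the ``direct check'' $\exp(B_0)=V\Sigma V^\tp$ is off in two respects that the paper itself glosses over: $\exp(D)$ has $I_{n-2r}$ in its last block while the displayed $\Sigma$ has $0_{n-2r}$, and the endpoint condition in the quotient is only $\exp(B)=P^\tp QR$ for some $R\in H$, not $\exp(B)=P^\tp Q$. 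These do not doom your strategy, but they are assumed away rather than addressed, and the unproven lower bound remains the decisive missing piece.
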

\begin{proof}
By Proposition~\ref{prop:geometric structure}\eqref{it:cls1}, we may regard $\Flag(n_1,\dots, n_d;n)$ as a closed, and therefore compact, submanifold of $\Gr(n_1,n)\times \dots \times \Gr(n_d,n)$. By Theorem~\ref{thm:Hopf--Rinow}, there is a distance minimizing geodesic $\lb P\exp(tB) \rb$ connecting $\lb P \rb$ and $\lb Q \rb$. By Corollary~\ref{cor:arc-length}, we get \eqref{eq:geodist} with $\lambda_1,\lambda_1,\dots,\lambda_r,\lambda_r$  the nonzero singular values of $B$. Lastly, by Proposition~\ref{prop:geodesics2}, we get the decomposition $P^\tp Q = V \Sigma V^\tp $ for some $V\in \O(n)$.
\end{proof}

Let $\mathfrak{m}$ be as in \eqref{eq:m}. For $B \in \mathfrak{m}$, we define a map
\begin{equation}\label{eq:phi}
\varphi_{B}: \mathfrak{m} \to \mathfrak{m}, \quad X \mapsto \frac{1}{2}[B,X]_{\mathfrak{m}} \coloneqq  \frac{1}{2} \operatorname{proj}_{\mathfrak{m}} ([B,X]),
\end{equation}
where $\operatorname{proj}_{\mathfrak{m}}:\mathfrak{so}(n) \to \mathfrak{m}$ is the projection from $\mathfrak{so}(n) = \mathfrak{h} \oplus \mathfrak{m}$ to $\mathfrak{m}$. For example, if $d = 2$ and 
\[
B = \begin{bmatrix}
0 & B_{12} & B_{13} \\
-B_{12}^\tp  & 0 & B_{23} \\
-B_{13}^\tp  & -B_{23}^\tp  & 0
\end{bmatrix}\in\mathfrak{m},
\quad X = \begin{bmatrix}
0 & X_{12} & X_{13} \\
-X_{12}^\tp  & 0 & X_{23} \\
-X_{13}^\tp  & -X_{23}^\tp  & 0
\end{bmatrix}\in \mathfrak{m},
\]
where $B_{ij},X_{ij}\in \mathbb{R}^{(n_i-n_{i-1}) \times (n_j-n_{j-1})}, 1\le i < j \le 3$, then 
\[
\varphi_B(X) =  \begin{bmatrix}
0 & -B_{12} X_{23}^\tp  +  X_{12} B_{23}^\tp  & B_{11}X_{23} - X_{11}B_{23} \\
X_{23}B_{12}^\tp  -  B_{23} X_{12}^\tp  & 0 & -B_{11}X_{12}^\tp  + X_{11}B_{12}^\tp \\
-X_{23}^\tp  B_{11}^\tp  + B_{23}^\tp 
X_{11}^\tp  & X_{12} B_{11}^\tp  - B_{12} X_{11}^\tp   & 0
\end{bmatrix}\in \mathfrak{m}.
\]
\begin{proposition}[Parallel transport I]\label{prop:pt}
Let $B,X \in T_{\lb I \rb} \Flag(n_1,\dots, n_d; n) \cong  \mathfrak{m}$ and $\lb Q \rb \in \Flag(n_1,\dots, n_d; n)$. The parallel transport of $QX\in T_{\lb Q \rb}  \Flag(n_1,\dots, n_d; n)$ along the geodesic $\lb Q\exp(tB) \rb$ is
\begin{equation}\label{eq:pt}
X(t) = Q \exp(tB)  e^{-\varphi_{tB}} ( X ),
\end{equation}
where $e^{-\varphi_B}  : \mathfrak{m} \to \mathfrak{m}$, for $\varphi_B$ as in \eqref{eq:phi}, is defined by
\begin{equation}\label{eq:expphi}
e^{-\varphi_B} = \sum_{k=0}^\infty \frac{(-1)^k}{k!} \varphi_B^k.
\end{equation}
\end{proposition}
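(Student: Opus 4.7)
The plan is to exploit the fact that $M = \Flag(n_1,\dots,n_d;n)$ with the metric $g$ of Proposition~\ref{prop:metric} is a normal homogeneous space, hence naturally reductive. By $G$-invariance of the metric (Proposition~\ref{prop:metric inner product correspondence}) and the fact that left translation $L_Q : M\to M$ is an isometry carrying the geodesic $\lb\exp(tB)\rb$ to $\lb Q\exp(tB)\rb$ and the vector $X\in T_{\lb I\rb}M$ to $QX\in T_{\lb Q\rb}M$, parallel transport along these two geodesics is intertwined by $dL_Q$. Thus it suffices to establish the formula for $Q = I$; the general case follows by multiplying on the left by $Q$.

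Next, I would parameterize a candidate parallel transport along $\gamma(t)=\lb\exp(tB)\rb$ in the form $X(t) = \exp(tB)\,Y(t)$ with $Y(t)\in\mathfrak{m}$ a smooth curve satisfying $Y(0)=X$. Under the identification $T_{\lb\exp(tB)\rb}M \cong \exp(tB)\cdot\mathfrak{m}$ from Proposition~\ref{prop:tangent1}, such a curve indeed represents a smooth vector field along $\gamma$, and the goal becomes solving the parallel transport ODE $\nabla_{\gamma'(t)}X(t)=0$. Since our metric comes from a bi-invariant metric on $\SO(n)$, the reductive decomposition $\mathfrak{so}(n)=\mathfrak{h}\oplus\mathfrak{m}$ of Section~\ref{sec:ortho} satisfies the naturally reductive condition
\[
\langle [B,Y]_{\mathfrak{m}}, Z\rangle + \langle Y, [B,Z]_{\mathfrak{m}}\rangle = 0 \qquad \text{for all } B,Y,Z\in\mathfrak{m},
\]
which follows directly from the $\Ad$-invariance of the trace inner product \eqref{eq:ip1}. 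This lets me invoke the Kostant/Nomizu formula for the Levi-Civita connection on a naturally reductive homogeneous space along a one-parameter subgroup geodesic: the condition $\nabla_{\gamma'(t)}X(t)=0$ is equivalent to the linear ODE
\[
Y'(t) \;=\; -\tfrac{1}{2}[B,Y(t)]_{\mathfrak{m}} \;=\; -\varphi_B\bigl(Y(t)\bigr)
\]
on $\mathfrak{m}$, with initial condition $Y(0)=X$. Since $\varphi_B:\mathfrak{m}\to\mathfrak{m}$ is a linear endomorphism and $\varphi_{tB}=t\varphi_B$, the unique solution is $Y(t)=e^{-t\varphi_B}(X)=e^{-\varphi_{tB}}(X)$, yielding $X(t)=\exp(tB)\,e^{-\varphi_{tB}}(X)$ and hence \eqref{eq:pt} upon left-translating by $Q$.

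The key technical step, and the main potential obstacle, is the verification of the naturally reductive connection formula $(\nabla_{X^\ast}Y^\ast)_{\lb I\rb} = \tfrac{1}{2}[X,Y]_{\mathfrak{m}}$ (for $X,Y\in\mathfrak{m}$ viewed as generators of $G$-invariant vector fields) and its consequence for parallel transport along $\exp(tB)\cdot\lb I\rb$. One can either cite this as a standard consequence of natural reductivity, or derive it directly by checking the Koszul formula using the $\Ad_H$-invariance of $\langle\cdot,\cdot\rangle_{\mathfrak{m}}$, the skew-symmetry $\langle[B,Y]_{\mathfrak{m}},Z\rangle = -\langle Y,[B,Z]_{\mathfrak{m}}\rangle$ obtained from $\Ad$-invariance, and the metric-compatibility and torsion-freeness of $\nabla$. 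Once this infinitesimal identity at $\lb I\rb$ is in hand, translating it along $\gamma(t)$ via the isometry $L_{\exp(tB)}$ converts $\nabla_{\gamma'(t)}X(t)=0$ into the ODE above, and the explicit exponential series \eqref{eq:expphi} completes the proof.
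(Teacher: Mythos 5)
Your proposal is correct, but it takes a different route from the paper: the paper's entire proof is a one-line citation of \cite[Lemma~3.1]{Tojo1996}, which packages exactly the statement you derive --- parallel transport along one-parameter-subgroup geodesics in a naturally reductive homogeneous space --- whereas you reprove that lemma from scratch. Your reduction to $Q=I$ by the isometry $L_Q$, the ansatz $X(t)=\exp(tB)\,Y(t)$ with $Y(t)\in\mathfrak{m}$, and the observation that natural reductivity (automatic here, since $g$ is induced by the bi-invariant metric on $\SO(n)$ with $\mathfrak{m}=\mathfrak{h}^{\perp}$, so $\langle[B,Y]_{\mathfrak{m}},Z\rangle=-\langle Y,[B,Z]_{\mathfrak{m}}\rangle$ follows from $\Ad$-invariance of \eqref{eq:ip1}) reduces $\nabla_{\gamma'}X=0$ to the linear ODE $Y'=-\tfrac12[B,Y]_{\mathfrak{m}}=-\varphi_B(Y)$ are all sound; the solution $Y(t)=e^{-\varphi_{tB}}(X)$ then gives \eqref{eq:pt}, and the formula passes the obvious sanity checks (it fixes the velocity field since $\varphi_{tB}(B)=0$, reduces to $Q\exp(tB)X$ when $d=1$, and is an isometry because $\varphi_B$ is skew-adjoint on $\mathfrak{m}$). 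What your route buys is a self-contained argument that makes the role of $\varphi_B$ transparent, at the cost of having to justify the naturally reductive connection formula; the cleanest way to phrase that step is via the difference tensor between the Levi-Civita and canonical (Ambrose--Singer) connections, $D(B,Y)=\tfrac12[B,Y]_{\mathfrak{m}}$, transported by $G$-invariance along $\gamma$, rather than your parenthetical $(\nabla_{X^\ast}Y^\ast)_{\lb I\rb}=\tfrac12[X,Y]_{\mathfrak{m}}$: fundamental (Killing) fields of the $G$-action satisfy this with a minus sign owing to $[X^\ast,Y^\ast]=-[X,Y]^\ast$, and genuinely $G$-invariant vector fields on $G/H$ do not exist in general, so that sentence needs rewording even though the ODE you actually solve is the correct one.
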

\begin{proof}
This follows from applying \cite[Lemma~3.1]{Tojo1996} to $\Flag(n_1,\dots, n_d; n)$.
\end{proof}

For the $d= 1$ case, i.e., $\Flag(k;n) = \Gr(k,n)$, it is straightforward to verify that $[B,X]_\mathfrak{m} = 0$ for all $B,X\in \mathfrak{m}$. So the expression for parallel transport in \eqref{eq:pt} reduces to $X(t) = Q \exp(tB) X$, which is the well-known expression for parallel transport on the Grassmannian \cite{EAS}.

\subsection{Stiefel coordinates for the flag manifold}\label{sec:stief}

We next discuss the characterization of a flag manifold as a quotient of the Stiefel manifold \eqref{eq:homo3} and discuss  its consequences. This characterization will give our coordinates of choice for use in our optimization algorithms  (see Section~\ref{sec:gH}).
\begin{proposition}\label{prop:Stiefel}
Let $0 < n_1< \dots < n_d < n$ be  $d$ positive integers. The flag manifold $\Flag(n_1,\dots, n_d;n)$ is diffeomorphic to the homogeneous space
\begin{equation}\label{prop:Stiefel:eqn1}
\V(n_d,n) / \bigl(\O(n_1) \times \O(n_2-n_1) \times \dots \times \O(n_d- n_{d-1})\bigr)
\end{equation}
where $\V(n_d,n)$ is the Stiefel manifold of orthonormal $n_d$-frames in $n$. 
\end{proposition}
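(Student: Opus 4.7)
The plan is to construct the diffeomorphism explicitly, by sending a Stiefel frame to the flag of initial column-block spans, and then to identify the fibers of this map with the orbits of the stated block-diagonal subgroup. A more conceptual second route uses Proposition~\ref{prop:equivalent definition of flag manifold} together with the realization $\V(n_d,n)\cong\O(n)/\O(n-n_d)$ and reduces \eqref{prop:Stiefel:eqn1} to an iterated quotient in $\O(n)$; I will use the explicit route for existence and the iterated-quotient route to handle smoothness of the inverse.

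First I would define the map
\[
\pi:\V(n_d,n)\to\Flag(n_1,\dots,n_d;n),\qquad Y=[Y_1\mid Y_2\mid\cdots\mid Y_d]\mapsto\{\mathbb{V}_i\}_{i=1}^d,\quad \mathbb{V}_i\coloneqq\spn(Y_1,\dots,Y_i),
\]
where $Y_i\in\mathbb{R}^{n\times(n_i-n_{i-1})}$. Smoothness is clear (in any of the coordinate charts on the Grassmannian factors). Surjectivity follows from a Gram--Schmidt type construction: given a flag $\{\mathbb{V}_i\}_{i=1}^d$, pick an orthonormal basis $Y_1$ of $\mathbb{V}_1$, extend by an orthonormal basis $Y_2$ of $\mathbb{V}_2\ominus\mathbb{V}_1$, and so on; concatenation yields $Y\in\V(n_d,n)$ with $\pi(Y)=\{\mathbb{V}_i\}_{i=1}^d$.

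Next I would identify the fibers of $\pi$ with the orbits of the right action of $H\coloneqq\O(n_1)\times\O(n_2-n_1)\times\cdots\times\O(n_d-n_{d-1})$ on $\V(n_d,n)$, where $H$ is embedded in $\O(n_d)\subseteq\mathbb{R}^{n_d\times n_d}$ as block-diagonal matrices. If $h=\diag(h_1,\dots,h_d)\in H$, then $Yh=[Y_1h_1\mid\cdots\mid Y_dh_d]$, and since each $h_i$ is invertible on $\mathbb{R}^{n_i-n_{i-1}}$, we have $\spn(Y_1h_1,\dots,Y_ih_i)=\spn(Y_1,\dots,Y_i)$ for every $i$, so $\pi(Yh)=\pi(Y)$. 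Conversely, if $\pi(Y)=\pi(Y')$, then $Y_1$ and $Y_1'$ are both orthonormal bases of the same subspace $\mathbb{V}_1$, forcing $Y_1'=Y_1h_1$ for a unique $h_1\in\O(n_1)$; by induction, orthonormality of the full frames and the equality $\spn(Y_1,\dots,Y_i)=\spn(Y_1',\dots,Y_i')$ force $Y_i'=Y_ih_i$ for a unique $h_i\in\O(n_i-n_{i-1})$.

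Finally I would apply Theorem~\ref{thm:homogeneous structure} (or the standard quotient-manifold theorem for the free, proper, smooth action of the compact group $H$ on the compact manifold $\V(n_d,n)$) to conclude that $\pi$ descends to a bijective smooth map $\overline{\pi}:\V(n_d,n)/H\to\Flag(n_1,\dots,n_d;n)$. The main technical point, and the step I expect to be the most delicate, is verifying that $\overline{\pi}$ is a diffeomorphism, i.e., that $\pi$ is a surjective submersion. The cleanest way to do this is to note that $\V(n_d,n)\cong\O(n)/\O(n-n_d)$ under the embedding $\O(n-n_d)\hookrightarrow\O(n)$, $Q\mapsto\diag(I_{n_d},Q)$, and that $H$ (block-diagonal in the first $n_d$ rows and columns, identity elsewhere) commutes with this copy of $\O(n-n_d)$; their internal product inside $\O(n)$ is exactly $\O(n_1)\times\cdots\times\O(n-n_d)$, so
\[
\V(n_d,n)/H\;\cong\;\bigl(\O(n)/\O(n-n_d)\bigr)/H\;\cong\;\O(n)/\bigl(\O(n_1)\times\cdots\times\O(n-n_d)\bigr),
\]
and the right-hand side is diffeomorphic to $\Flag(n_1,\dots,n_d;n)$ by (the $\O(n)$ version of) Proposition~\ref{prop:equivalent definition of flag manifold}. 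Chasing identifications shows the resulting diffeomorphism coincides with $\overline{\pi}$, completing the proof.
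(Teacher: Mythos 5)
Your proof is correct, and its decisive step --- realizing $\V(n_d,n)\cong\O(n)/\O(n-n_d)$, identifying $\V(n_d,n)/\bigl(\O(n_1)\times\cdots\times\O(n_d-n_{d-1})\bigr)$ with $\O(n)/\bigl(\O(n_1)\times\cdots\times\O(n-n_d)\bigr)$ via the commuting subgroups, and then invoking the characterization \eqref{eq:homo1} --- is exactly the paper's (one-line) argument. The explicit quotient map $\pi$ and the fiber/orbit analysis you add are a helpful elaboration that the paper leaves implicit, but they do not change the route.
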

\begin{proof}
This follows from the standard characterization of $\V(n_d,n)$ is a homogeneous space of $O(n)$,
$\V(n_d,n) \cong  \O(n)/\O(n-n_d)$,  together with \eqref{eq:homo1}.
\end{proof}
For the rest of this article, we will regard the Stiefel manifold $\V(k,n)$ as the set of all $n\times k$ matrices whose column vectors are orthonormal. With this identification, Proposition~\ref{prop:Stiefel} allows us to represent a flag $\{ \mathbb{V}_i\}_{i=1}^d \in \Flag(n_1,\dots, n_d;n)$ by a  matrix $Y =[y_1,\dots,y_{n_d}] \in \mathbb{R}^{n\times n_d}$ with orthonormal $y_1,\dots,y_{n_d} \in \mathbb{R}^n$ and where the first $n_i$ of them span the subspace $\mathbb{V}_i$, $i=1,\dots, d$. This representation is not unique but if $Y'\in \mathbb{R}^{n\times n_d} $ is another such matrix, then 
\begin{equation}\label{eq:rep}
Y' = Y \begin{bsmallmatrix}
Q_1 & 0 & \dots & 0\\
0 & Q_2 &\dots & 0 \\
\vdots & \vdots & \ddots & \vdots \\
0 & 0 &\dots & Q_{d}
\end{bsmallmatrix},
\quad Q_i \in \O(n_i - n_{i-1}), \quad i =1,\dots, d.
\end{equation}
Hence $\{ \mathbb{V}_i\}_{i=1}^d \in \Flag(n_1,\dots,n_d;n)$ may be represented by the equivalence class of matrices
\begin{equation}\label{eq:equiv2}
\lb Y \rb = \left\lbrace
Y \begin{bsmallmatrix}
Q_1 & 0 & \dots & 0\\
0 & Q_2 &\dots & 0 \\
\vdots & \vdots & \ddots & \vdots \\
0 & 0 &\dots & Q_d
\end{bsmallmatrix} \in \mathbb{R}^{n \times n_d}:
\begin{multlined}
Y\in \V(n_d,n), \;
\spn\{y_1,\dots,y_{n_i}\} =  \mathbb{V}_i, \\
Q_i \in \O(n_i - n_{i-1}), \; i =1,\dots, d
\end{multlined}
\right\rbrace.
\end{equation}
We will call such a representation \emph{Stiefel coordinates} for the flag manifold.

In the following, for any $k < n$, we write
\[
I_{n,k} \coloneqq \begin{bmatrix}
I_{k} \\
0 
\end{bmatrix} \in \mathbb{R}^{n\times k},
\]
i.e., the $n \times k$ matrix comprising the first $k$ columns of the $n \times n$ identity matrix $I_n$. Thus for any $A =[a_1,\dots, a_n] \in \mathbb{R}^{n \times n}$, $AI_{n,k} = [a_1,\dots,a_k] \in  \mathbb{R}^{n \times k}$ gives us the first $k$ columns of $A$.

For a flag $\{ \mathbb{V}_i\}_{i=1}^d$, it is easy to convert between its orthogonal coordinates, i.e., $\lb Q \rb$ in \eqref{eq:equivalence class} with  $Q\in \O(n)$, and its Stiefel coordinates, i.e., $\lb Y \rb$ in \eqref{eq:equiv2} with  $Y \in \V(n_d,n)$. Given $Q \in \O(n)$, one just takes its first $n_d$ columns  to get $Y = QI_{n,n_d}$; note that $QI_{n,n_i}$ is automatically an orthonormal basis for the subspace $\mathbb{V}_i$, $i=1,\dots, d$. Given $Y \in \V(n_d,n)$, take any orthonormal basis $Y^\perp \in \V(n-n_d, n)$ of the orthogonal complement of $\im(Y)$  to get $Q = [Y, Y^\perp]\in \O(n)$. 

We now derive expressions for tangent space, metric, arclength, geodesic, and parallel transport in  Stiefel coordinates.
\begin{proposition}[Tangent space II]\label{prop:tangent2a}
Let $\lb Y \rb\in \Flag(n_1,\dots, n_d;n) = \V(n_d,n) / \bigl(\O(n_1) \times \O(n_2-n_1) \times \dots \times \O(n_d- n_{d-1})\bigr)$ be represented by $Y\in \V(n_d, n)$. Its tangent space at $\lb Y \rb$  is given by
\[
T_{\lb Y \rb}\Flag(n_1,\dots,n_d;n) = \lbrace  [Y,Y^{\perp}] B I_{n,n_d} \in \mathbb{R}^{n \times n_d}: B\in \mathfrak{m}\rbrace,
\]
where $Y^{\perp} \in \V(n-n_d,n) $ is such that $[Y,Y^{\perp}] \in \O(n)$ and $\mathfrak{m}$ is as in \eqref{eq:m}.
\end{proposition}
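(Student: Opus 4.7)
The plan is to deduce this tangent space description from the orthogonal-coordinates description in Proposition \ref{prop:tangent1}, by transporting it through the natural diffeomorphism between the two quotient presentations of $\Flag(n_1,\dots,n_d;n)$ established by Proposition \ref{prop:Stiefel}.

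First I would fix representatives. Given $Y \in \V(n_d,n)$, choose any orthonormal completion $Y^\perp \in \V(n-n_d,n)$ so that $Q \coloneqq [Y, Y^\perp] \in \O(n)$; then $Y = Q I_{n, n_d}$. The map on representatives $Q \mapsto Q I_{n,n_d}$ descends to a well-defined smooth map
\[
\Psi : \O(n) / \bigl(\O(n_1) \times \cdots \times \O(n_d - n_{d-1}) \times \O(n - n_d)\bigr) \longrightarrow \V(n_d, n) / \bigl(\O(n_1) \times \cdots \times \O(n_d - n_{d-1})\bigr),
\]
because the extra $\O(n - n_d)$ factor on the left acts only on the last $n - n_d$ columns of $Q$, which are discarded upon multiplying by $I_{n, n_d}$. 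By Proposition \ref{prop:Stiefel}, both sides represent the flag manifold, and so $\Psi$ is a diffeomorphism.

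Next I would compute $d \Psi_{\lb Q\rb}$. The smooth map $\pi : \O(n) \to \V(n_d, n)$, $Q \mapsto Q I_{n, n_d}$, is a submersion with differential $QA \mapsto QA I_{n,n_d}$ for $A \in \mathfrak{so}(n)$. Passing to the quotients, $d \Psi_{\lb Q\rb}$ is the induced map on tangent spaces; it restricts to the map $QB \mapsto QB I_{n,n_d}$ on $Q\mathfrak{m}$, which is an isomorphism onto the target tangent space, since no nonzero $B \in \mathfrak{m}$ has all of its first $n_d$ columns zero (as is easily read off from the block structure in \eqref{eq:m}) and the two dimensions coincide.

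Combining these ingredients with Proposition \ref{prop:tangent1}, which identifies $T_{\lb Q\rb} \Flag(n_1, \dots, n_d; n) = \{QB : B \in \mathfrak{m}\}$, yields the desired formula $T_{\lb Y\rb} \Flag(n_1,\dots,n_d;n) = \{[Y, Y^\perp] B I_{n,n_d} : B \in \mathfrak{m}\}$. The main thing to verify carefully is the well-definedness and smoothness of $\Psi$, together with the identification of its differential with the formula above; as a sanity check one should also observe that the right-hand set is independent of the choice of $Y^\perp$, which holds because replacing $Y^\perp$ by $Y^\perp R$ with $R \in \O(n - n_d)$ only left-multiplies the bottom $n - n_d$ rows of each $B I_{n, n_d}$ by $R$, a transformation that preserves the full set as $B$ varies over $\mathfrak{m}$.
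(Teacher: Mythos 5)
Your proof is correct, but it takes a genuinely different route from the paper. The paper's own proof works directly in Stiefel coordinates: it differentiates a curve $\tau(t)$ in $\Flag(n_1,\dots,n_d;n)$ with $\tau(0)=\lb Y\rb$, observes that the resulting velocity is orthogonal to the vertical directions generated by $\mathfrak{h}$ in \eqref{eq:h}, and identifies the horizontal space with $\mathfrak{m}$, deferring the detailed computation to the reference \cite{NSP2006}. You instead transport the already-established orthogonal-coordinates description of Proposition~\ref{prop:tangent1} through the canonical identification $\lb Q\rb \mapsto \lb QI_{n,n_d}\rb$ between the $\O(n)$-quotient and the $\V(n_d,n)$-quotient, compute the differential of this map on the horizontal space $Q\mathfrak{m}$, and check injectivity plus a dimension count. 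This is legitimate and arguably more self-contained than the paper's proof, since it leans on Proposition~\ref{prop:tangent1} rather than an external reference, and it matches the paper's own later remarks (following Corollary~\ref{prop:tangent2b}) on converting tangent vectors between orthogonal and Stiefel coordinates; what the paper's curve-differentiation argument buys in exchange is independence from the orthogonal-coordinates picture and a description intrinsic to the Stiefel presentation. Two small points to tighten: the assertion that $\Psi$ is a diffeomorphism should not be inferred merely from ``both sides represent the flag manifold'' --- rather, it holds because the diffeomorphism of Proposition~\ref{prop:Stiefel} is itself obtained from \eqref{eq:homo1} via $\V(n_d,n)\cong \O(n)/\O(n-n_d)$, so $\Psi$ is exactly the composite of those identifications; and your injectivity check (no nonzero $B\in\mathfrak{m}$ has vanishing first $n_d$ columns) is fine but, once $\Psi$ is known to be a diffeomorphism, already redundant, since $d\Psi_{\lb Q\rb}$ is automatically an isomorphism of tangent spaces. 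Neither point is a genuine gap.
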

\begin{proof}
The  calculation is straightforward and details can be found in \cite{NSP2006}. Essentially it follows from differentiating a curve $\tau(t)$ in $\Flag(n_1,\dots,n_d;n)$ with $\tau(0) = \lb Y \rb$ and noting that the tangent vector $\tau'(0)$ is perpendicular to $\mathfrak{h}$ in \eqref{eq:h}, whose orthogonal complement is precisely $\mathfrak{m}$.
\end{proof}
The description of $T_{\lb Y \rb}\Flag(n_1,\dots,n_d;n) $ in Proposition~\ref{prop:tangent2a} is a parametric one (like the description of the unit circle as $\{(\cos \theta, \sin \theta) : \theta \in [0,2\pi)\}$). We may also derive an  implicit description of $T_{\lb Y \rb}\Flag(n_1,\dots,n_d;n) $  (like the description of the unit circle as $\{(x,y) : x^2 + y^2 = 1\}$).
\begin{corollary}[Tangent space III]\label{prop:tangent2b}
Let $\lb Y \rb\in \Flag(n_1,\dots, n_d;n) = \V(n_d,n) / \bigl(\O(n_1) \times \O(n_2-n_1) \times \dots \times \O(n_d- n_{d-1})\bigr)$ be represented by $Y\in \V(n_d, n)$. 
Let $Y$ be partition as
\[
Y = [Y_1,\dots, Y_d], \quad Y_i \in \V(n_i - n_{i-1},n), \quad i=1,\dots, d.
\]
Then its tangent space  at $\lb Y \rb$  is given by
\begin{multline}\label{eq:tangent space}
T_{\lb Y \rb}\Flag(n_1,\dots,n_d;n) = \lbrace  [X_1,\dots,X_d]\in \mathbb{R}^{n\times n_d} : X_i\in \mathbb{R}^{n \times (n_i - n_{i-1})}, \\
Y_i^\tp  X_j + X_i^\tp  Y_j = 0,\; Y_i^\tp  X_i = 0,\; 1\le i, j \le d  \rbrace.
\end{multline}
Equivalently, the matrix $[X_1,\dots, X_d]$ can be  expressed as
\[
[X_1,\dots, X_d] = [Y_1,\dots,Y_d,Y^\perp] \begin{bsmallmatrix}
0 & B_{1,2} & \dots & B_{1,d} \\
-B_{1,2}^\tp  & 0 & \dots & B_{2,d} \\
\vdots & \dots & \ddots & \vdots \\
-B_{1,d}^\tp  & -B_{2,d}^\tp  & \dots & 0  \\
-B_{1,d+1}^\tp  & -B_{2,d+1}^\tp  & \dots & -B_{d,d+1}^\tp   
\end{bsmallmatrix},
\]
where $Y^{\perp} \in \V(n-n_d,n) $ is such that $[Y,Y^{\perp}] \in \O(n)$ and $B_{ij}\in \mathbb{R}^{(n_i-n_{i-1})\times (n_j-n_{j-1})}$, $1\le i<j\le d+1$.
\end{corollary}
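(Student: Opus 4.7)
The plan is to regard this corollary as a matter of translating the parametric description of $T_{\lb Y \rb}\Flag(n_1,\dots,n_d;n)$ from Proposition~\ref{prop:tangent2a} into both the implicit description \eqref{eq:tangent space} and the explicit block-matrix form. Essentially everything is bookkeeping about the block partitioning of $\mathfrak{m}$.

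First, I would partition the ambient orthogonal matrix as $[Y,Y^\perp] = [Y_1,\dots,Y_d,Y^\perp]$ with $Y_i \in \V(n_i - n_{i-1}, n)$ and $Y^\perp \in \V(n - n_d, n)$, and write $B \in \mathfrak{m}$ in its $(d+1)\times(d+1)$ block form from \eqref{eq:m} with off-diagonal blocks $B_{ij} \in \mathbb{R}^{(n_i-n_{i-1})\times(n_j-n_{j-1})}$ for $1 \le i < j \le d+1$. Right-multiplication by $I_{n,n_d}$ then simply extracts the first $d$ block-columns of $B$, so $[Y,Y^\perp] B I_{n,n_d} = [X_1,\dots,X_d]$ where each $X_j$ is a linear combination of $Y_1,\dots,Y_d,Y^\perp$ with coefficients that read off directly from the $j$-th block-column of $B$. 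This immediately yields the explicit expression of $[X_1,\dots,X_d]$ as $[Y_1,\dots,Y_d,Y^\perp]$ times the stated $(d+1)\times d$ block matrix.

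Next, I would establish the implicit description \eqref{eq:tangent space} in both directions. For the forward inclusion, starting from $X_j = \sum_{i<j} Y_i B_{ij} - \sum_{j<i\le d} Y_i B_{j,i}^\tp - Y^\perp B_{j,d+1}^\tp$ and using $Y_k^\tp Y_l = \delta_{kl} I_{n_k-n_{k-1}}$ and $Y_k^\tp Y^\perp = 0$, a quick computation shows $Y_k^\tp X_j$ equals $B_{kj}$ if $k < j$, equals $-B_{j,k}^\tp$ if $k > j$, and equals $0$ if $k = j$; adding $Y_i^\tp X_j$ to $(Y_j^\tp X_i)^\tp$ verifies $Y_i^\tp X_j + X_i^\tp Y_j = 0$ and the diagonal case gives $Y_i^\tp X_i = 0$. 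For the reverse inclusion, given $[X_1,\dots,X_d]$ satisfying the relations, I would define $B_{ij} \coloneqq Y_i^\tp X_j$ for $1\le i<j\le d$ and $B_{i,d+1} \coloneqq -X_i^\tp Y^\perp$ and observe that, because $Y_i^\tp X_i = 0$ and the relations above, expanding $X_j$ in the orthonormal basis $[Y_1,\dots,Y_d,Y^\perp]$ recovers the required combination, producing a $B \in \mathfrak{m}$ realizing the tangent vector via Proposition~\ref{prop:tangent2a}.

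The main obstacle, and indeed the only real work, is keeping the signs and transposes in the skew-symmetric block structure of $\mathfrak{m}$ straight; once the indexing conventions are fixed, both the implicit characterization and the block matrix expression fall out of a single careful computation of $[Y_1,\dots,Y_d,Y^\perp] B I_{n,n_d}$. No new geometric input beyond Proposition~\ref{prop:tangent2a} is needed.
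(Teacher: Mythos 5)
Your proposal is correct, and it follows the same general route as the paper (everything flows from the parametric description in Proposition~\ref{prop:tangent2a}), but it is more complete in one respect worth noting. The paper obtains the first relation in \eqref{eq:tangent space} by differentiating the Stiefel constraints $Y_i^\tp Y_i = I$, $Y_i^\tp Y_j = 0$ along a curve, obtains the second relation $Y_i^\tp X_i = 0$ by appealing to the form $[Y,Y^\perp]BI_{n,n_d}$ with $B \in \mathfrak{m}$, and leaves the converse inclusion (that every matrix satisfying the relations is genuinely a tangent vector) implicit. You instead derive both relations, together with the explicit block expression, purely algebraically from the parametric form by computing $Y_k^\tp X_j$ (which equals $B_{kj}$, $-B_{jk}^\tp$, or $0$ according as $k<j$, $k>j$, or $k=j$), and then you prove the reverse inclusion explicitly by reconstructing $B \in \mathfrak{m}$ from $X$ via $B_{ij} \coloneqq Y_i^\tp X_j$ for $i<j\le d$ and $B_{i,d+1} \coloneqq -X_i^\tp Y^\perp$, expanding each $X_j$ in the orthonormal basis $[Y_1,\dots,Y_d,Y^\perp]$. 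What your route buys is genuine set equality in \eqref{eq:tangent space} without any dimension count; what the paper's differentiation step buys is a direct explanation of the first relation as the linearization of the orthonormality constraints, but your computation recovers the same relations anyway. No gaps.
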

\begin{proof}
Since $ [Y_1,\dots, Y_d] \in \V(n_d,n)$ and $ Y_i \in \V(n_i - n_{i-1},n)$, the $Y_i$'s are characterized by 
\begin{equation}\label{eq:tangent space2}
Y_i^\tp Y_i = I_{n_i - n_{i-1}},\quad Y_i^\tp Y_j = 0,\quad i\ne  j =1,\dots, d.
\end{equation}
Differentiating \eqref{eq:tangent space2} gives us the first relation in \eqref{eq:tangent space}. On the other hand, by Proposition~\ref{prop:tangent2a} we notice that a tangent vector in $T_{\lb Y \rb}\Flag(n_1,\dots,n_d;n)$ is written as $[Y,Y^\perp] B I_{n,n_d}$ for some $B\in \mathfrak{m}$, from which we may easily verify the second relation in \eqref{eq:tangent space}.
\end{proof}

Comparing Propositions~\ref{prop:tangent1} and \ref{prop:tangent2a}, for a tangent vector $QB \in T_{\lb Q \rb} \Flag(n_1,\dots,n_d;n)$ in orthogonal coordinates $Q \in \O(n)$, its corresponding tangent vector in Stiefel coordinates $Y = Q I_{n,n_d} \in \V(n_d,n)$ is simply given by $QB I_{n,n_d} \in T_{\lb Y \rb} \Flag(n_1,\dots,n_d;n)$. Conversely, $[Y,Y^{\perp}] B I_{n,n_d} \in T_{\lb Y \rb} \Flag(n_1,\dots,n_d;n)$ in Stiefel coordinates corresponds to $QB \in T_{\lb Q \rb} \Flag(n_1,\dots,n_d;n)$ in orthogonal coordinates where $Q = [Y,Y^{\perp}]$. Note that from the matrix $B I_{n,n_d}$, i.e., just the first $n_d$ columns of $B \in \mathfrak{m}$, the full matrix $B$ can be easily and uniquely recovered by its skew symmetry.

The straightforward translation between orthogonal and Stiefel coordinate representations of points and tangent vectors on a flag manifold allows us to immediately deduce analogues of Propositions~\ref{prop:metric}, \ref{prop:geodesics1}, \ref{prop:pt}, and Corollary~\ref{cor:arc-length}.

\begin{proposition}[Riemannian metric II]\label{prop:metric2}
The metric $g$ at a point $\lb Y \rb \in \Flag(n_1,\dots,n_d;n) = \V(n_d,n) / \bigl(\O(n_1) \times \O(n_2-n_1) \times \dots \times \O(n_d- n_{d-1})\bigr)$ is given by 
\begin{equation}\label{eq:g2}
g_{\lb Y \rb}(W,Z) = \sum_{1\le i < j \le d+1} \tr(B_{ij}^\tp C_{ij}),
\end{equation}
where $W,Z\in T_{\lb Y \rb} \Flag(n_1,\dots,n_d;n)$ are
\[
W = [Y,Y^\perp] \begin{bsmallmatrix}
0 & B_{1,2}  &\dots  &B_{1,d} \\
-B_{1,2}^\tp  & 0   &\dots& B_{2,d} \\
\vdots & \vdots & \ddots  &\vdots\\
-B_{1,d}^\tp & -B_{2,d}^\tp  & \cdots  & 0\\
-B_{1,d+1}^\tp  & -B_{2,d+1}^\tp  &  \dots & -B_{d,d+1}^\tp
\end{bsmallmatrix},\quad  Z = [Y,Y^\perp]\begin{bsmallmatrix}
0 & C_{1,2}  &\dots  &C_{1,d} \\
-C_{1,2}^\tp  & 0   &\dots& C_{2,d} \\
\vdots & \vdots & \ddots  &\vdots\\
-C_{1,d}^\tp & -C_{2,d}^\tp  & \cdots  & 0\\
-C_{1,d+1}^\tp  & -C_{2,d+1}^\tp  &  \dots & -C_{d,d+1}^\tp
\end{bsmallmatrix} \in \mathbb{R}^{n \times n_d}.
\]
\end{proposition}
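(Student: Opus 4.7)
The plan is to piggyback on Proposition~\ref{prop:metric}, which already gives the metric in orthogonal coordinates, and use the explicit dictionary between orthogonal and Stiefel coordinates laid out in the paragraph just before the statement: if $Q = [Y, Y^\perp] \in \O(n)$, then $Y = QI_{n,n_d}$, and a Stiefel tangent vector $W = [Y, Y^\perp] B I_{n,n_d}$ at $\lb Y \rb$ corresponds to the orthogonal tangent vector $QB$ at $\lb Q \rb$. Since $\Flag(n_1, \dots, n_d; n)$ and the metric $g$ on it are the same manifold and the same Riemannian metric regardless of which coordinate system we use to parametrize them, we must have $g_{\lb Y \rb}(W, Z) = g_{\lb Q \rb}(QB, QC)$ for any $B, C \in \mathfrak{m}$ giving rise to Stiefel tangent vectors $W, Z$.

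The substantive step is then to verify that the block data $\{B_{ij} : 1 \le i < j \le d+1\}$ appearing in the Stiefel description of $W$ coincides with the block data of $B \in \mathfrak{m}$ appearing in the orthogonal description. Partitioning $B \in \mathfrak{m}$ according to row/column sizes $(n_1, n_2 - n_1, \dots, n - n_d)$ and taking the first $n_d$ columns (i.e., multiplying on the right by $I_{n,n_d}$) keeps the upper-triangular blocks $B_{i,j}$ for $1 \le i < j \le d$ in their $(i,j)$ positions and places $-B_{i,d+1}^\tp$ in the $(d+1, i)$ position for $i = 1, \dots, d$. Comparing this with the displayed form of $W$ shows that the $B_{ij}$ appearing in \eqref{eq:g2} are exactly the blocks of the skew-symmetric matrix $B$ and similarly for the $C_{ij}$ and $C$. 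Proposition~\ref{prop:metric} then gives $g_{\lb Q \rb}(QB, QC) = \sum_{1 \le i < j \le d+1} \tr(B_{ij}^\tp C_{ij})$, which is precisely \eqref{eq:g2}.

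There is essentially no obstacle; the argument is just a bookkeeping check that the first $n_d$ columns of $B \in \mathfrak{m}$ still contain the full set of blocks $\{B_{ij} : 1 \le i < j \le d+1\}$, because skew symmetry folds the last block column of $B$ into the bottom block row of $BI_{n,n_d}$. The only mild subtlety worth flagging in the proof is that the choice of $Y^\perp$ is not unique, but any two such completions differ by right multiplication by an element of $\O(n - n_d)$, which sits inside the isotropy subgroup of \eqref{eq:homo1}; this acts trivially on the blocks $B_{ij}$ with $j \le d$ and conjugates $B_{i,d+1}$ by an orthogonal matrix, leaving $\tr(B_{i,d+1}^\tp C_{i,d+1})$ invariant. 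Hence the sum on the right-hand side of \eqref{eq:g2} is a well-defined function of $\lb Y \rb$, $W$, and $Z$.
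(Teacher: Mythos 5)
Your proof is correct and follows essentially the same route as the paper, which states this proposition as an immediate consequence of the dictionary between orthogonal and Stiefel coordinates ($W=[Y,Y^\perp]BI_{n,n_d}\leftrightarrow QB$ with $Q=[Y,Y^\perp]$, the full $B\in\mathfrak{m}$ being recoverable from $BI_{n,n_d}$ by skew-symmetry) together with Proposition~\ref{prop:metric}; your bookkeeping of the blocks and the well-definedness remark are fine, the only nitpick being that a change of $Y^\perp$ multiplies $B_{i,d+1}$ on the right by an orthogonal matrix rather than conjugating it (the blocks are rectangular), which still leaves $\tr(B_{i,d+1}^\tp C_{i,d+1})$ invariant as you conclude.
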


\begin{proposition}[Arclength II, Geodesics III]\label{prop:geodesic3a}
Let $\lb Y \rb \in \Flag(n_1,\dots,n_d;n)  = \V(n_d,n) / \bigl(\O(n_1) \times \O(n_2-n_1) \times \dots \times \O(n_d- n_{d-1})\bigr)$ and $g$ be the metric in \eqref{eq:g2}. Every geodesic $\gamma$ on  $\Flag(n_1,\dots, n_d;n)$ passing through $\lb Y \rb$ takes the form 
\[
\gamma(t) = \lb Y(t) \rb  = \left\lbrace
[Y,Y^\perp] \exp (tB) \begin{bsmallmatrix}
Q_1 & 0 & \dots & 0\\
0 & Q_2 &\dots & 0 \\
\vdots & \vdots & \ddots & \vdots \\
0 & 0 &\dots & Q_d\\
0 & 0 & \dots & 0
\end{bsmallmatrix} \in  \V(n_d,n) : \begin{multlined} Q_i\in \O(n_i - n_{i-1}),\\ i=1,\dots, d \end{multlined}
\right\rbrace,
\]
where $[Y,Y^\perp]\in \O(n)$ and $B\in \mathfrak{m}$. In particular, the arclength of $\gamma(t)$ is  
\[
\lVert \gamma(t) \rVert =  t \Bigl[  \sum\nolimits_{1\le i<j \le d+1} \tr (B_{ij}^\tp  B_{ij}) \Bigr]^{1/2}. 
\]
\end{proposition}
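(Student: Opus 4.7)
The plan is to deduce this result as a direct translation of Proposition~\ref{prop:geodesics1} (Geodesic I) and Corollary~\ref{cor:arc-length} (Arclength I) from the orthogonal to the Stiefel coordinate representation, exploiting the explicit conversion $Q \leftrightarrow Y = QI_{n,n_d}$ described in the paragraph preceding this proposition. Because the two representations are diffeomorphic and, by Proposition~\ref{prop:metric2}, the metric $g$ of \eqref{eq:g2} is obtained from the metric of \eqref{eq:metric2} under this diffeomorphism, geodesics correspond bijectively, so it is enough to rewrite the expression from Proposition~\ref{prop:geodesics1} in Stiefel coordinates.

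First, given $Y \in \V(n_d,n)$, pick $Y^\perp \in \V(n-n_d,n)$ with $[Y,Y^\perp] \in \O(n)$, and view $\lb [Y,Y^\perp]\rb$ as the orthogonal-coordinate representative of the same flag as $\lb Y\rb$. By Proposition~\ref{prop:geodesics1}, every geodesic of $\Flag(n_1,\dots,n_d;n)$ through this point is of the form
\[
\lb [Y,Y^\perp]\exp(tB)\,\operatorname{diag}(Q_1,\dots,Q_d,Q_{d+1})\rb,
\]
for some $B \in \mathfrak{m}$, with $Q_i \in \O(n_i-n_{i-1})$ for $i=1,\dots,d+1$ satisfying $\prod_i \det Q_i = 1$. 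Postmultiplying the representative matrix by $I_{n,n_d}$ to pass back to Stiefel coordinates extracts its first $n_d$ columns; the block $Q_{d+1} \in \O(n-n_d)$ acts only on the trailing $n-n_d$ columns and is therefore erased, together with the unit-determinant constraint, which becomes vacuous. What survives is exactly the block matrix displayed in the statement, and the resulting representative lies in $\V(n_d,n)$, so this yields the claimed parametric form of $\gamma(t)$.

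The arclength formula then follows at once: since $Y \mapsto [Y,Y^\perp]$ is an isometry between the two coordinate representations of $(\Flag(n_1,\dots,n_d;n),g)$, the arclength of $\gamma(t)$ coincides with the arclength of $\lb [Y,Y^\perp]\exp(tB)\rb$, which by Corollary~\ref{cor:arc-length} equals $t\bigl[\sum_{1\le i<j\le d+1}\tr(B_{ij}^\tp B_{ij})\bigr]^{1/2}$.

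The only real subtlety is the bookkeeping of which isotropy factors survive the truncation to the first $n_d$ columns; beyond this, no new differential-geometric input is needed beyond Propositions~\ref{prop:geodesics1}, \ref{prop:metric2}, and Corollary~\ref{cor:arc-length}. The point requiring genuine care is verifying that the coordinate change is a metric isomorphism, which ensures that no geodesics are lost or gained in translation and that arclengths are preserved verbatim; this is immediate by matching \eqref{eq:metric2} with \eqref{eq:g2} column-by-column.
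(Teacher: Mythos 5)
Your proposal is correct and matches the paper's intent exactly: the paper gives no separate proof, stating only that the translation between orthogonal and Stiefel coordinates ($Y = QI_{n,n_d}$, tangent vectors $QB \leftrightarrow QBI_{n,n_d}$) lets one deduce this result immediately from Proposition~\ref{prop:geodesics1} and Corollary~\ref{cor:arc-length}, which is precisely your argument. (A trivial remark: Proposition~\ref{prop:geodesics1} is stated for the $\O(n)$-quotient without the determinant constraint, but as you note the constraint is vacuous after truncation anyway, so nothing changes.)
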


\begin{proposition}[Parallel transport II]\label{prop:pt2}
Let $\lb Y \rb \in \Flag(n_1,\dots,n_d;n)$ and
\[
[Y,Y^{\perp}] B I_{n,n_d}, \; [Y,Y^{\perp}] X I_{n,n_d} \in T_{\lb Y \rb} \Flag(n_1,\dots,n_d;n).
\]
The parallel transport of $[Y,Y^{\perp}] X I_{n,n_d}$ along the geodesic $\lb [Y,Y^\perp] \exp (tB)I_{n,n_d}  \rb $ is given by
\begin{equation}\label{eq:pt2}
X(t) = [Y,Y^\perp]  \exp(tB)  e^{-\varphi_{tB}} ( X ) I_{n,n_d},
\end{equation}
with $e^{-\varphi_{tB}}$ defined as in \eqref{eq:phi} and \eqref{eq:expphi}.
\end{proposition}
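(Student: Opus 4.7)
The plan is to deduce this proposition from the parallel transport formula in orthogonal coordinates (Proposition~\ref{prop:pt}) by invoking the explicit dictionary between Stiefel and orthogonal coordinates that was laid out just before Proposition~\ref{prop:metric2}. Concretely, given $Y\in\V(n_d,n)$ and a completion $Y^\perp\in\V(n-n_d,n)$, the matrix $Q=[Y,Y^\perp]\in\O(n)$ represents the same flag $\lb Y\rb=\lb Q\rb$; a tangent vector $QX\in T_{\lb Q\rb}\Flag(n_1,\dots,n_d;n)$ corresponds under this identification to its first $n_d$ columns $QXI_{n,n_d}=[Y,Y^\perp]XI_{n,n_d}\in T_{\lb Y\rb}\Flag(n_1,\dots,n_d;n)$, and $X\in\mathfrak m$ is recoverable from $XI_{n,n_d}$ by its skew symmetry. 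The geodesic $\lb Q\exp(tB)\rb$ from Proposition~\ref{prop:geodesics1} transports to $\lb[Y,Y^\perp]\exp(tB)I_{n,n_d}\rb$ in Stiefel coordinates, which is exactly the curve appearing in Proposition~\ref{prop:geodesic3a}.

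First I would verify that the natural identification $\O(n)/\bigl(\O(n_1)\times\cdots\times\O(n-n_d)\bigr)\cong\V(n_d,n)/\bigl(\O(n_1)\times\cdots\times\O(n_d-n_{d-1})\bigr)$ given by $\lb Q\rb\mapsto\lb QI_{n,n_d}\rb$ is not just a diffeomorphism but a Riemannian isometry: both metrics are defined by the same bilinear form $(B,C)\mapsto\sum_{i<j}\tr(B_{ij}^\tp C_{ij})$ on the common model space $\mathfrak m$ (compare \eqref{eq:metric2} and \eqref{eq:g2}), so the tangent-space correspondence $QX\leftrightarrow QXI_{n,n_d}$ pulls one metric back to the other. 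Next I would observe that, because parallel transport along a curve is the unique solution to an ODE determined intrinsically by the Levi-Civita connection of the Riemannian metric, an isometry between two Riemannian manifolds intertwines their parallel transports. Applying this to the isometry above: the parallel transport of $QX$ along $\lb Q\exp(tB)\rb$, which by Proposition~\ref{prop:pt} equals $Q\exp(tB)\,e^{-\varphi_{tB}}(X)$, pushes forward under $V\mapsto VI_{n,n_d}$ to $[Y,Y^\perp]\exp(tB)\,e^{-\varphi_{tB}}(X)\,I_{n,n_d}$, which is precisely the asserted formula \eqref{eq:pt2}.

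The only nontrivial point is the isometry claim; once that is in hand everything else is formal. I would justify it by remarking that the projection map $\pi:\O(n)\to\V(n_d,n)$, $Q\mapsto QI_{n,n_d}$, is equivariant with respect to left multiplication by $\O(n)$ and descends to a well-defined map on quotients because $\O(n-n_d)$ acts trivially on the first $n_d$ columns; on tangent spaces $(d\pi)_{Q}$ sends $QB$ to $QBI_{n,n_d}$, which is injective on $\mathfrak m$ (since $B$ is recovered from its first $n_d$ columns by skew symmetry) and preserves the inner product block by block, as the entries $B_{i,d+1}$ of $\mathfrak m$ survive in $BI_{n,n_d}$ and all blocks $B_{ij}$ with $j\le d$ are read off from the upper-left $n_d\times n_d$ subblock. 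Hence the quotient map is an isometry, parallel transport is intertwined, and \eqref{eq:pt2} follows by directly reading off the first $n_d$ columns of the expression in \eqref{eq:pt}.
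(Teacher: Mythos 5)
Your proposal is correct and follows essentially the same route as the paper, which states Proposition~\ref{prop:pt2} as an immediate consequence of Proposition~\ref{prop:pt} via the translation between orthogonal and Stiefel coordinates; you simply make explicit the (true and worthwhile) observations that the identification $\lb Q\rb\mapsto\lb QI_{n,n_d}\rb$ is an isometry because \eqref{eq:metric2} and \eqref{eq:g2} are the same form on $\mathfrak m$, and that isometries intertwine parallel transport.
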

While is also straightforward to obtain analogues of Proposition~\ref{prop:geodesics2} and \ref{prop:geodist} in Stiefel coordinates, we omit them as the expressions are more involved and we will not need them in the sequel.

\section{Flag manifolds as matrix manifolds}\label{sec:mfld}

By Proposition~\ref{prop:geometric structure}\eqref{it:cls1} and \eqref{it:cls2}, we see that a flag manifold may be regarded as a submanifold of a product of Grassmannians. Since a Grassmannian can be represented as a subset of matrices in $\mathbb{R}^{n\times n}$ \cite[Example~1.2.20]{Nicolaescu},
\begin{equation}\label{eq:P}
\Gr(k,n) \cong \{P\in \mathbb{R}^{n\times n}: P^2 = P = P^\tp,\; \tr(P) = k\},
\end{equation}
so can a flag manifold; and we will discuss two different ways do this, corresponding to \eqref{it:cls1} and \eqref{it:cls2} in Proposition~\ref{prop:geometric structure}:
\begin{align*}
\Flag(n_1,\dots,n_d;n) &\subseteq \Gr(n_1,n)\times  \Gr(n_2,n) \times \dots \times \Gr(n_d,n),\\
\Flag(n_1,\dots,n_d;n) &\subseteq \Gr(n_1,n)\times  \Gr(n_2-n_1,n) \times \dots \times \Gr(n_d-n_{d-1},n).
\end{align*}

The correspondence in \eqref{eq:P} is given by a map that takes a $k$-dimensional subspace $\mathbb{W}\in \Gr(k,n)$ to its orthogonal projector,
\begin{equation}\label{eq:epsGr}
\varepsilon: \Gr(k,n) \to \mathbb{R}^{n\times n}, \quad \mathbb{W} \mapsto W W^\tp,
\end{equation}
where $W \in \mathbb{R}^{n \times k}$ is any orthonormal basis of $\mathbb{W}$. Note that if $W'$ is another such $n \times k$ matrix, then $W' = W Q$ for some $Q\in \O(k)$ and so $W'W^{\prime\tp} =WW^\tp$ and the map $\varepsilon$ is well-defined. It is also injective and its image is precisely the set on the right of \eqref{eq:P}.

\subsection{Projection coordinates for the flag manifold}\label{sec:proj}

We will construct our first analogue of  \eqref{eq:epsGr} for the  flag manifold. Let
\begin{equation}\label{eq:eps}
\varepsilon : \Flag(n_1,\dots,n_d;n) \to \mathbb{R}^{nd\times nd}, \quad
\{\mathbb{V}_i\}_{i=1}^d \mapsto \diag (V_1V_1^\tp , \dots, V_d V_d^\tp  ),
\end{equation}
where $V_i \in \mathbb{R}^{n\times n_i}$ is an orthonormal basis of $\mathbb{V}_i$, $i=1,\dots, d$, and the image is a block-diagonal matrix in $\mathbb{R}^{nd \times nd}$ with $d$ blocks  $V_1V_1^\tp , \dots, V_d V_d^\tp \in \mathbb{R}^{n \times n}$. In fact, the map in \eqref{eq:eps} is essentially the map in \eqref{eq:j} that we used to establish Proposition~\ref{prop:geometric structure}\eqref{it:cls1} except that we identify the Grassmannians with sets of projection matrices as in \eqref{eq:P}.

\begin{proposition}\label{prop:representation of flags}
The flag manifold $\Flag(n_1,\dots, n_d;n)$  is diffeomorphic to 
\begin{equation}\label{eq:Pflag}
\{P = \diag(P_1,\dots,P_d)\in \mathbb{R}^{nd\times nd}: P_i^2 = P_i = P_i^\tp,\; \tr(P_i) = n_i, \; P_j P_i = P_i, \; i<j \}.
\end{equation}
\end{proposition}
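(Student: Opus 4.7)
The plan is to verify that the map $\varepsilon$ in \eqref{eq:eps} is a well-defined diffeomorphism onto the set described in \eqref{eq:Pflag}, leveraging the fact that the analogous statement for the Grassmannian in \eqref{eq:P} is already known and that $\Flag(n_1,\dots,n_d;n)$ embeds as a closed submanifold of the product $\Gr(n_1,n)\times \cdots \times \Gr(n_d,n)$ by Proposition~\ref{prop:geometric structure}\eqref{it:cls1}.

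First I would verify that $\varepsilon$ is well-defined, i.e., the block $V_iV_i^\tp$ depends only on the subspace $\mathbb{V}_i$ and not on the choice of orthonormal basis $V_i$: if $V_i'$ is another orthonormal basis of $\mathbb{V}_i$ then $V_i' = V_i Q_i$ for some $Q_i \in \O(n_i)$ and hence $V_i' V_i^{\prime\tp} = V_i Q_i Q_i^\tp V_i^\tp = V_i V_i^\tp$. Next I would check that the image of $\varepsilon$ lies in the set \eqref{eq:Pflag}. Each $P_i \coloneqq V_iV_i^\tp$ is the orthogonal projector onto $\mathbb{V}_i$, so $P_i^2 = P_i = P_i^\tp$ and $\tr(P_i) = \tr(V_i^\tp V_i) = \tr(I_{n_i}) = n_i$. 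The nesting condition $\mathbb{V}_i \subseteq \mathbb{V}_j$ for $i < j$ translates into $P_jP_i = P_i$: for any $v\in \mathbb{R}^n$, $P_iv \in \mathbb{V}_i\subseteq \mathbb{V}_j$, so $P_j$ fixes it.

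Next I would establish the inverse map. Given $P = \diag(P_1,\dots,P_d)$ in the set \eqref{eq:Pflag}, set $\mathbb{V}_i \coloneqq \im(P_i)$. Since $P_i$ is an orthogonal projector, $\dim \mathbb{V}_i = \rank(P_i) = \tr(P_i) = n_i$. The relation $P_jP_i = P_i$ for $i<j$ forces $\im(P_i) = \im(P_jP_i) \subseteq \im(P_j)$, i.e.\ $\mathbb{V}_i\subseteq \mathbb{V}_j$, so $\{\mathbb{V}_i\}_{i=1}^d$ is indeed a flag of type $(n_1,\dots,n_d)$. This construction is clearly two-sided inverse to $\varepsilon$, establishing bijectivity.

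Finally, for the diffeomorphism claim, the componentwise Grassmannian embedding $\mathbb{W}\mapsto WW^\tp$ of \eqref{eq:epsGr} is a known diffeomorphism between $\Gr(n_i,n)$ and its image of rank-$n_i$ projectors. Taking the product of these $d$ embeddings and composing with the block-diagonal inclusion $\mathbb{R}^{n\times n}\times \cdots \times \mathbb{R}^{n\times n} \hookrightarrow \mathbb{R}^{nd\times nd}$, we get a smooth embedding of $\Gr(n_1,n)\times \cdots \times \Gr(n_d,n)$ into $\mathbb{R}^{nd\times nd}$. Restricting to the closed submanifold $\Flag(n_1,\dots,n_d;n)$ from Proposition~\ref{prop:geometric structure}\eqref{it:cls1} gives exactly $\varepsilon$, which is therefore a smooth embedding whose image is characterized by the nesting translated into $P_jP_i=P_i$. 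The only mildly nontrivial point is verifying that the nesting condition on subspaces is captured \emph{exactly} by the algebraic equations $P_jP_i = P_i$ (not a weaker or stronger condition), but this is immediate from the two directions noted above, so no substantive obstacle arises.
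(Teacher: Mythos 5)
Your proof is correct and follows essentially the same route as the paper: check that $\varepsilon$ lands in the set \eqref{eq:Pflag}, exhibit $P \mapsto \{\im(P_i)\}_{i=1}^d$ as the inverse, and inherit smoothness from the Grassmannian projector embedding together with the closed-submanifold structure of Proposition~\ref{prop:geometric structure}\eqref{it:cls1}. The paper's proof is just a terser statement of the same argument, so your additional verifications (well-definedness, rank-from-trace, and the equivalence of nesting with $P_jP_i = P_i$) simply fill in the details the paper leaves to the reader.
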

\begin{proof}
One may check that $\varepsilon$ in \eqref{eq:eps} has its image in contained in the set \eqref{eq:Pflag}; and the map that takes $P= \diag(P_1,\dots,P_d)$ to the flag $\{\im(P_i)\}_{i=1}^d \in \Flag(n_1,\dots, n_d;n) $ is its inverse.
\end{proof}
We will call the representation in Proposition~\ref{prop:representation of flags} \emph{projection coordinates} for the flag manifold. Unlike the orthogonal and Stiefel coordinates introduced earlier, which are not unique, projection coordinates are unique.
Let $\{\mathbb{V}_i\}_{i=1}^d \in \Flag(n_1,\dots, n_d;n)$ with
\begin{enumerate}[\upshape (a)]
\item\label{it:o} orthogonal coordinates $\lb Q \rb$ for some $Q  \in \O(n)$;
\item\label{it:s} Stiefel coordiantes $\lb Y \rb$ for some $Y \in \V(n_d, n)$;
\item\label{it:p} projection coordinates  $P$ as in \eqref{eq:Pflag}.
\end{enumerate}
We have seen how we may easily convert between orthogonal and Stiefel coordinates after \eqref{eq:equiv2}, we now see how they may be interchanged with projection coordinates just as easily:
\begin{itemize}
\item[\eqref{it:o}$\to$\eqref{it:p}:] Given $Q=[q_1,\dots,q_n] \in \O(n)$, let $Q_i = [q_1,\dots, q_{n_i}] \in \V(n_i, n)$; then $P_i = Q_i Q_i^\tp $, $i = 1,\dots,d$. 

\item[\eqref{it:s}$\to$\eqref{it:p}:] Given $Y = [y_1,\dots,y_{n_d}] \in \V(n_d, n)$, let $Y_i = [y_1,\dots, y_{n_i}] \in \V(n_i, n)$; then $P_i = Y_i Y_i^\tp $, $i = 1,\dots,d$.

\item[\eqref{it:p}$\to$\eqref{it:s}:] Given $P =\diag(P_1,\dots,P_d)$, let  $y_1,\dots, y_{n_{i}}$ be an orthonormal basis of $\im(P_i)$; then $Y_i= [y_1,\dots, y_{n_i}] \in \V(n_i, n)$, $i=1,\dots, d$.

\item[\eqref{it:p}$\to$\eqref{it:o}:]  As above but appending an orthonormal basis  $y_{n_d+1},\dots, y_{n}$ of $\im(P_d)^\perp$ gives us $Q = [y_1,\dots, y_{n_d}, y_{n_d+1},\dots, y_{n}] \in \O(n)$.
\end{itemize}

As is the case for the Grassmannian, the flag manifold has several extrinsic coordinates systems with which differential geometric objects and operations have closed-form analytic expressions and  where one coordinate representation can be transformed to another with relative ease. This flexibility to switch between coordinate systems can be exploited in computations but as we will see next, it can also be exploited in deriving the requisite analytic expressions.

\begin{proposition}[Tangent spaces IV]\label{prop:tangent2}
Let $P = \diag(P_1,\dots,P_d) \in \Flag(n_1,\dots, n_d;n)$ as represented in \eqref{eq:Pflag}.
Then the tangent space is given by
\begin{multline}\label{eq:tangent2}
T_P\Flag(n_1,\dots,n_d;n) = \lbrace Z = \diag(Z_1,\dots,Z_d) \in \mathbb{R}^{nd\times nd} :
Z_i P_i + P_i Z_i= Z_i = Z_i^\tp,\\  \tr(Z_i) = 0, \; Z_j P_i + P_j Z_i = Z_i,\; i<j, \; i,j=1,\dots, d. \rbrace
\end{multline}
\end{proposition}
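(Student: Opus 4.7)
The plan has two parts. For the inclusion $\subseteq$, I differentiate each defining relation of the projection-coordinate realization from Proposition~\ref{prop:representation of flags}. Take any smooth curve $P(t) = \diag(P_1(t),\dots,P_d(t))$ in the flag manifold with $P(0) = P$, set $Z = P'(0) = \diag(Z_1,\dots,Z_d)$, and evaluate the derivatives at $t=0$: the relation $P_i^2 = P_i$ gives $Z_iP_i + P_iZ_i = Z_i$; the relation $P_i = P_i^\tp$ gives $Z_i = Z_i^\tp$; the relation $\tr(P_i) = n_i$ gives $\tr(Z_i) = 0$; and the nesting $P_jP_i = P_i$ for $i<j$ gives $Z_jP_i + P_jZ_i = Z_i$. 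This places every tangent vector in $T_P\Flag(n_1,\dots,n_d;n)$ inside the candidate set on the right of \eqref{eq:tangent2}.

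For the reverse inclusion the cleanest route is through the diffeomorphism $\varepsilon$ of Proposition~\ref{prop:representation of flags} together with the orthogonal-coordinate description of the tangent space from Proposition~\ref{prop:tangent1}. Pick $Q \in \O(n)$ with $\varepsilon(\lb Q \rb) = P$ and push forward a tangent vector $QB$ with $B \in \mathfrak{m}$ by differentiating the curve $P_i(t) = Q\exp(tB)\,E_i\exp(-tB)\,Q^\tp$, where $E_i \coloneqq I_{n,n_i}I_{n,n_i}^\tp$. This yields $d\varepsilon_{\lb Q \rb}(QB) = \diag(Z_1,\dots,Z_d)$ with $Z_i = Q(BE_i - E_iB)Q^\tp$, and a direct calculation using $B^\tp = -B$, $E_i^2 = E_i = E_i^\tp$, and $E_jE_i = E_i$ for $i<j$, verifies that every such $Z$ satisfies all four conditions on the right of \eqref{eq:tangent2}. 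Since $\varepsilon$ is a diffeomorphism, $d\varepsilon_{\lb Q\rb}$ is a linear isomorphism onto $T_P\Flag(n_1,\dots,n_d;n)$, so the argument closes once the candidate set is shown to have dimension $\sum_{i<j}(n_i - n_{i-1})(n_j - n_{j-1}) = \dim\mathfrak{m}$.

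The main obstacle is this dimension count. Passing to a basis adapted to the flag (after conjugating by $Q$ so that $P_i = E_i$) and partitioning matrices into blocks of sizes $n_1,\, n_2-n_1,\, \dots,\, n_d-n_{d-1},\, n-n_d$, the symmetry $Z_i^\tp = Z_i$ combined with $Z_iP_i + P_iZ_i = Z_i$ forces $Z_i$ to vanish on both the $\im(P_i)$-diagonal and the $\im(P_i)^\perp$-diagonal blocks (and makes the trace condition automatic), so $Z_i$ is supported only on the $(a,b)$-blocks with $a \leq i < b$ together with their transposes. The cross-nesting $Z_jP_i + P_jZ_i = Z_i$ for $i<j$ then pins down $(Z_j)_{ab} = (Z_i)_{ab}$ whenever $b \leq i < j < a$, with the symmetric identification for the transpose blocks, so that for each pair $a<b$ the family $\{(Z_i)_{ab} : a \leq i < b\}$ collapses to a single free block of size $(n_a - n_{a-1})\times(n_b - n_{b-1})$. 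Summing over pairs $a<b$ in $\{1,\dots,d+1\}$ yields exactly $\sum_{a<b}(n_a - n_{a-1})(n_b - n_{b-1})$ independent parameters, which matches $\dim\mathfrak{m}$ and completes the proof.
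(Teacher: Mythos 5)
Your argument is correct, and it is strictly more complete than the paper's. The paper's entire proof of this proposition is your first paragraph: it takes a curve $\gamma(t)=\diag(P_1(t),\dots,P_d(t))$ in the set \eqref{eq:Pflag}, differentiates the defining relations $P_i^2=P_i=P_i^\tp$, $\tr(P_i)=n_i$, $P_jP_i=P_i$ at $t=0$, and declares this ``the required description''; in other words it only exhibits the inclusion $T_P\Flag(n_1,\dots,n_d;n)\subseteq\{Z:\dots\}$ and leaves the reverse inclusion (equivalently, the fact that the linearized constraints cut out a space of dimension exactly $\dim\Flag$) implicit. You supply that missing half in two ways that reinforce each other: pushing forward $\mathfrak{m}$ under $\varepsilon$ via $P_i(t)=Q\exp(tB)E_i\exp(-tB)Q^\tp$, giving $Z_i=Q(BE_i-E_iB)Q^\tp$, which lands in the candidate set and shows $T_P$ has dimension $\dim\mathfrak{m}$ there; and the block analysis of the linear constraints, which is the real content. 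That analysis is right: $Z_i=Z_i^\tp$ together with $Z_iP_i+P_iZ_i=Z_i$ kills both diagonal blocks relative to the split at $n_i$ (so the trace condition is indeed automatic), and the cross conditions $Z_jP_i+P_jZ_i=Z_i$ identify $(Z_i)_{ab}=(Z_j)_{ab}$ for all $a\le i<j<b$ (in the transposed formulation $b'\le i<j<a'$), collapsing each pair $(a,b)$, $a<b$, to one free $(n_a-n_{a-1})\times(n_b-n_{b-1})$ block and giving total dimension $\sum_{a<b}(n_a-n_{a-1})(n_b-n_{b-1})=\dim\mathfrak{m}$, which closes the equality. What the paper's route buys is brevity, at the cost of tacitly assuming the differentiated relations are exactly the tangent space; what yours buys is a self-contained proof of both inclusions that also re-derives the dimension formula of Proposition~\ref{prop:tangent1} and makes explicit the compatibility of projection coordinates with the orthogonal-coordinate tangent space, at the cost of the block bookkeeping.
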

\begin{proof}
Let $\gamma(t)$ be a curve in $\Flag(n_1,\dots,n_d;n)$ as characterized by \eqref{eq:Pflag}, i.e., $\gamma : (-1, 1) \to \mathbb{R}^{nd\times nd}$, $t \mapsto  \diag\bigl(P_1(t),\dots, P_d(t)\bigr)$ where
\begin{equation}\label{eq: curve in flag1}
P_i(t)^2 = P_i(t), \; P_i(t)^\tp  = P_i(t),\; \tr(P_i(t)) = n_i,\; P_j(t)P_i(t) = P_i(t),\; i<j, \; i,j=1,\dots,d,
\end{equation}
for all $t\in (-1,1)$. Taking derivatives of these relations at $t = 0$ gives the required description.
\end{proof}

Again the ease of translation from orthogonal and Stiefel coordinates to projection coordinates yields counterparts of Proposition~\ref{prop:metric}--\ref{prop:pt} readily. We will just provide expressions for geodesic and parallel transport as examples.

\begin{proposition}[Geodesics IV]\label{prop:geodesics3}
Let $P = \diag(P_1,\dots,P_d) \in \Flag(n_1,\dots, n_d;n)$ be as represented in \eqref{eq:Pflag} and $Z = \diag(Z_1,\dots,Z_d)\in T_P\Flag(n_1,\dots, n_d;n)$ be as represented in  \eqref{eq:tangent2}. Then there exist  $Y \in \V(n_d, n)$ and skew-symmetric $B \in \mathbb{R}^{n \times n}$ such that for $Y_i = YI_{n,n_i}$, $B_i =BI_{n,n_i} \in \mathbb{R}^{n \times n_i}$,
\begin{equation}\label{eq:pyb}
P_i = Y_i Y_i^\tp, \quad Z_i = Y_i B_i^\tp + B_iY_i^\tp, \quad i=1,\dots,d;
\end{equation}
and a geodesic $P(t)$ passing through $P$ in the direction $Z$  takes the form
\begin{equation}\label{eq:geo3}
\{\diag \bigl(P_1(t),\dots, P_d(t)\bigr) \in \mathbb{R}^{nd \times nd} : P_i(t) = Y_i(t) Y_i(t)^\tp, \; Y_i(t) = [Y,Y^\perp] \exp(tB)I_{n,n_i}\}.
\end{equation}
\end{proposition}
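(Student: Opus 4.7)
The strategy is to reduce the proposition to its Stiefel-coordinate analogue (Proposition~\ref{prop:geodesic3a}) by transferring everything through the explicit Stiefel-to-projection conversion given after Proposition~\ref{prop:representation of flags}. In short: represent $P$ and $Z$ in Stiefel coordinates, apply the known geodesic formula there, then push forward.

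First I will choose a Stiefel representative of $P$. Applying the projection-to-Stiefel conversion listed after Proposition~\ref{prop:representation of flags}, I pick an adapted orthonormal basis $y_1,\dots,y_{n_d}$ with $y_1,\dots,y_{n_i}$ an orthonormal basis of $\im(P_i)$, set $Y = [y_1,\dots,y_{n_d}]\in\V(n_d,n)$, and complete to $[Y,Y^\perp]\in\O(n)$ by any orthonormal basis of $\im(P_d)^\perp$. Writing $Y_i$ for the first $n_i$ columns of $Y$ (equivalently $[Y,Y^\perp]I_{n,n_i}$) gives $P_i = Y_i Y_i^\tp$, which is the first identity in \eqref{eq:pyb}. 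Next I lift $Z$ to a Stiefel tangent vector. The smooth map $\V(n_d,n)\to\mathbb{R}^{nd\times nd}$, $Y\mapsto\diag(Y_1 Y_1^\tp,\dots,Y_d Y_d^\tp)$, factors through the quotient $\V(n_d,n)/H'\cong\Flag(n_1,\dots,n_d;n)$ (Proposition~\ref{prop:Stiefel}, with $H' = \O(n_1)\times\cdots\times\O(n_d-n_{d-1})$) composed with the projection-coordinate diffeomorphism of Proposition~\ref{prop:representation of flags}, so its differential at $Y$ is surjective onto $T_P\Flag(n_1,\dots,n_d;n)$. Hence $Z$ has a horizontal lift $\dot Y\in T_Y\V(n_d,n)$, and by Proposition~\ref{prop:tangent2a} we may write $\dot Y = [Y,Y^\perp]BI_{n,n_d}$ for some skew-symmetric $B\in\mathfrak m$. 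Differentiating $P_i(t) = Y_i(t)Y_i(t)^\tp$ at $t=0$ then yields $Z_i = \dot Y_i Y_i^\tp + Y_i\dot Y_i^\tp$, which is the second identity of \eqref{eq:pyb} with $B_i = BI_{n,n_i}$.

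Finally I invoke Proposition~\ref{prop:geodesic3a}: the Stiefel geodesic through $\lb Y\rb$ with initial direction $\dot Y$ is $Y(t) = [Y,Y^\perp]\exp(tB)I_{n,n_d}$, whose first $n_i$ columns give $Y_i(t) = [Y,Y^\perp]\exp(tB)I_{n,n_i}$. Pushing this curve forward by the diffeomorphism of Proposition~\ref{prop:representation of flags} produces $P_i(t) = Y_i(t)Y_i(t)^\tp$, which is \eqref{eq:geo3}. The main point to verify is that this pushforward is genuinely a geodesic in the projection model, not merely a smooth curve with the right initial 1-jet. This is most cleanly handled intrinsically: both the Stiefel and projection coordinate systems describe the same Riemannian homogeneous space $\SO(n)/\S(\O(n_1)\times\cdots\times\O(n-n_d))$ equipped with the canonical $G$-invariant metric of Proposition~\ref{prop:metric}, so the Stiefel-to-projection diffeomorphism is automatically an isometry and sends geodesics to geodesics. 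An extrinsic verification by direct computation of $\ddot P_i(t)$ and checking that it lies in the normal bundle characterized via \eqref{eq:tangent2} is also possible but significantly more tedious, and is the step I would expect to be the only real obstacle if one insists on avoiding the homogeneous-space argument.
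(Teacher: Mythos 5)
Your argument is correct and follows essentially the same route as the paper's proof: convert $P$ to Stiefel coordinates via the \eqref{it:p}$\to$\eqref{it:s} dictionary, invoke Proposition~\ref{prop:geodesic3a} for the geodesic $\lb [Y,Y^\perp]\exp(tB)I_{n,n_d}\rb$, and translate back, differentiating $P_i = Y_iY_i^\tp$ to obtain $Z_i = Y_iB_i^\tp + B_iY_i^\tp$. The only difference is that you make explicit two points the paper leaves implicit --- the existence of the lift $B\in\mathfrak{m}$ of $Z$ and the fact that the Stiefel-to-projection identification is an isometry (so geodesics map to geodesics) --- both of which are correct and consistent with the paper's framework.
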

\begin{proof}
The matrix $Y$ is just $P$ in Stiefel coordinates and may be obtained from \eqref{it:p}$\to$\eqref{it:s} above.  By Proposition~\ref{prop:geodesic3a}, in Stiefel coordinates, the geodesic through $\lb Y \rb$ in  direction $[Y,Y^{\perp}] B I_{n,n_d}$ is
\[
\lb [Y,Y^{\perp}]  \exp(tB) I_{n,n_d} \rb.
\]
By \eqref{it:s}$\to$\eqref{it:p},  $Y$ and $P$ are related by
$P_i = Y_iY_i^\tp$, $i=1,\dots,d$, which upon differentiation gives $Z_i = Y_i B_i^\tp + B_i Y_i^\tp$. The required expression \eqref{eq:geo3} then follows.
\end{proof}
The observant reader might have noticed that $B_{d+1}$ does not appear in \eqref{eq:pyb} --- the reason is that since $B$ is skew-symmetric, $B_{d+1}$ is uniquely determined by $B_1,\dots, B_d$. 
\begin{proposition}[Parallel transport III]\label{prop:pt1}
Let $P$, $Z$, $Y$, $B$,  and $P(t)$ be as in Proposition~\ref{prop:geodesics3}. Let $Y^\perp \in \V(n-n_d, n)$ be such that $[Y, Y^\perp] \in \O(n)$ and set
\begin{equation}\label{eq:geo4}
Y_i(t)=[Y, Y^\perp] \exp(tB)I_{n,n_i}, \quad X_i(t) = [Y, Y^\perp] \exp(tB)  e^{-\varphi_{tB}} ( X ) I_{n,n_i},  \quad i=1,\dots, d.
\end{equation}
Then the parallel transport of the tangent vector $Z$ along the geodesic $P(t)$ is given by 
\begin{equation}\label{eq:pt3}
Z(t) = \diag \bigl(Z_1(t),\dots, Z_d(t)\bigr),\quad Z_i(t) = Y_i(t) X_i(t)^\tp + X_i(t) Y_i(t)^\tp,\quad i=1,\dots, d.
\end{equation}
\end{proposition}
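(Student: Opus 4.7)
The plan is to deduce this result by transporting the formula in Stiefel coordinates (Proposition~\ref{prop:pt2}) through the diffeomorphism that converts Stiefel coordinates into projection coordinates. Since parallel transport is an intrinsic operation that depends only on the Riemannian metric, it commutes with isometries; and the conversion $\lb Y \rb \mapsto \diag(Y_1 Y_1^\tp,\dots,Y_d Y_d^\tp)$ discussed after Proposition~\ref{prop:representation of flags} realizes such an isometry between the Stiefel and projection coordinate representations of $\Flag(n_1,\dots,n_d;n)$.

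First I would make explicit the pushforward of a tangent vector under this conversion. If $Y(s)$ is a smooth curve in $\V(n_d,n)$ with $Y(0) = Y$ and $\dot Y(0) = [Y,Y^\perp] X I_{n,n_d}$, then differentiating the relation $P_i(s) = Y_i(s) Y_i(s)^\tp$ at $s = 0$ yields $\dot P_i = \dot Y_i Y_i^\tp + Y_i \dot Y_i^\tp$, where $\dot Y_i = [Y,Y^\perp] X I_{n,n_i}$ is the first $n_i$ columns of $\dot Y$. In particular, the tangent vector $Z$ in \eqref{eq:pyb} is exactly the image under this differential of the Stiefel tangent vector corresponding to $B$, so one may invoke Proposition~\ref{prop:geodesic3a} and Proposition~\ref{prop:pt2} to track both the geodesic and the transported vector in Stiefel coordinates.

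Next I would apply Proposition~\ref{prop:pt2} to obtain the parallel transport in Stiefel coordinates:
\[
X(t) = [Y,Y^\perp]\exp(tB)\, e^{-\varphi_{tB}}(X)\, I_{n,n_d},
\]
moving along the Stiefel geodesic $\lb [Y,Y^\perp]\exp(tB)I_{n,n_d} \rb$, whose image under the Stiefel-to-projection map is precisely the geodesic $P(t)$ from Proposition~\ref{prop:geodesics3}. Pushing $X(t)$ forward through the differential of the coordinate change at $\lb Y(t) \rb$ gives the projection-coordinate tangent vector with blocks $\dot Y_i(t)\, Y_i(t)^\tp + Y_i(t)\, \dot Y_i(t)^\tp$. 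Taking the first $n_i$ columns to define $Y_i(t)$ and $X_i(t)$ as in \eqref{eq:geo4} yields \eqref{eq:pt3}.

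The main obstacle is justifying that parallel transport is preserved under the Stiefel-to-projection diffeomorphism, i.e., that this diffeomorphism is an isometry with respect to the metrics induced on each representation. This is really a consequence of the fact that both representations are just different extrinsic coordinatizations of the same intrinsic Riemannian flag manifold: the metrics on each are pulled back from the unique $\SO(n)$-invariant metric on $\Flag(n_1,\dots,n_d;n)$ fixed in Proposition~\ref{prop:metric}. Once naturality of parallel transport under isometries is invoked, the rest of the argument is a bookkeeping exercise in differentiating $P_i(t) = Y_i(t)Y_i(t)^\tp$ and reading off the first $n_i$ columns, which is exactly what \eqref{eq:pt3} records.
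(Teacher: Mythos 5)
Your proposal is correct and follows essentially the same route as the paper's own proof: convert between Stiefel and projection coordinates via $P_i = Y_iY_i^\tp$, differentiate to relate the tangent vectors, and then push the Stiefel-coordinate parallel transport formula of Proposition~\ref{prop:pt2} through this conversion. Your explicit remark that the coordinate change is an isometry (both being representations of the same Riemannian metric of Proposition~\ref{prop:metric}) so that parallel transport is preserved is a point the paper leaves implicit, but it is the same argument.
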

\begin{proof}
As in the proof of Proposition~\ref{prop:geodesics3}, we obtain the corresponding projection coordinates $P = \diag(P_1,\dots, P_d)$, $P_i = Y_i Y_i^\tp$, $Y_i = YI_{n,n_i}$, $i=1,\dots, d$. Differentiating these relations give a tangent vector $Z = \diag (Z_1,\dots, Z_d)\in T_P \Flag(n_1,\dots,n_d;n)$ in projection coordinates as
\[
Z_i = Y_i X_i^\tp + X_iY_i^\tp,\quad i=1,\dots, d,
\]  
where $X\in T_{\lb Y \rb} \Flag(n_1,\dots,n_d;n)$ is the expression of the same tangent vector in Stiefel coordinates as in Proposition~\ref{prop:tangent2a} and $X_i = XI_{n,n_i}$, $i=1,\dots, d$. The required expressions \eqref{eq:geo4} and \eqref{eq:pt3} then follow from the expression \eqref{eq:pt2}  for parallel transport in terms of Stiefel coordinates $Y$.
\end{proof}

\subsection{Refined projection coordinates for the flag manifold}\label{sec:rproj}

We discuss a variation of projection coordinates on flag manifolds based on  Proposition~\ref{prop:geometric structure}\eqref{it:cls2}. As in Section~\ref{sec:proj}, if we identify the Grassmannians as sets of projection matrices as in \eqref{eq:P}, then the map in \eqref{eq:j2} becomes
\begin{equation}\label{eq:embedding of flag}
\varepsilon': \Flag(n_1,\dots,n_d;n) \to \mathbb{R}^{nd \times nd},\quad \{\mathbb{V}_i\}_{i=1}^d \mapsto \diag(W_1W_1^\tp,\dots, W_dW_d^\tp),
\end{equation}
where column vectors of $W_i\in \mathbb{R}^{n\times (n_i - n_{i-1})}$ form an orthonormal basis of $\mathbb{V}_{i-1}^{\perp}$, the orthogonal complement of $\mathbb{V}_{i-1}$ in $\mathbb{V}_i$, $i =1,\dots,d$. This gives us another description of $\Flag(n_1,\dots,n_d;n)$ as a matrix manifold, an analogue of Proposition~\ref{prop:representation of flags}.
\begin{proposition}\label{prop:reduced coord}
The flag manifold $\Flag(n_1,\dots,n_d;n)$ is diffeomorphic to
\begin{equation}\label{eq:Rflag}
\lbrace
R = \diag(R_1,\dots, R_d)\in \mathbb{R}^{nd \times nd}: R_i^2 = R_i = R_i^\tp,\; \tr (R_i) = n_i - n_{i-1},\; R_i  R_j = 0,\;  i < j
\rbrace.
\end{equation}
\end{proposition}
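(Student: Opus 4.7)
The plan is to show that the map $\varepsilon'$ in \eqref{eq:embedding of flag} is a diffeomorphism onto the set in \eqref{eq:Rflag}, following exactly the same template as the proof of Proposition~\ref{prop:representation of flags} but replacing the nested projectors $V_iV_i^\tp$ by the projectors $W_iW_i^\tp$ onto the successive orthogonal complements $\mathbb{V}_i\cap \mathbb{V}_{i-1}^\perp$.

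First I would verify that $\varepsilon'$ lands in the set \eqref{eq:Rflag}. For each $i$, the matrix $R_i = W_iW_i^\tp$ is the orthogonal projector onto the $(n_i-n_{i-1})$-dimensional subspace $\mathbb{V}_i\cap \mathbb{V}_{i-1}^\perp$, so it is symmetric, idempotent, and has trace $n_i-n_{i-1}$. For $i<j$, the subspaces $\mathbb{V}_i\cap \mathbb{V}_{i-1}^\perp$ and $\mathbb{V}_j\cap \mathbb{V}_{j-1}^\perp$ are mutually orthogonal because the second is contained in $\mathbb{V}_{j-1}^\perp \subseteq \mathbb{V}_i^\perp$; hence $W_j^\tp W_i=0$, which gives $R_iR_j = W_iW_i^\tp W_jW_j^\tp = 0$. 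As before, any two orthonormal bases of the same subspace differ by an element of the appropriate orthogonal group, so $R_i$ depends only on $\mathbb{V}_i\cap \mathbb{V}_{i-1}^\perp$, making $\varepsilon'$ well-defined.

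Next I would construct the inverse. Given $R = \diag(R_1,\dots,R_d)$ in the set \eqref{eq:Rflag}, taking transposes of $R_iR_j=0$ (for $i<j$) together with symmetry of each $R_k$ yields $R_jR_i=0$ as well, so the images $\im(R_i)$ are pairwise orthogonal subspaces of $\mathbb{R}^n$. Define
\[
\mathbb{V}_i \coloneqq \im(R_1)\oplus \im(R_2)\oplus \dots \oplus \im(R_i),\qquad i=1,\dots,d.
\]
Mutual orthogonality and $\dim \im(R_k) = \tr(R_k) = n_k-n_{k-1}$ give $\dim \mathbb{V}_i = n_i$ and the strict inclusions $\mathbb{V}_1\subsetneq \dots \subsetneq \mathbb{V}_d$, so $\{\mathbb{V}_i\}_{i=1}^d\in \Flag(n_1,\dots,n_d;n)$. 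By construction $\im(R_i) = \mathbb{V}_i\cap \mathbb{V}_{i-1}^\perp$, so this map inverts $\varepsilon'$.

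Finally I would check smoothness. The map $\varepsilon'$ is smooth because it factors as $\varepsilon'\circ \pi$, where $\pi:\V(n_d,n)\to \Flag(n_1,\dots,n_d;n)$ is the quotient map from Stiefel coordinates and the composite $Y\mapsto \diag(Y_1Y_1^\tp - 0, Y_2Y_2^\tp - Y_1Y_1^\tp, \dots)$ is polynomial in the entries of $Y$; since $\pi$ is a smooth submersion this forces $\varepsilon'$ to be smooth. For the inverse, it suffices to exhibit local smooth sections assigning orthonormal bases to the $\im(R_i)$, which is standard (e.g., via a smooth local choice from the spectral decomposition of $R_i$ as an orthogonal projector of fixed rank). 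The bijection is then a diffeomorphism onto its image. I expect the main subtlety to be the last point --- namely verifying that recovering subspaces from projectors is smooth in both directions --- but this is exactly the standard Grassmannian argument recalled in \eqref{eq:P} and \eqref{eq:epsGr}, applied componentwise.
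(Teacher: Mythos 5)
Your proposal is correct and follows essentially the same route the paper takes: the paper offers no separate argument for this proposition, relying on the map $\varepsilon'$ of \eqref{eq:embedding of flag} and the same check used for Proposition~\ref{prop:representation of flags}, namely that the image lies in \eqref{eq:Rflag} and that $R \mapsto \{\im(R_1)\oplus\dots\oplus\im(R_i)\}_{i=1}^d$ is the inverse. Your write-up simply supplies the details (orthogonality of the $\im(R_i)$, the dimension count via traces, and the smoothness discussion) that the paper leaves implicit.
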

We call the representation in Proposition~\ref{prop:reduced coord} \emph{reduced projection coordinates} on the flag manifold $\Flag(n_1,\dots,n_d;n)$. Again, it is straightforward to translate between the other three coordinates systems and reduced projection coordinates.
This readily yields expressions for metric, tangent space, geodesic, and parallel transport in reduced projection coordinates as before. We will state those for tangent space and metric as examples.
\begin{proposition}[Tangent spaces V]\label{prop:tangent3}
Let $R = \diag(R_1,\dots,R_d) \in \Flag(n_1,\dots, n_d;n)$ be as represented in \eqref{eq:Rflag}.
Then the tangent space is given by
\begin{multline}\label{eq:tangent3}
T_R\Flag(n_1,\dots,n_d;n) = \lbrace  Z =\diag(Z_1,\dots,Z_d) \in \mathbb{R}^{nd\times nd} :
R_i Z_i + Z_i R_i = Z_i = Z_i^\tp,\\ \tr(Z_i) = 0, \; Z_i R_j + R_i Z_j =0,\;1\le i < j \le d. \rbrace
\end{multline}
\end{proposition}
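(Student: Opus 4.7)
The plan is to directly mimic the proof of Proposition~\ref{prop:tangent2}, differentiating the four defining equations of the matrix model \eqref{eq:Rflag}. Let $\gamma(t) = \diag(R_1(t),\dots,R_d(t))$ be an arbitrary smooth curve in $\Flag(n_1,\dots,n_d;n)$, viewed through $\varepsilon'$ of \eqref{eq:embedding of flag} as living in \eqref{eq:Rflag}, with $\gamma(0) = R$, and set $Z_i \coloneqq R_i'(0)$. For all $t$ in a neighborhood of $0$,
\[
R_i(t)^2 = R_i(t), \quad R_i(t)^\tp = R_i(t), \quad \tr R_i(t) = n_i - n_{i-1}, \quad R_i(t) R_j(t) = 0 \text{ for } i<j.
\]
Differentiating these at $t = 0$ yields, respectively, $R_i Z_i + Z_i R_i = Z_i$, $Z_i^\tp = Z_i$, $\tr(Z_i) = 0$, and $Z_i R_j + R_i Z_j = 0$ for $i < j$, establishing the inclusion $\subseteq$ in \eqref{eq:tangent3}.

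For the reverse inclusion I would argue by dimension count. By Proposition~\ref{prop:tangent1}, $\dim T_R \Flag(n_1,\dots,n_d;n) = \sum_{1 \le i < j \le d+1}(n_i - n_{i-1})(n_j - n_{j-1})$, so it suffices to show the right-hand set of \eqref{eq:tangent3} has at most this dimension. Fix an orthonormal basis of $\mathbb{R}^n$ adapted to $R$, so that $\im(R_i)$ is spanned by basis vectors indexed $n_{i-1}+1$ through $n_i$ (and the complement of $\im(R_d)$ by the last $n - n_d$ of them). In this basis, the condition $Z_i = Z_i^\tp$ together with $R_i Z_i + Z_i R_i = Z_i$ is the standard Grassmannian tangent condition at $\im(R_i) \in \Gr(n_i - n_{i-1}, n)$, forcing $Z_i$ to vanish on the diagonal blocks cut out by the partition and to be determined by its $(i,k)$-blocks $M_i^{(k)} \in \mathbb{R}^{(n_i-n_{i-1})\times(n_k-n_{k-1})}$ for $k \ne i$ via symmetry. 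The coupling constraint $Z_i R_j + R_i Z_j = 0$ for $i < j \le d$ then equates $M_i^{(j)}$ with $-(M_j^{(i)})^\tp$ (after using the Grassmannian relation), so only one block is free per pair $1 \le i < j \le d$; the blocks $M_i^{(d+1)}$, which point into $\im(R_d)^\perp$, remain unconstrained by any coupling (since $R_{d+1}$ does not appear), supplying the $j = d+1$ contributions. Summing yields precisely the required dimension.

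The main obstacle will be the bookkeeping in the dimension count: one has to verify that the coupling constraints $Z_i R_j + R_i Z_j = 0$ are correctly interpreted block-by-block so as neither to double-count nor to overlook free parameters, and that the Grassmannian tangency on each block is compatible with these cross-constraints. If the direct count proves unwieldy, a cleaner alternative would be to exhibit surjectivity explicitly: given any $Z$ on the right-hand side of \eqref{eq:tangent3}, pass to a Stiefel representative $Y$ of $R$ via the transition \eqref{it:p}$\to$\eqref{it:s}, use Corollary~\ref{prop:tangent2b} to build $X \in T_{\lb Y \rb}\Flag(n_1,\dots,n_d;n)$, and verify by a direct calculation (parallel to the proof of Proposition~\ref{prop:pt1}) that $(d\varepsilon')(X) = Z$, so that $Z$ is genuinely a tangent vector.
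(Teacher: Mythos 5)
Your proposal is correct, and its first half is exactly the paper's (largely implicit) argument: the paper proves the projection-coordinate analogue, Proposition~\ref{prop:tangent2}, by differentiating the defining relations of the matrix model and simply asserts that the reduced-projection case \eqref{eq:Rflag} follows in the same way, which is your inclusion $\subseteq$. What you add beyond the paper is the converse: the paper never verifies that every solution of the linearized relations is actually a tangent vector, whereas you close this by a dimension count, and your bookkeeping is right --- in a basis adapted to $R$, each $Z_i$ is supported on block row/column $i$ with vanishing $(i,i)$ block (so $\tr Z_i=0$ is automatic), the coupling $Z_iR_j+R_iZ_j=0$ forces $M_i^{(j)}=-(M_j^{(i)})^\tp$ for $1\le i<j\le d$, and the blocks against the $(d+1)$st subspace are free, giving $\sum_{1\le i<j\le d+1}(n_i-n_{i-1})(n_j-n_{j-1})$, matching Proposition~\ref{prop:tangent1}. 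Your alternative route (pushing forward Stiefel tangent vectors under $\varepsilon'$) is closer in spirit to the paper's stated strategy of translating between coordinate systems; either way, your write-up is more complete than what the paper records.
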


Propositions~\ref{prop:reduced coord} and \ref{prop:tangent3} give an alternative way to obtain the metric  $g$ in Proposition~\ref{prop:metric}. Let $g_i$ be the standard metric on $\Gr(n_i-n_{i-1},n)$, $i=1,\dots,d$. Then it is straightforward to verify that $g$ is the pull-back of $\sum_{i=1}^d g_i$ via the embedding  \eqref{eq:j2} in Proposition~\ref{prop:geometric structure}\eqref{it:cls2}. This also gives us an expression for the metric in terms of reduced projection coordinates.
\begin{proposition}[Riemannian metric III]\label{prop:metric3}
Let $R = \diag(R_1,\dots, R_d) \in \Flag(n_1,\dots,n_d;n)$ be as in \eqref{eq:Rflag}.
Let $W= \diag(W_1,\dots,W_d),Z =\diag(Z_1,\dots,Z_d)\in T_R \Flag(n_1,\dots, n_d;n)$ be as in \eqref{eq:tangent3}. Then there exist $V_i, A_i, B_i\in \mathbb{R}^{n\times (n_i - n_{i-1})}$ such that
\begin{equation}\label{eq:VAB}
V_i V_i^\tp = R_i,\; V_i^\tp V_i = I_{n_i - n_{i-1}}, \;
V_i A_i^\tp + A_i V_i^\tp = W_i,\;
V_i^\tp A_i = 0,\;
V_i B_i^\tp + B_i V_i^\tp = Z_i,\;
V_i^\tp B_i = 0,
\end{equation}
and the metric $g$  is given by 
\[
g_R(W,Z) =  \sum_{i=1}^d   \tr(A_i^\tp B_i).
\]
\end{proposition}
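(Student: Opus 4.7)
The plan is to handle the existence of $V_i, A_i, B_i$ first, then derive the metric formula by pulling back the product Grassmannian metric through the embedding $\varepsilon'$ of Proposition~\ref{prop:reduced coord}.

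For existence, I would first note that $R_i$ is a symmetric idempotent of trace (and hence rank) $n_i - n_{i-1}$, so the spectral theorem produces $V_i \in \mathbb{R}^{n\times(n_i - n_{i-1})}$ with $V_i V_i^\tp = R_i$ and $V_i^\tp V_i = I_{n_i-n_{i-1}}$; I would fix any such $V_i$. The key algebraic step is the identity $R_i W_i R_i = 0$, which I would obtain by right-multiplying the relation $R_i W_i + W_i R_i = W_i$ from \eqref{eq:tangent3} by $R_i$ and using $R_i^2 = R_i$. Setting $A_i := W_i V_i$, the orthogonality $V_i^\tp A_i = V_i^\tp W_i V_i = 0$ follows by sandwiching $R_i W_i R_i = 0$ with $V_i^\tp$ on the left and $V_i$ on the right (using $V_i^\tp V_i = I$), while the decomposition $V_i A_i^\tp + A_i V_i^\tp = V_i V_i^\tp W_i + W_i V_i V_i^\tp = R_i W_i + W_i R_i = W_i$ follows from symmetry of $W_i$. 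The construction $B_i := Z_i V_i$ is identical.

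For the metric formula, I would invoke the remark preceding the proposition, namely that $g$ is the pullback via $\varepsilon'$ of the product metric $\sum_{i=1}^d g_i$ on $\prod_{i=1}^d \Gr(n_i - n_{i-1}, n)$. On a single Grassmannian $\Gr(k,n)$ at a point $P = V V^\tp$, a tangent vector takes the form $W = V A^\tp + A V^\tp$ with $V^\tp A = 0$, and the canonical metric in these projection coordinates evaluates to $\tr(A^\tp A')$. The check is routine: expand $\tfrac{1}{2}\tr(W W')$, use $V^\tp V = I$ and $V^\tp A = V^\tp A' = 0$; two of the four cross-terms vanish and the remaining two each contribute $\tr(A^\tp A')$. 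Summing this Grassmannian identity over $i=1,\dots,d$ and applying $\varepsilon'_{\ast}$ to the pair $(W,Z)$ produces the desired $g_R(W,Z) = \sum_{i=1}^d \tr(A_i^\tp B_i)$.

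The only non-mechanical step is spotting the identity $R_i W_i R_i = 0$; everything else is linear algebra forced by the symmetry and idempotency relations in \eqref{eq:Rflag} and \eqref{eq:tangent3}. I would also remark that the off-diagonal constraints $Z_i R_j + R_i Z_j = 0$ for $i<j$ in \eqref{eq:tangent3} play no role in the pointwise argument above; they are what ensure $(R_1,\dots,R_d)$ is a genuine flag, so that $\varepsilon'$ actually lands in $\Flag$, but the formula $g_R(W,Z) = \sum_i \tr(A_i^\tp B_i)$ is evaluated one fibre of $\varepsilon'$ at a time and therefore factors through each Grassmannian independently.
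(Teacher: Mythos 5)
Your proof is correct and follows essentially the same route as the paper's: reduce to the $d=1$ (Grassmannian) case and pull back the product metric $\sum_i g_i$ through $\varepsilon'$, so that $g_R(W,Z)=\sum_i \tr(A_i^\tp B_i)$. The only difference is that you spell out the linear algebra the paper delegates to its earlier coordinate discussions — constructing $A_i = W_iV_i$, $B_i = Z_iV_i$ via the identity $R_iW_iR_i=0$ and verifying $\tfrac12\tr(W_iZ_i)=\tr(A_i^\tp B_i)$ — which is a welcome amount of extra detail but not a different argument.
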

\begin{proof}
As the Grassmannian is just a flag manifold with $d=1$, all our earlier discussions about Stiefel and projection coordinates also apply to it. So for $W_i,Z_i \in T_{R_i} \Gr(n_i-n_{i-1},n)$ in projection coordinates, there exist $V_i, A_i, B_i\in \mathbb{R}^{n\times (n_i - n_{i-1})}$ satisfying \eqref{eq:VAB}.
The standard Riemannian metric $g_i$ on $\Gr(n_i-n_{i-1},n)$ at $R_i$ is then  given by $g_i (W_i,Z_i) =  \tr( A_i^\tp B_i)$ and thus we have 
\[
g_R(W,Z) = \sum_{i=1}^d g_i (W_i,Z_i) = \sum_{i=1}^d   \tr(A_i^\tp B_i). \qedhere
\]
\end{proof}

\section{Riemannian Gradient and Hessian over the flag manifold}\label{sec:gH}

We will derive expressions for the Riemannian gradient and Riemannian Hessian of a real-valued function on a flag manifold, the main ingredients of optimization algorithms.  Although in principle we may use any of the four extrinsic coordinate systems introduced in the last two sections --- orthogonal (as $n \times n$ orthogonal matrices), Stiefel (as $n \times n_d$ orthonormal matrices), projection or reduced projection (as $d$-tuples of $n \times n$ projection matrices) coordinates ---  Stiefel coordinates give  the most economical representation and we will use this as our coordinates of choice. So in the following we will identify
\begin{equation}\label{eq:identify}
\Flag(n_1,\dots,n_d;n) = \V(n_d,n) / \bigl(\O(n_1) \times \O(n_2-n_1) \times \dots \times \O(n_d- n_{d-1})\bigr).
\end{equation}
Our expressions for gradient and Hessian in Stiefel coordinates may of course be converted to other coordinates --- straightforward although the results may be notationally messy.
\begin{proposition}[Riemannian gradient]\label{prop:gradient}
Let $f:\Flag(n_1,\dots,n_d;n) \to \mathbb{R}$ be a smooth function expressed in Stiefel coordinates $Y \in \V(n_d, n)$. Define the $n\times n_d$ matrix of partial derivatives,
\begin{equation}\label{eq:fY}
f_Y \coloneqq \biggl[  \frac{\partial f}{\partial y_{ij}} \biggr]_{i,j=1}^{n,n_d}.
\end{equation}
Write $Y = [Y_1,\dots, Y_d]$ where $Y_i\in \mathbb{R}^{n\times (n_i-n_{i-1})}$ and $f_Y = [f_{Y_1},\dots, f_{Y_d}]$ where $f_{Y_i}$ is the $n\times (n_i-n_{i-1})$ submatrix, $i=1,\dots, d$. Then its Riemannian gradient at $\lb Y \rb \in \Flag(n_1,\dots,n_d;n) $ is given by $\nabla f(\lb Y \rb) = [\Delta_1, \dots, \Delta_d]$ where
\begin{equation}\label{eq:gradient variable delta}
\Delta_i = f_{Y_i} - \Bigl(Y_i Y_i^\tp  f_{Y_i} + \sum\nolimits_{j\ne i } Y_j f_{Y_j}^\tp  Y_i\Bigr), \quad i=1,\dots,d.
\end{equation}
\end{proposition}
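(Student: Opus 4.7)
The plan is to pull $f$ back to the total space $\O(n)$ of the Riemannian submersion $\pi:\O(n)\to\Flag(n_1,\dots,n_d;n)$, compute the gradient there using the bi-invariant metric, and then descend via the horizontal lift. Concretely, I would define $\tilde f:\O(n)\to\mathbb{R}$ by $\tilde f(Q)=f(\lb QI_{n,n_d}\rb)$ and write $Q=[Y,Y^\perp]$ with $Y\in\V(n_d,n)$. By the Stiefel equivalence~\eqref{eq:rep}, $\tilde f$ is right-invariant under $H=\O(n_1)\times\cdots\times\O(n-n_d)$; differentiating this invariance at the identity forces $Y_i^\tp f_{Y_i}$ to be symmetric for each $i$, a fact that I will use below.

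A direct computation of the directional derivative yields $d\tilde f_Q(QB)=\tr(f_Y^\tp QBI_{n,n_d})$ for every $B\in\mathfrak{so}(n)$. Setting this equal to the bi-invariant inner product $\tfrac12\tr(A^\tp B)$ and solving for skew $A$ gives $\nabla\tilde f(Q)=QA$ with $A=Q^\tp[f_Y,0]-[f_Y,0]^\tp Q\in\mathfrak{so}(n)$. The block-diagonal entries of $A$, with respect to the partition $(n_1,n_2-n_1,\dots,n_d-n_{d-1},n-n_d)$, are $Y_i^\tp f_{Y_i}-f_{Y_i}^\tp Y_i$, which vanish precisely because of the symmetry just noted; hence $A\in\mathfrak{m}$. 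Since the horizontal space at $Q$ for $\pi$ is $Q\mathfrak{m}$ by Proposition~\ref{prop:reductive complement}, $\nabla\tilde f(Q)$ is already horizontal, and the Riemannian submersion principle identifies it with the horizontal lift of $\nabla f(\lb Y\rb)$.

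Translating to Stiefel coordinates amounts to taking the first $n_d$ columns, i.e., right-multiplying by $I_{n,n_d}$. Using $QQ^\tp=I$ and $QI_{n,n_d}=Y$, one finds $QAI_{n,n_d}=f_Y-Yf_Y^\tp Y$, whose $i$-th block column is $f_{Y_i}-\sum_j Y_j f_{Y_j}^\tp Y_i$; rewriting the $j=i$ term as $Y_iY_i^\tp f_{Y_i}$ via the symmetry of $Y_i^\tp f_{Y_i}$ yields exactly $\Delta_i$. As a sanity check, one can verify directly that $\Delta=[\Delta_1,\dots,\Delta_d]$ satisfies the implicit tangent-space conditions $Y_i^\tp\Delta_i=0$ and $Y_i^\tp\Delta_j+\Delta_i^\tp Y_j=0$ from Corollary~\ref{prop:tangent2b} using only $Y_i^\tp Y_j=\delta_{ij}I$. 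The main obstacle is purely notational: carefully tracking the block structure of $\mathfrak{m}$ and its image under $Q(\cdot)I_{n,n_d}$ when switching between orthogonal and Stiefel coordinates.
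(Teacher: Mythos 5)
Your proof is correct, but it takes a different route from the paper's. The paper works directly in Stiefel coordinates: it characterizes $\nabla f(\lb Y\rb)$ as the component of $f_Y$ tangent to the flag manifold, writes down the candidate normal part $Z_i = Y_iY_i^\tp f_{Y_i} + \sum_{j\ne i}Y_jf_{Y_j}^\tp Y_i$, and verifies via \eqref{eq:tangent space} that $f_Y - Z$ is tangent and $Z$ is orthogonal to the tangent space. You instead lift $f$ to $\tilde f$ on $\O(n)$, compute $\nabla\tilde f(Q) = QA$ with $A = Q^\tp[f_Y,0] - [f_Y,0]^\tp Q$ against the bi-invariant metric, use the $H$-invariance of $\tilde f$ (equivalently, symmetry of $Y_i^\tp f_{Y_i}$) to kill the block-diagonal part so that $A\in\mathfrak{m}$, and then push down through the Riemannian submersion; your computation $QAI_{n,n_d} = f_Y - Yf_Y^\tp Y$ and the block-wise rewriting via $f_{Y_i}^\tp Y_i = Y_i^\tp f_{Y_i}$ recover exactly \eqref{eq:gradient variable delta}, and I checked these steps --- they are sound. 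What your route buys is that the defining identity $g_{\lb Y\rb}(\nabla f, X) = df(X)$ is verified against the metric \eqref{eq:g2}/\eqref{eq:metric1} through the skew matrices $A, B\in\mathfrak{m}$, never through the Frobenius inner product of the $n\times n_d$ Stiefel representatives; this matters because for $d\ge 2$ that Frobenius pairing does \emph{not} coincide with \eqref{eq:g2} (the blocks $B_{ij}$ with $i<j\le d$ are counted twice), so the phrase ``projection of $f_Y$ onto the tangent space'' in the paper's argument requires care about which inner product is meant, a subtlety your submersion argument and explicit horizontality check sidestep. What the paper's route buys is brevity and no appeal to submersion theory: once the candidate $Z$ is guessed, everything is an elementary verification with \eqref{eq:tangent space}. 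Two small points: the horizontal space being $Q\mathfrak{m}$ follows from $\mathfrak{m}$ being the orthogonal complement of $\mathfrak{h}$ in $\mathfrak{so}(n)$ under $\tfrac12\tr(X^\tp Y)$ (clear from \eqref{eq:h} and \eqref{eq:m}) together with the fact that the metric of Proposition~\ref{prop:metric} is precisely the submersion metric, rather than from Proposition~\ref{prop:reductive complement} alone, which only gives $\Ad_H$-invariance; and your invariance argument for the symmetry of $Y_i^\tp f_{Y_i}$ should be stated as differentiating $f(Y_1,\dots,Y_ie^{tA_i},\dots,Y_d)$ in skew directions $A_i$, which is what you intend and is fine.
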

\begin{proof}
For any $X\in T_{\lb Y \rb} \Flag(n_1,\dots,n_d;n)$, let $X_a\in \mathbb{R}^{n\times n}$ be the unique skew-symmetric matrix such that $X = Q X_a I_{n,n_d}$, where $Q\in \O(n)$ is such that $Y = QI_{n,n_d}$. Since the metric expressed in Stiefel coordinates \eqref{eq:g2} and expressed in orthogonal coordinates \eqref{eq:metric1} must be equal,
\begin{equation}\label{eq:tmp1}
g_{\lb Y \rb} \bigl( \nabla f (\lb Y \rb),X \bigr)  = g_{\lb Q \rb} \bigl(\nabla f  (\lb Y \rb)_a, X_a\bigr) = \frac{1}{2} \tr ( \nabla f  (\lb Y \rb)_a^\tp X_a).
\end{equation}
By definition of Riemannian gradient \eqref{eq:gH}, we also have
\begin{equation}\label{eq:tmp2}
g_{\lb Y \rb}(\nabla f(\lb Y\rb), X) =  \tr (f_Y^\tp  X_a).
\end{equation}
Comparing \eqref{eq:tmp1} and \eqref{eq:tmp2}, we see that $\nabla f(\lb Y\rb) $ is the projection of $f_Y$ onto $T_{\lb Y \rb} \Flag(n_1,\dots,n_d;n)$, i.e., $f_Y = \nabla f(\lb Y\rb)  + Z$ for some $Z$ orthogonal to $T_{\lb Y \rb} \Flag(n_1,\dots,n_d;n)$. We may take $Z = [Z_1,\dots, Z_d]$ to be
\[
Z_i \coloneqq Y_i Y_i^\tp  f_{Y_i} + \sum\nolimits_{j\ne i } Y_j f_{Y_j}^\tp  Y_i,\quad i=1,\dots,d,
\]
and verify that because of \eqref{eq:tangent space}, we indeed have $f_Y - Z\in T_{\lb Y \rb} \Flag(n_1,\dots,n_d;n)$ and thus $Z$ is orthogonal to $T_{\lb Y \rb} \Flag(n_1,\dots,n_d;n)$.
\end{proof}
The Riemannian gradient $\nabla f$ may also be derived by solving an optimization problem as in \cite{NSP2006}. Note that if $d = 1$, \eqref{eq:gradient variable delta} becomes $\nabla f (\lb Y \rb) = \Delta = f_Y -  Y Y^\tp  f_Y$, the well-known expression for Riemannian gradient of Grassmannian in \cite{EAS}.

\begin{proposition}[Riemannian Hessian]\label{prop:Hessian}
Let $f:\Flag(n_1,\dots,n_d;n) \to \mathbb{R}$ be a smooth function expressed in Stiefel coordinates $Y \in \V(n_d, n)$ and let $f_Y$ be  as in \eqref{eq:fY}. Then its Riemannian Hessian $\nabla^2f(\lb Y \rb)$ at $\lb Y \rb \in \Flag(n_1,\dots,n_d;n) $ is the symmetric bilinear form given by
\begin{equation}\label{eq:Hessian}
\nabla^2f(\lb Y \rb) (X,X') = f_{Y,Y} ( X, X') - \frac{1}{2} \bigl[ \tr(f_Y^\tp  Q B^\tp  Q^\tp  X') +  \tr(f_Y^\tp  Q C^\tp  Q^\tp  X)  \bigr) \bigr],
\end{equation}
for $X,X'\in T_{\lb Y \rb} \Flag(n_1,\dots,n_d;n)$, where
\begin{equation}\label{eq:fYY}
f_{Y,Y} (X,X') \coloneqq \sum_{i,k=1}^n \sum_{j,l=1}^{n_d} \frac{\partial^2 f}{\partial y_{ij} \partial y_{kl}} x_{ij} x'_{kl},
\end{equation}
$Q\in \O(n)$ is such that $Q I_{n,n_d} = Y$, and $B,C \in \mathbb{R}^{n \times n}$ are the unique skew-symmetric matrices such that $X = Q B I_{n,n_d}$, $X' = Q C I_{n,n_d}$ respectively.
\end{proposition}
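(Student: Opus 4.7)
The plan is to invoke the pointwise characterization of the Hessian in \eqref{eq:gH}, which expresses $\nabla^2 f(\lb Y \rb)(X,X)$ as the second time derivative of $f$ along the geodesic emanating from $\lb Y \rb$ in direction $X$, and then recover the general bilinear form via the polarization identity \eqref{eq:polar}. The role of the skew-symmetric matrices $B$ and $C$ and the orthogonal matrix $Q = [Y, Y^\perp]$ is simply to write the geodesic in closed form using the Stiefel-coordinate expression of Proposition~\ref{prop:geodesic3a}; under the convention that $B, C \in \mathfrak{m}$ these matrices are indeed unique.

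First I would handle the diagonal case $X' = X$, $C = B$. By Proposition~\ref{prop:geodesic3a}, the geodesic through $\lb Y \rb$ in direction $X = Q B I_{n,n_d}$ is $Y(t) = Q \exp(tB) I_{n,n_d}$. Taylor expanding $\exp(tB) = I + tB + \tfrac{t^2}{2} B^2 + O(t^3)$ yields $Y(0) = Y$, $Y'(0) = X$, and $Y''(0) = Q B^2 I_{n,n_d}$. Viewing $f$ as a smooth function on an ambient neighborhood of $Y$ in $\mathbb{R}^{n \times n_d}$ and applying the chain rule,
\[
\frac{d^2}{dt^2} f(Y(t)) \Big|_{t=0} = f_{Y,Y}(X, X) + \tr\bigl(f_Y^\tp \, Y''(0)\bigr).
\]
Using $Y''(0) = QB^2 I_{n,n_d} = (QBQ^\tp)(QBI_{n,n_d}) = (QBQ^\tp)\,X$ together with the skew-symmetry $B^\tp = -B$, the second term becomes $-\tr(f_Y^\tp \, Q B^\tp Q^\tp X)$, which agrees with the right-hand side of \eqref{eq:Hessian} when $X' = X$.

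Next I would invoke polarization \eqref{eq:polar}. Because the map $X \mapsto B$ from $T_{\lb Y \rb}\Flag$ to $\mathfrak{m}$ is linear, the skew-symmetric matrix associated with $X + X'$ is $B + C$. Expanding $\nabla^2 f(X+X', X+X')$ via the diagonal formula and using bilinearity of $f_{Y,Y}$ and of the trace,
\[
\tr\bigl(f_Y^\tp Q(B+C)^\tp Q^\tp (X + X')\bigr) = \tr(f_Y^\tp Q B^\tp Q^\tp X) + \tr(f_Y^\tp Q C^\tp Q^\tp X') + \tr(f_Y^\tp Q B^\tp Q^\tp X') + \tr(f_Y^\tp Q C^\tp Q^\tp X).
\]
Subtracting the two diagonal contributions and dividing by two, the pure terms $B^\tp X$ and $C^\tp X'$ drop out, leaving precisely the symmetric mixed term in \eqref{eq:Hessian}; the $f_{Y,Y}$ piece reduces to $f_{Y,Y}(X, X')$ by symmetry of partials.

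The main obstacle I anticipate is bookkeeping in the polarization step: one must check carefully that the four cross terms in the expansion of $\tr(f_Y^\tp Q(B+C)^\tp Q^\tp (X+X'))$ split correctly, with the diagonal pieces exactly cancelling and only the off-diagonal pieces surviving. A secondary, essentially notational, point is to verify that the formula is independent of any extension of $f$ off $\V(n_d, n)$ used to define $f_Y$ and $f_{Y,Y}$; this follows because the ambiguity lies in directions orthogonal to $T_{\lb Y \rb}\Flag$ and the extra trace term in \eqref{eq:Hessian} is precisely the normal-bundle correction that enforces tangency.
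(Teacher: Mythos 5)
Your proposal is correct and follows essentially the same route as the paper's proof: compute $\frac{d^2}{dt^2}f(\gamma(t))\big|_{t=0}$ along the geodesic $\lb Q\exp(tB)I_{n,n_d}\rb$ of Proposition~\ref{prop:geodesic3a} via the chain rule, rewrite $\gamma''(0)=QB^2I_{n,n_d}$ using skew-symmetry to get the $-\tr(f_Y^\tp QB^\tp Q^\tp X)$ term, and recover the bilinear form from \eqref{eq:gH} and polarization \eqref{eq:polar}. You merely spell out the polarization bookkeeping that the paper leaves implicit, and your expanded cross terms do cancel as claimed.
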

\begin{proof}
By Proposition~\ref{prop:geodesic3a}, a geodesic $\gamma$ with $\gamma'(0) = X$ and $\gamma(0) = \lb Y \rb$ takes the form $\gamma(t) = \lb Q\exp(tB) I_{n,n_d} \rb$  where $Q \in \O(n)$ is such that $Y = QI_{n,n_d}$ and $X = Q B I_{n,n_d}$. Applying chain rule,
\[
\frac{d}{dt} f(\gamma(t)) = \tr\bigl( f_Y^\tp  \gamma'(t)\bigr),\qquad
\frac{d^2 }{dt^2} f(\gamma(t))  = \tr\bigl(\gamma'(t)^\tp   f^\tp _{\gamma(t),\gamma(t)} \gamma'(t)\bigr) + \tr\bigl(f_Y^\tp \gamma''(t)\bigr);
\]
followed by evaluating at $t = 0$ gives
\[
\nabla^2f(\lb Y \rb)(X,X) =  \frac{d^2 }{dt^2} f(\gamma(t)) \Big|_{t=0} = f_{Y,Y} (X, X)  - \tr (f_Y^\tp  Q B^\tp  Q ^\tp  X).
\]
The required expression \eqref{eq:Hessian} then follows from \eqref{eq:gH} and \eqref{eq:polar}.
\end{proof}
If $d = 1$, \eqref{eq:Hessian} reduces to the well-known expression for Riemannian Hessian of the Grassmannian \cite[Section~2.5.4]{EAS} since
\begin{multline*}
\nabla^2f(\lb Y \rb) (X,X') = f_{Y,Y} (X, X') - \tr (f_Y^\tp   Q B^\tp  Q^\tp X') \\
 = f_{Y,Y} (X, X') - \tr (f_Y^\tp Y (X')^\tp X)  = f_{Y,Y} (X, X') - \tr (X^\tp X' Y^\tp f_Y).
\end{multline*}

There is slight inconsistency in our definitions of $f_Y$ and $f_{YY}$ to make these expressions easily portable into computer codes. To be consistent with \eqref{eq:fYY}, we could define $f_Y$ as a linear form:
\[
f_Y(X) = \sum_{i,j=1}^{n,n_d} \frac{\partial f}{\partial y_{ij}}  x_{ij}
\]
for $X \in  T_{\lb Y \rb} \Flag(n_1,\dots,n_d;n)$. Alternatively, to be consistent with \eqref{eq:fY}, we could define $f_{YY}$ to be a hypermatrix of partials (this is not a $4$-tensor, just a convenient way to represent a $2$-tensor):
\[
f_{YY} =\biggl[ \frac{\partial^2 f}{\partial y_{ij} \partial y_{kl}} \biggr]_{i,j,k,l=1}^{n,n_d, n, n_d}.
\]

\section{Optimization algorithms over flag manifolds}\label{sec:optim}

With analytic expressions for points, tangent vectors, metric, geodesic, parallel transport, Riemannian gradient and Hessian in place, Riemannian manifold optimization algorithms are straightforward to derive from the usual ones. For example, for steepest descent, instead of adding a negative multiple of the gradient to the current iterate, we move the current iterate along the geodesic with initial velocity vector given by the negative of the gradient. Again, we may do this in any of the four coordinates system we have introduced although for the same reason in Section~\ref{sec:gH}, we prefer the Stiefel coordinates. Thus here we will assume the identification \eqref{eq:identify} as before.

\subsection{Steepest descent over a flag manifold}

We describe this in Algorithm~\ref{alg:sd}. A point $\lb Y \rb \in \Flag(n_1,\dots,n_d;n)$ is represented in Stiefel coordinates, i.e., as an orthonormal matrix $Y \in \mathbb{R}^{n \times n_d}$, $Y^\tp Y=I$. As usual, $Y^{\perp} \in \mathbb{R}^{n \times (n-n_d)}$ is such that $X \coloneqq [ Y, Y^{\perp}] \in \O(n)$. The Riemannian gradient $\nabla f \in \mathbb{R}^{n \times n_d}$ is given by Proposition~\ref{prop:gradient} and we set $G = - \nabla f$ to be the search direction. 
The exponential map direction $B\in \mathbb{R}^{n\times n}$ is uniquely obtained from $B I_{n,n_d} = [Y,Y^{\perp}]^\tp G$, i.e., $B$ is the unique skew-symmetric matrix whose first $n_d$ columns is  $[Y,Y^{\perp}]^\tp G$. The next iterate is then found along the geodesic determined by the current iterate and the direction as in Proposition~\ref{prop:geodesic3a}, although the exact line search may be substituted by any reasonable strategy for choosing step size. Note in particular that Algorithm~\ref{alg:sd} does not involve parallel transport.

\begin{algorithm}
  \caption{Steepest descent in Stiefel coordinates}
  \label{alg:sd}
  \begin{algorithmic}[1]
    \Require $\lb Y_0 \rb \in \Flag(n_1,\dots,n_d;n)$ with $Y_0 \in \mathbb{R}^{n\times n_d}$ and $Y_0^\tp Y_0=I$;
    \State find $Y_0^{\perp} \in \mathbb{R}^{n \times (n-n_d)}$ such that $[Y_0,Y_0^\perp ] \in \O(n)$;
    \State set $X_{0} = [Y_0, Y_0^\perp ]$;
    \For {$i =0, 1, \dots$}
    \State set $G_{i} = -\nabla f(\lb Y_i \rb)$; \Comment{gradient at $\lb Y_{i} \rb$ as in \eqref{eq:gradient variable delta}}
    \State set $X_i = [Y_i, Y_i^\perp ]$;
    \State compute $\widehat{B} = X_{i}^\tp  G_{i}$;
    \State set $B \in \mathbb{R}^{n \times n}$ as $B_{ij} = \widehat{B}_{ij}$ for $j \leq n_d$;
    \State \phantom{set $B \in \mathbb{R}^{n \times n}$ as} $B_{ij} = -\widehat{B}_{ji}$ for $j \geq n_d$ and $i \leq n_d$;
    \State \phantom{set $B \in \mathbb{R}^{n \times n}$ as} $B_{ij} = 0$ otherwise;
    \State minimize $ f\bigl(X_{i} \exp(tB) I_{n,n_d}\bigr)$ over $t \in \mathbb{R}$; \Comment{ $t_{\min}$ from exact line search}
    \State set $X_{i+1} = X_{i} \exp(t_{\min}B)$;
    \EndFor       
    \Ensure $\lb Y_{\operatorname{opt}} \rb = \lb X_{\operatorname{opt}} I_{n,n_d} \rb$
  \end{algorithmic}
\end{algorithm}

\subsection{Conjugate gradient over a flag manifold}\label{sec:cg}

We present the conjugate gradient method in Algorithm~\ref{alg:cg}. Unlike steepest descent, conjugate gradient requires that we construct our new descent direction from the $(k-1)$th and
$k$th iterates, i.e., one needs to compare tangent vectors at two different points on the manifold and the only way to do this is to parallel transport the two tangent vectors to the same point. There is no avoiding parallel transport in conjugate gradient.

As the expression for parallel transport in \eqref{eq:pt2} indicates, we will need to compute
\[
e^{-\varphi_{tB}}(X) = \sum_{k=1}^{\infty} \frac{(-1)^k}{k!} \varphi_{tB}^k(X),\quad \varphi_{tB}(X) = \frac{t}{2}[B,X]_{\mathfrak{m}}.
\]
The rapid decay of the exponential series allows us to to replace it by a finite sum, reducing the task to recursively computing the iterated brackets and projection onto $\mathfrak{m}$:
\[
\varphi_{tB}^k(X) =\varphi_{tB} \circ \cdots \circ \varphi_{tB} (X)
= \Bigl(\frac{t}{2}\Bigr)^k [B, [ B,\dots,[B,X]_{\mathfrak{m}} \dots ]_{\mathfrak{m}}]_{\mathfrak{m}}.
\]
As we had pointed out at the end of Section~\ref{sec:ortho}, this step is unnecessary for the Grassmannian as $[B,X]_{\mathfrak{m}} = 0$ if $d= 1$, i.e., for $\Flag(k;n) = \Gr(k,n)$. A careful treatment of the computation of $e^{-\varphi_{tB}}(X) $ requires more details than we could go into here and is deferred to \cite{next}.

\begin{algorithm}
  \caption{Conjugate gradient in Stiefel coordinates}
  \label{alg:cg}
  \begin{algorithmic}[1]
    \Require $\lb Y_0 \rb \in \Flag(n_1,\dots,n_d;n)$ with
$Y_0\in \mathbb{R}^{n\times n_d}$ and $Y_0^\tp Y_0=I$;
    \State find $Y_0^\perp \in \mathbb{R}^{n \times (n-n_d)}$ such that $[ Y_0,Y_0^\perp] \in \O(n)$;
    \State set $X_{0} = [Y_0 ,  Y_0^\perp]$;
    \State set $G_{0} = -\nabla f(\lb Y_0 \rb)$ and $H_0 = -G_0$; \Comment{gradient at $\lb Y_{0} \rb$ as in \eqref{eq:gradient variable delta}}
    \For {$i =0,1, \dots$}
    \State compute $\widehat{B} = X_{i}^\tp  H_{i};$
    \State set $B \in \mathbb{R}^{n \times n}$ as $B_{ij} = \widehat{B}_{ij}$ for $j \leq n_d$;
    \State\phantom{set $B \in \mathbb{R}^{n \times n}$ as} $B_{ij} = -\widehat{B}_{ji}$ for $j \geq n_d$ and $i \leq n_d$;
    \State\phantom{set $B \in \mathbb{R}^{n \times n}$ as}  $B_{ij} = 0$ otherwise;
    \State minimize $ f\bigl(X_{i} \exp(tB) I_{n,n_d}\bigr)$ over $t \in \mathbb{R}$; \Comment{ $t_{\min}$ from exact line search}
    \State set $X_{i+1} = X_i \exp(t_{\min} B)$;
     \State set $Y_{i+1} = X_{i+1} I_{n,n_d}$;
    \State set $Y_{i+1}^\perp = X_{i+1} I_{n,n-n_d}$;
    \State set $G_{i+1} = -\nabla f(\lb Y_{i+1} \rb)$;
     \Comment{gradient at $\lb Y_{i+1} \rb$ as in \eqref{eq:gradient variable delta}}
      \State find $\widetilde{G}_{i+1}\in \mathbb{R}^{n \times (n-n_d)}$ such that $\widehat{G}_{i+1} = [G_{i+1},\widetilde{G}_{i+1}]$ is skew-symmetric;
    \Procedure{Descent}{$\lb Y_i \rb, \lb Y_{i+1} \rb,G_i,H_i$} \Comment{new descent direction at $\lb Y_{i+1} \rb$}
    \State $\tau H_{i} =  X_{i}  \exp(t_{\min}B) e^{-\varphi_{t_{\min}B}} ( B) I_{n,n_d}$; \Comment{parallel transport of $H_i$ as in \eqref{eq:pt2}}
    \State $\tau G_{i} = X_{i} \exp(t_{\min}B) e^{-\varphi_{t_{\min}B}} (\widehat{G}_{i} ) I_{n,n_d}$; \Comment{parallel transport of $G_i$ as in \eqref{eq:pt2}}
    \State $\gamma_i = g_{\lb Y_{i+1} \rb} (G_{i+1}-\tau G_i, G_{i+1})/g_{\lb Y_i \rb} (G_i, G_i)$;
    \Comment{$g$ as in \eqref{eq:g2}}
    \State $H_{i+1} = -G_{i+1} + \gamma_{i} \tau H_i$;
    \EndProcedure
    \State reset $H_{i+1} = -G_{i+1}$ if $i+1 \equiv 0 \bmod (k+1)(n-k)$;
    \EndFor
   \Ensure  $\lb Y_{\operatorname{opt}} \rb = \lb X_{\operatorname{opt}} I_{n,n_d} \rb$
  \end{algorithmic}
\end{algorithm}

\subsection{Newton and other algorithms over a flag manifold}

The closed-form analytic expressions derived in this article permit one to readily extend other optimization algorithms on Euclidean spaces to flag manifolds. For example, the \emph{Newton search direction} is given by the  tangent vector $X \in T_{\lb Y \rb} \Flag(n_1,\dots,n_d;n)$ such that 
\[
\nabla^2f(\lb Y \rb) (X,X') = g_{\lb Y \rb}\bigl(-\nabla f(\lb Y \rb),X' \bigr) = -\tr (f_Y^\tp  X'),
\]
for every $X'\in T_{\lb Y \rb} \Flag(n_1,\dots,n_d;n)$, which gives us a system of linear equations upon plugging in the expression for Riemannian Hessian in \eqref{eq:Hessian}. Using the Newton search direction for $G_i$ in Algorithm~\ref{alg:sd} then gives us Newton method on the flag manifold. In a similar vein,  one may derive other standard algorithms for unconstrained optimization, e.g., quasi-Newton method, accelerated gradient descent, stochastic gradient descent, trust region methods, etc, for the flag manifold. Nevertheless, given that the goal of our article is to develop foundational material, we will leave these  to  future work \cite{next}.

\section{Numerical experiments}\label{sec:num}

We will test our algorithm for steepest descent on the flag manifold numerically. As we explained in Section~\ref{sec:cg}, the experiments for conjugate gradient algorithm is more involved and is deferred to \cite{next}. We run our numerical experiments on two problems: (i) the  principal flag problem in Section~\ref{sec:dis} is one for which the solution may be determined in closed-form analytically, and thus it serves to  demonstrate the correctness of our algorithm, i.e., converges to the true solution; (ii) a variation of the previous problem with a more complicated objective function to show that the convergence behavior remains unchanged. In addition, neither problem can be solved by simply treating them as nonlinear optimization problems with equality constraints and applying standard nonlinear optimization algorithms.

In the following we will assume the identification in \eqref{eq:identify} and  use Stiefel coordinates throughout.

\subsection{Principal flags}\label{sec:dis}

Let $M \in \mathbb{R}^{n \times n}$ be symmetric.
We seek the solution to
\begin{equation}\label{eq:dis}
\begin{tabular}{rl}
maximize & $ \tr(Y^\tp MY)$ \\[1ex]
subject to & $\lb Y \rb \in \Flag(n_1,\dots,n_d; n)$.
\end{tabular}
\end{equation}
Here $Y \in \mathbb{R}^{n \times n_d}$, $Y^\tp Y = 1$, and the objective function is well-defined as a function on the flag manifold: If we have $Y$ and $Y'$ with $\lb Y \rb = \lb Y'\rb$, then they must be related as in \eqref{eq:rep} and thus $ \tr(Y^\tp MY) =  \tr(Y^{\prime\tp} MY')$.

As we saw in Example~\ref{eg:pca}, when $M$ is a sample covariance matrix, the solution to \eqref{eq:dis} is equivalent to PCA when we seek a complete flag, i.e., $d=n-1$ and $n_i =i$, $i =1,\dots,n-1$. An advantage proffered by this approach is that if we do not know the intrinsic dimension of the data set a priori, then finding the flag as opposed to any particular subspace gives us the entire profile, showing how increasing dimension accounts for an increasingly amount of variance. The problem in \eqref{eq:dis} is thus a generalization of PCA, allowing us to seek any flag, not necessarily a complete one. It may also be interpreted as finding subspaces of dimensions $n_1,n_2-n_1,\dots,n_d-n_{d-1}$ that are independent and explain different levels of variance in the data set.

Figure~\ref{fig:convergence} shows the convergence trajectories of steepest descent, i.e., Algorithm~\ref{alg:sd}, on the flag manifold $\Flag(3,7,12; 60)$, a $623$-dimensional manifold. The symmetric matrix $M \in \mathbb{R}^{60 \times 60}$ is generated randomly with standard normal entries. Since the true solution of \eqref{eq:dis} may be determined in closed form --- it is the sum of the $k$ largest eigenvalues of $M$ --- we may therefore conclude that Algorithm~\ref{alg:sd}  converges to the true solution in around $80$ iterations. Indeed the function values stabilize after as few as $10$ iterations. At least for this problem, we see that the vanishing of the Riemannian gradient serves as a viable stopping condition. In our implementation, our stopping conditions are determined by (i) Frobenius norm of Riemannian gradient, (ii) distance between successive iterates, and (iii) number of iterations.

\begin{figure}[h]
  \centering
    \includegraphics[trim=6ex 11ex 6ex 11ex, clip, scale=0.6]{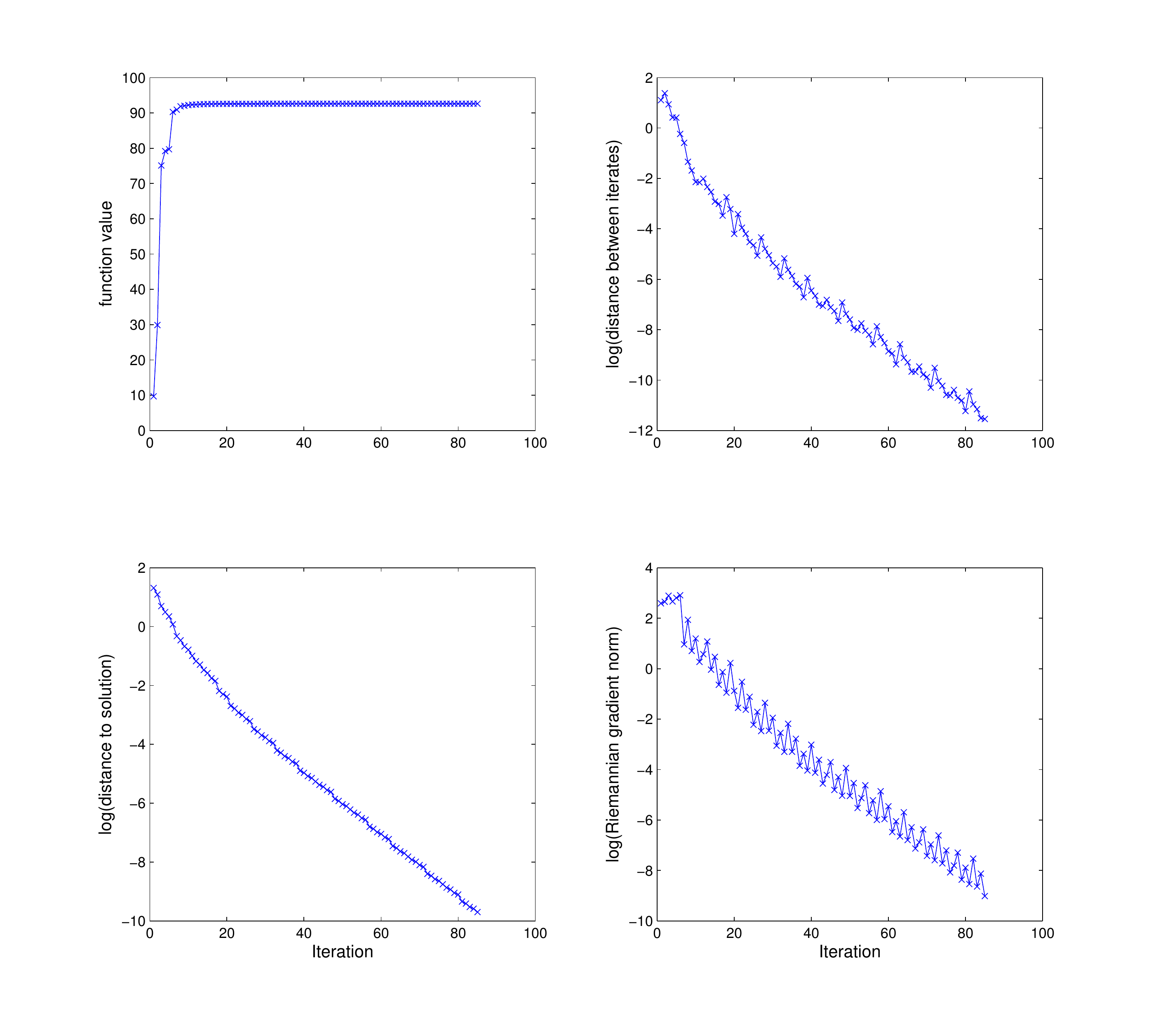}
\vspace*{-1ex}
      \caption{Convergence trajectories  for \eqref{eq:dis} on $\Flag(3,7,12; 60)$.}
    \label{fig:convergence}
\end{figure}

We perform extensive experiments beyond that in Figure~\ref{fig:convergence} by taking average of $100$ instances of the problem \eqref{eq:dis} for various values of $n_1,\dots,n_d$. We tabulate our results showing accuracy and speed in Tables~\ref{table:distdis1}--\ref{table:timedis2}. Tables~\ref{table:distdis1} and \ref{table:distdis2} show that Algorithm~\ref{alg:sd} is robust across all dimensions of flags and  ambient spaces that we tested. Tables~\ref{table:timedis1} and \ref{table:timedis2} show that elapsed time taken for Algorithm~\ref{alg:sd} increases roughly linearly with the dimension of the flag manifold.

\begin{table}[ht]
\renewcommand{\arraystretch}{1.3}
	\centering
	\begin{tabular}{r|rrrrrrrrrr}
		$k$ & 30 & 40 & 50 & 60 & 70 & 80 & 90 & 100\\
		\hline\hline
		Accuracy ($\times 10^{-4}$) & $2$ & $8$ & $64$ & $32$ & $4$ & $87$ & $20$ & $15$ \\ 
	\end{tabular} 
\vspace{1ex}
	\caption{Distance to true solution for \eqref{eq:dis} on  $\Flag(3,9,21;k)$.}
	\label{table:distdis1}
\vspace{-4ex}
\end{table}

\begin{table}[ht]
\renewcommand{\arraystretch}{1.3}
	\centering
	\begin{tabular}{r|rrrrrrrrrr}
		$k$ & 30 & 40 & 50 & 60 & 70 & 80 & 90 & 100\\
		\hline\hline
		Elapsed Time & $0.38$ & $0.40$ & $0.67$ & $0.93$ & $1.71$ & $2.27$ & $3.08$ & $4.07$ \\ 
	\end{tabular} 
\vspace{1ex}
	\caption{Elapsed time for \eqref{eq:dis} on $\Flag(3,9,21;k)$.}
	\label{table:timedis1}
\vspace{-4ex}
\end{table}

\begin{table}[ht]
\renewcommand{\arraystretch}{1.3}
	\centering
	\begin{tabular}{r|rrrrrrrrrrrr}
		$k$ & 1 & 2 & 3 & 4 & 5 & 6 & 7 & 8 & 9 & 10\\
		\hline\hline
		Accuracy ($\times 10^{-4}$) & $1.4$ & $3.4$ & $3.4$ & $8.6$ & $2.8$ & $18$ & $19$ & $5.1$ & $9.3$ & $11$\\ 
	\end{tabular} 
\vspace{1ex}
	\caption{Distance to true solution for \eqref{eq:dis} on $\Flag(2,\dots,2k;60)$.}
	\label{table:distdis2}
\vspace{-4ex}
\end{table}

\begin{table}[ht]
\renewcommand{\arraystretch}{1.3}
	\centering
	\begin{tabular}{r|rrrrrrrrrrrr}
		$k$ & 1 & 2 & 3 & 4 & 5 & 6 & 7 & 8 & 9 & 10\\
		\hline\hline
		Elapsed Time & $0.54$ & $0.81$ & $0.79$ & $0.96$ & $1.05$ & $0.91$ & $1.20$ & $1.06$ & $1.18$ & $1.12$\\ 
	\end{tabular} 
\vspace{1ex}
	\caption{Elapsed time for \eqref{eq:dis} on $\Flag(2,\dots,2k;60)$.}
	\label{table:timedis2}
\vspace{-4ex}
\end{table}

\begin{table}[ht]
\renewcommand{\arraystretch}{1.3}
	\centering
	\begin{tabular}{r|rrrrrrrrrrrr}
		$k$ & 1 & 2 & 3 & 4 & 5 & 6 & 7 & 8 & 9 & 10\\
		\hline\hline
		Accuracy ($\times 10^{-4}$) & $1.4$ & $3.4$ & $3.4$ & $8.6$ & $2.8$ & $18$ & $19$ & $5.1$ & $9.3$ & $11$\\ 
	\end{tabular} 
\vspace{1ex}
	\caption{Distance to true solution for \eqref{eq:dis} on $\Flag(2,\dots,2k;60)$.}
	\label{table:distdis3}
\vspace{-4ex}
\end{table}

\subsection{Nonlinear eigenflags}\label{sec:disv}

This is a variation of the principal flag problem \eqref{eq:dis}:
\begin{equation}\label{eq:eig}
\begin{tabular}{rl}
maximize & $\sum_{i=1}^{d}\tr(Y_i^\tp MY_i)^2$\\[1ex]
subject to & $\lb Y_1,\dots,Y_d \rb \in \Flag(n_1,\dots,n_d; n)$.
\end{tabular}
\end{equation}
Again $M \in \mathbb{R}^{n \times n}$ is a symmetric matrix and the flag is given in Stiefel coordinates $Y = [Y_1,\dots,Y_d] \in \mathbb{R}^{n \times n_d}$, $Y^\tp Y = I$, but partitioned into submatrices $Y_i \in \mathbb{R}^{n \times (n_i - n_{i-1})}$, $Y_i^\tp Y_i = I$, $i =1,\dots,d$. More generally, the objective function in  \eqref{eq:eig} may be replaced by $\sum_{i=1}^{d}f_i\bigl(\tr(Y_i^\tp M Y_i)\bigr)$ with $f_1,\dots,f_d \in C^2(\mathbb{R})$. Choosing $f_1(x) = \dots = f_d(x) = x$ gives us \eqref{eq:dis} and choosing $f_1(x) = \dots = f_d(x) = x^2$ gives us \eqref{eq:eig}. Note that it will take considerable effort to formulate a  problem like \eqref{eq:eig} as a constrained optimization problem in Euclidean space.
\begin{figure}[h]
  \centering
    \includegraphics[trim=6ex 11ex 6ex 10ex, clip, scale=0.6]{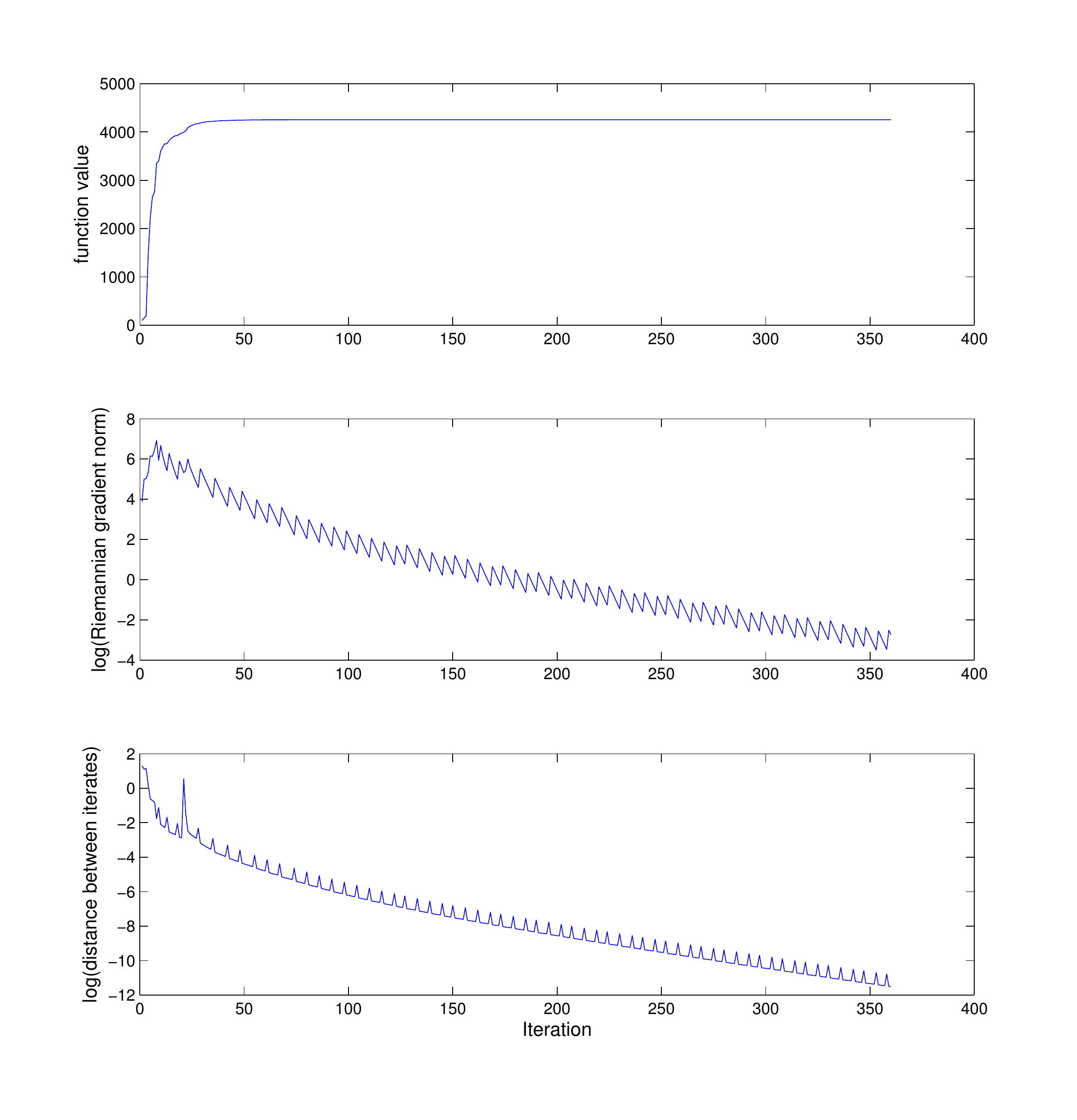}
      \caption{Convergence trajectories for \eqref{eq:eig} on $\Flag(3,7,12; 60)$.}
\vspace*{-2ex}
    \label{fig:convergence2}
\end{figure}

The convergence trajectories for Algorithm~\ref{alg:sd} applied to \eqref{eq:eig} are shown in Figure~\ref{fig:convergence2}. The nonlinearity imposes a cost --- it takes around $390$ iterations to satisfy one of the our stopping criteria, although the function values stabilize after around $60$ iterations. The jagged spikes seen in Figure~\ref{fig:convergence2} are a result of  iterates moving along a geodesic and then jumping to another geodesic. So this is indicative of steepest descent following a path that comprises multiple geodesics. A caveat is that unlike the principal flag problem \eqref{eq:dis}, we do not have a closed-form solution for \eqref{eq:eig} and thus we may only guarantee convergence to a local minimizer, which is reflected in Figure~\ref{fig:convergence2}.

\section{Conclusion}

For most of its history, continuous optimization has been concerned with optimizing functions over the Euclidean space $\mathbb{R}^n$; but this has begun to change with the advent of semidefinite programming \cite{sdp} and orthogonality-constrained optimization \cite{EAS}, where objective functions are naturally defined over the positive definite cone $\mathbb{S}^n_{\pp}$, the Stiefel manifold $\V(k,n)$, and the Grassmannian $\Gr(k,n)$. These developments have provided us with the capacity to optimize over not just vectors but also covariances matrices, orthonormal bases, and subspaces.  The work here extends such capabilities to flags, which capture nested structures  in multilevel, multiresolution, or multiscale phenomena.
In future works, we will investigate computational issues \cite{next} that have been deferred from this first study. We will also examine \emph{complex} flag manifolds, i.e., where the vector spaces involved are over $\mathbb{C}$. Its properties will be quite different --- for example, we saw in Proposition~\ref{prop:geometric structure} that a real flag manifold is both an affine and a projective variety; but the only complex algebraic varieties that are both projective and affine are finite sets. So complex flag manifolds will lack some of the properties discussed here, although it will have others, e.g., a symplectic structure.

\bibliographystyle{abbrv}

\end{document}